\documentclass[11pt]{amsart} 
\usepackage[english]{babel}
\usepackage{graphicx,epsfig}
\usepackage{bbm}
\usepackage{cite}
\usepackage{amsthm}
\usepackage{esvect}
\usepackage{amsmath,amssymb,latexsym, amsfonts, amscd, amsthm, xy}
\input{xy}
\xyoption{all}

\usepackage{mathrsfs}

\usepackage{mathtools}


\usepackage[margin=1in]{geometry}

\usepackage[dvipsnames]{xcolor}

\makeindex \setcounter{tocdepth}{1}



\usepackage{hyperref}
\hypersetup{pdftoolbar=true, pdftitle={GKform}, pdffitwindow=true, colorlinks=true, citecolor=blue, filecolor=black, linkcolor=purple, urlcolor=red, hypertexnames=false}

\theoremstyle{plain}

\newtheorem{theorem}{Theorem}[section]

\newtheorem{proposition}[theorem]{Proposition}
\newtheorem{corollary}[theorem]{Corollary}

\newtheorem{assumption}[theorem]{Assumption}
\newtheorem*{assumption*}{Assumption}
\newtheorem{lemma}[theorem]{Lemma}
\newtheorem{question}[theorem]{Question}

\theoremstyle{definition}
\newtheorem{definition}[theorem]{Definition}
\newtheorem{remark}[theorem]{Remark}

\newtheorem*{goal*}{Goal}
\newtheorem*{problem*}{Comment}

\newtheorem{notation}[theorem]{Notation}

\def\lra{{\; \longrightarrow \;}}

\def\G{{\bf G}}

\def\PP{{\mathbf P}}

\DeclareMathOperator{\NS}{NS}
\DeclareMathOperator{\Gm}{\mathbb{G}_m}
\DeclareMathOperator{\sm}{sm}

\DeclareMathOperator{\tors}{tors}

\DeclareMathOperator{\codim}{codim}

\DeclareMathOperator{\Res}{Res}

\DeclareMathOperator{\Pic}{Pic}

 \DeclareMathOperator{\pr}{pr}
\DeclareMathOperator{\id}{id}

\DeclareMathOperator{\Bl}{Bl}
 \DeclareMathOperator{\cl}{cl}

 \DeclareMathOperator{\Div}{Div}
\DeclareMathOperator{\Hom}{Hom}
\DeclareMathOperator{\Hombf}{{\bf Hom}}

\DeclareMathOperator{\spec}{Spec}

\DeclareMathOperator{\tf}{\mathrm{tf}}

\DeclareMathOperator{\rank}{rank}

\DeclareMathOperator{\tr}{t}

\DeclareMathOperator{\im}{Im}

\DeclareMathOperator{\Jac}{Jac}

\def\fp{\mathfrak{p}}

\def\d{\mathrm{d}}

\def\TT{\mathbf{T}}
\def\Z{\mathbb{Z}}

\def\F{\mathbb{F}}

\def\BB{\mathbf{B}}
\def\AA{\mathbf{A}}

\def\Q{\mathbb{Q}}

\def\J{\mathbf{J}}

\def\Jo{\mathbf{J}^{\vee,\circ}}

\def\G{\mathbb{G}}

\def\P{\mathbf{P}}
\def\CC{\mathbf{C}}

\def\bdf{\begin{defn}}
\def\edf{\end{defn}}

\def\ra{\rightarrow}

\def\d1{d^{(1)}}

\def\Gm{{\mathbb{G}_m}}

\def\d{\mathbf{d}}

\def\U{\mathbf{U}}
\def\Y{\mathbf{Y}}

\def\oh{\mathcal{O}}
\def\longhookrightarrow{\lhook\joinrel\longrightarrow}

\usepackage{tikz}
\usetikzlibrary{positioning} 
\usepackage{tikz-cd}
\usetikzlibrary{arrows,graphs,decorations.pathmorphing,decorations.markings}

\tikzset{
commutative diagrams/.cd,
arrow style=tikz,
diagrams={>=latex}}

\setcounter{tocdepth}{1}
\numberwithin{equation}{section}

\usepackage{microtype}
\begin{document} 

 \title{Geometric Quadratic Chabauty over Number Fields}
\author{Pavel \v{C}oupek, David T.-B. G. Lilienfeldt, Luciena X. Xiao, Zijian Yao}
\address[Pavel \v{C}oupek]{Department of Mathematics, Purdue University}
\email{pcoupek@purdue.edu}
 \address[David T.-B. G. Lilienfeldt]{Einstein Institute of Mathematics, Hebrew University of Jerusalem}
\email{davidterborchgram.lilienfeldt@mail.huji.ac.il}
 \address[Luciena X. Xiao]{Department of Mathematics and Statistics, University of Helsinki}
\email{xiao.xiao@helsinki.fi}
\address[Zijian Yao]{Department of Mathematics, Universit\'e Paris Saclay/CNRS}
\email{zijian.yao.math@gmail.com}
\subjclass[2020]{11G30, 11D45, 14G05}
\keywords{Rational points, geometric quadratic Chabauty, Poincar\'e torsor, biextension}

\dedicatory{Dedicated to the memory of Bas Edixhoven}

\begin{abstract}
This article generalizes the geometric quadratic Chabauty method, initiated over $\Q$ by Edixhoven and Lido, to curves defined over arbitrary number fields. The main result is a conditional bound on the number of rational points on curves that satisfy an additional Chabauty type condition on the Mordell--Weil rank of the Jacobian.
The method gives a more direct approach to the generalization by Dogra of the quadratic Chabauty method to arbitrary number fields. 
\end{abstract}

\maketitle

\tableofcontents

\section{Introduction}

\subsection{Main result}

Let $C$ be a smooth, projective, geometrically irreducible curve of genus $g \ge 2$ defined over a number field $K$. Let $J:= \Pic_{C/K}^0$ denote the Jacobian of $C$. Let 
\begin{itemize}
\item $r:=\rank_\Z J(K)$ be the Mordell--Weil rank, and 
\item $\rho := \textup{rank}_{\Z} \NS(J)$ be the rank of the N\'eron--Severi group of $J$. 
\end{itemize}
  Mordell's conjecture  (now a theorem of Faltings \cite{faltings}) asserts that the set of rational points on $C$ is finite. Faltings' spectacular proof, however, cannot be made effective and there is no general algorithm for determining the set $C(K)$\footnote{though recently Alp\"oge and Lawrence found an algorithm  that terminates assuming standard conjectures (see \cite[Ch. 7-9]{alpogethesis}).}. 
More recently, following the pioneering work of Chabauty \cite{chab41}, methods have been developed to explicitly determine the set of rational points on curves that satisfy certain conditions on $r$ (commonly referred to as Chabauty type conditions).  Some relevant results in this direction will be surveyed in the introduction. Let us fix a prime $p$ where $C$ has good reduction while satisfying some additional mild ramification conditions (see Assumption \ref{passumption}). In its crudest form, our main result is an effective bound on $C(K)$ of the following form, which generalizes the work of Edixhoven and Lido \cite{EL19}.

\begin{theorem}[crude version]\label{thm:intro_main}
Suppose that $C$ satisfies the ``quadratic Chabauty condition''
\begin{equation}\label{eq:geom_cc}
r+\delta(\rho-1) \leq (g+\rho-2)d,
\end{equation}
where $\delta=\rank_\Z \oh_K^\times$ and $d$ is the degree of $K$. Let $R:=\Z_p \langle z_1, ..., z_{r+\delta (\rho - 1)} \rangle$
 be the $p$-adically completed polynomial algebra over $\Z_p$. There exists an ideal $I$ of $R$, which is explicitly computable mod $p$, such that if $\overline A:= (R/I) \otimes_{\Z_p} \F_p$  is finite dimensional over $\F_p$, then the set of rational points $C(K)$ is finite, and its cardinality is bounded by $ \dim_{\F_p} \overline A.$
 \end{theorem}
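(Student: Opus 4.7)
The plan is to adapt the geometric quadratic Chabauty argument of Edixhoven--Lido \cite{EL19} from $\Q$ to an arbitrary number field $K$, the structural change being a systematic use of Weil restriction of scalars. After fixing an auxiliary base point $b\in C(K)$ (possibly after a finite extension that is later corrected for) and the resulting Abel--Jacobi embedding $j\colon C\hookrightarrow J$, I would work with a regular proper model $\mathcal{C}/\OK$ of $C$, the N\'eron model $\mathcal{J}/\OK$ of $J$, and their Weil restrictions $\mathcal{C}_\Z:=\Res_{\OK/\Z}\mathcal{C}$ and $\mathcal{J}_\Z:=\Res_{\OK/\Z}\mathcal{J}$, regarded as schemes over $\Z[1/S]$ for a finite bad set $S$ containing the primes above $p$.

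The next step is to build the ambient scheme $T$ carrying the biextension data. Fixing a basis of $\NS(J)/\Z[\Theta]$ of size $\rho-1$ (the $-1$ absorbing the theta polarisation tautologically present on $J$), the associated Poincar\'e biextensions, trivialised along the two axes, can be glued and summed to form a $\Gm^{\rho-1}$-torsor $T\to J$ together with a canonical lift $\tilde\jmath\colon C\to T$ of $j$. On Weil restrictions, $\Res_{\OK/\Z}T\to\mathcal{J}_\Z$ is then a torsor under $\Res_{\OK/\Z}\Gm^{\rho-1}$ of relative dimension $(g+\rho-1)d$, and the fundamental rank computation reads
\[
\rank_{\Z}\,T(\OK)_{/\tors}\;\leq\;r+\delta(\rho-1),
\]
with the $r$ coming from $J(K)$ and the $\delta(\rho-1)$ coming from $\oh_K^{\times}$ in each of the $\rho-1$ multiplicative fibre directions of $T/J$.

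The heart of the argument is rigid-analytic at $p$. Using the formal-group logarithms of $\mathcal{J}_{\Z_p}$ and of $\Gm$, whose convergence is ensured by Assumption~\ref{passumption}, the $p$-adic closure $\overline{T(\OK)}\subset \Res_{\OK/\Z}T(\Z_p)\cong T(K\otimes_{\Q}\Q_p)$ is a $p$-adic analytic subgroup of dimension at most $r+\delta(\rho-1)$. Parameterising it by convergent series in the Tate algebra $R=\Z_p\langle z_1,\dots,z_{r+\delta(\rho-1)}\rangle$ and pulling back the defining equations of $\tilde\jmath(\Res_{K/\Q}C)$ inside $\Res_{\OK/\Z}T$ produces the ideal $I\subset R$. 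The hypothesis \eqref{eq:geom_cc} is exactly the statement that the expected dimension
\[
d + r + \delta(\rho-1) - (g+\rho-1)d \;=\; r+\delta(\rho-1)-(g+\rho-2)d
\]
of the intersection $\overline{T(\OK)}\cap\tilde\jmath(\Res_{K/\Q}C)(\Q_p)$ is non-positive, so that \emph{generically} $\spec(R/I)$ is zero-dimensional. When this is realised, i.e.\ when $\overline A=(R/I)\otimes_{\Z_p}\F_p$ is finite-dimensional, a Mordell--Weil-sieve argument in the spirit of \cite{EL19}, showing that distinct elements of $C(K)$ produce distinct $\F_p$-points of $\spec(\overline A)$, yields the bound $\#C(K)\leq\dim_{\F_p}\overline A$.

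The main obstacle will not be this final dimension count, which is essentially built into the setup, but rather the construction of $T$ and of $\tilde\jmath$ on the integral models and their Weil restrictions in enough generality that the rank inequality above is effective, and that the ideal $I$ is genuinely mod-$p$ computable. This requires careful bookkeeping of the biextension cocycles at bad primes, control of the image of $T(\OK)$ in the component group of $\mathcal{J}$, and a choice of $p$ (via Assumption~\ref{passumption}) for which the formal-group logarithms converge on a polydisc large enough to contain $\overline{T(\OK)}$. These are precisely the technical ingredients that were assembled over $\Q$ in \cite{EL19} and that must now be transported through the Weil restriction formalism.
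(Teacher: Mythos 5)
Your outline is recognizably in the same orbit as the paper's argument; indeed Remark~\ref{rem:singleprime} notes that the paper's construction is equivalent to applying restriction of scalars to the torsor $T_K$ built over $K$, which is what you propose. But there are two genuine gaps. First, you assert that the Poincar\'e biextensions ``trivialised along the two axes, can be glued and summed to form a $\Gm^{\rho-1}$-torsor $T\to J$ together with a canonical lift $\tilde\jmath\colon C\to T$,'' deferring the construction as the ``main obstacle.'' This is the content, not an obstacle: birigidification does \emph{not} make $j^*(\id,f)^*P^\times$ trivial on $C$ for a self-dual $f\in\Hom(J,J^\vee)$. One must translate by a specific point $c_f\in J^\vee(K)$ (a Zhang/Chow--Heegner-type point, Proposition~\ref{prop:trivtorsor}), and over $\oh_K$ the lift only exists on opens $\U$ obtained by deleting all but one component of each bad fiber, after additionally multiplying by the class number $h$ and by the exponent of the component group (Lemma~\ref{lem:h}, Proposition~\ref{prop:Ulift}, Lemma~\ref{lem:PicU}). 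Without this the torsor $T$ and the lift $\tilde\jmath$ do not exist over $\oh_K$, and there is nothing to Weil-restrict.

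Second, the final counting step --- that ``distinct elements of $C(K)$ produce distinct $\F_p$-points of $\spec(\overline A)$'' --- is false and would not give the stated bound: two $\Z_p$-points of $\spec A$ lying in the same residue disk can reduce to the same $\F_p$-point. The correct argument, as in Theorem~\ref{thm:main_rough_form}, bounds $\vert\Hom(A,\Z_p)\vert$ by $\sum_{\fm}\rank_{\Z_p}A_{\fm}\leq\dim_{\F_p}\overline A$, residue disk by residue disk, and then sums over the finitely many pairs $(\U,u)$. Relatedly, $\overline{\TT(\oh_K)_t}^p$ is not a ``$p$-adic analytic subgroup'' --- $\TT$ is a torsor, not a group; it is a $p$-adic submanifold parametrized by the interpolation map $\kappa$, whose construction via the biextension partial laws $+_1,+_2$, a careful choice of initial points in $\PP^{\times,\rho-1}(\oh_K)$, and adjustments by prime-to-$p$ roots of unity is the technical heart of Section~\ref{s:closure}. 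A formal-group logarithm on a nonexistent group cannot replace this machinery.
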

 
  \begin{remark} \label{remark:main_intro}
The precise form of this theorem (Corollary \ref{cor:GQCNF}) is slightly more involved. As in the work of Edixhoven and Lido, we need to work integrally with a regular proper model $\CC$ of $C$ over $\oh_K$ and cover the smooth locus $\CC^{\sm}$ by certain open subschemes $\U_i$. More precisely, for each $p$-adic residue disk of $\U_i$ (indexed by the residue points $u\in \U_i(\oh_K\otimes \F_p)$), we produce a bound on the size of $\U_{i}(\oh_K)_u$ by constructing an ideal $I_{i, u} \subset R$. The bound on the size of $C(K)$ is then obtained by summing the bounds for each $i$ and $u$.  
 \end{remark}


\subsection{Effective methods over $\Q$}\label{s:Qmethods}

To put our result into context, we give a brief overview of some of the existing effective methods. 
 
\subsubsection{The Chabauty--Coleman method}\label{s:intro:CC}
If the Mordell--Weil rank $r$ of the Jacobian $J$ of $C$ satisfies the inequality  
$r < g$, then the work of Chabauty \cite{chab41} and Coleman \cite{chabcol}  can be used to give upper bounds on the size of $C(K)$, and in many cases, to explicitly compute the set of rational points. Their method works over general number fields, but let us take $K = \Q$ to explain their strategy.
Upon choosing a sufficiently large prime $p$ of good reduction, one obtains a homomorphism 
$$\log_p: J (\Q_p) \lra H^0 (C_{\Q_p}, \Omega^1)^{\vee} \simeq H^0 (J_{\Q_p}, \Omega^1)^{\vee} $$
using Coleman integration (see \cite{coleman} for details). 
The Abel--Jacobi map $j_b: C \hookrightarrow J$, which relies on a fixed base point $b \in C(\Q)$, leads to the following commutative diagram:
\begin{equation}\label{diag:chabcol}
\begin{tikzcd}
C(\Q) \arrow{r} \arrow{d}{j_b} & C(\Q_p)  \arrow{d}{j_b} \arrow{rd}{\int} \\
J(\Q) \arrow{r} & J (\Q_p)   \arrow{r}[swap]{\log_p} & H^0 (C_{\Q_p}, \Omega^1)^{\vee}.
\end{tikzcd}
\end{equation}
The Chabauty condition $r < g$ guarantees that the closure $\overline{J(\Q)}^p$ of $J(\Q)$ in $J (\Q_p)$ with respect to the $p$-adic topology has positive codimension. 
 In particular, there exists a nontrivial differential form $\omega$ for which $\log_p(\omega)$ vanishes on $\overline{J(\Q)}^p$. Roughly, on each residue disk of the curve $C$, the Coleman integral $\log_p(\omega)\circ j_b=\int \omega$ is given by a convergent $p$-adic power series and has only finitely many zeros. Since it vanishes on $C(\Q)$, this set is finite.  
Moreover, using Newton polygons, Coleman \cite{chabcol} was able to count the number of zeros of these $p$-adic power series and prove, when $p>2g$, that
\[
\vert C(\Q) \vert \leq \vert C(\F_p) \vert+(2g-2). 
\]

\subsubsection{Quadratic Chabauty}\label{s:intro:QC}
The tantalizing non-abelian Chabauty program, initiated by Kim \cite{Kim05, Kim09}, aims to relax the Chabauty condition $r<g$ by considering non-abelian variants of the objects in Section \ref{s:intro:CC}. To this end one first reinterprets the diagram \eqref{diag:chabcol} above using the Bloch--Kato Selmer groups $H^1_f(\Q, V)$ (resp. $H^1_{f} ( {\Q_p}, V)$) in place of $J(\Q)$ (resp. $J(\Q_p)$) via the Kummer maps, where $V:=V_p J$ denotes the $p$-adic Tate module of $J$. The logarithm map above is essentially the inverse of the Bloch--Kato exponential 
$$ H^0 (C_{\Q_p}, \Omega^1)^\vee \simeq \textup{D}_{\textup{dR}} (V)/\textup{D}^+_{\textup{dR}} (V) \xrightarrow{\:\: \text{ exp } \:\:} H^1_f (\Q_p, V).$$
Next one replaces $V$ by certain pro-unipotent quotients $U_n$ of the \'etale fundamental group $\pi_1^{\text{\'et}} (C_{\overline \Q})_{\Q_p}$, one for each $n \ge 1$, which again carry a continuous Galois action. Kim then defines a certain Selmer subgroup $\textup{Sel}(U_n) \subset H^1_f (\Q, U_n)$, and upgrades the previous diagram to the following one:
\[
\begin{tikzcd}
C(\Q) \arrow[r] \arrow[d, "j_n"] & C(\Q_p)  \arrow[d, "j_{n, p}"] \arrow[rd, "\int"] \\
\textup{Sel}(U_n) \arrow{r}[swap]{\textup{loc}_p} & H^1_{f} (\Q_p, U_n)  \arrow{r}[swap]{\textup{loc}_n}& \pi_1^{\textup{dR}} (C_{\Q_p})_{n}/ \textup{Fil}^0.
\end{tikzcd}
\]
Here the vertical maps $j_n$ and $j_{n, p}$ are Kim's unipotent Kummer maps. Define the (nested) sets
$$ C(\Q_p)_n : = j_{n, p}^{-1} \big(\textup{loc}_p (\textup{Sel}(U_n))\big) \subset C(\Q_p),  $$
which contain $C(\Q)$ and satisfy $C(\Q_p)_{n+1} \subset{C(\Q_p)_n}$. 
For sufficiently large $n$, Kim conjectures that $C(\Q_p)_n$ is finite and even coincides with $C(\Q)$. The set $C(\Q_p)_1$ is the one studied in the Chabauty--Coleman method in Section \ref{s:intro:CC}. 
The first non-abelian instance of Kim's program (namely the quadratic case where $n = 2$) has been carried out quite successfully by Balakrishnan,   Dogra, and their collaborators (see \cite{BD18, BD19, BD20} and the references therein). In particular, they show that  if the Mordell--Weil rank $r$ satisfies $r < g + \rho - 1$,  then $C(\Q_p)_2$ is finite and can often be explicitly determined.  This method has been applied to determine the rational points of the ``cursed curve'' in \cite{balakrishnan2019} by Balakrishnan, Dogra, M\"uller, Tuitman, and Vonk, which completes the classification of non-CM elliptic curves over $\Q$ of split Cartan type.

\subsection{Geometric quadratic Chabauty (over $\Q$)}\label{s:intro:GQC}
Recently, Edixhoven and Lido have explored in \cite{EL19} a different, and arguably more direct, approach to quadratic Chabauty. Their method is referred to as the geometric quadratic Chabauty method (with an emphasis on geometric), and works under the same  condition $r < g+\rho - 1$ as in Section \ref{s:intro:QC}. It has the advantage of avoiding the consideration of iterated Coleman integrals and the analysis of certain complicated $p$-adic heights. 
The strategy of Edixhoven and Lido is close in spirit to the original idea of Chabauty: their idea is to replace the Jacobian $J$ by a certain $\G_m^{\rho-1}$-torsor $T$ over $J$ in order to relax the condition $r<g$. This torsor comes equipped, by construction, with a lift $\tilde{j}_b : C\lra T$ of the Abel--Jacobi map. Upon choosing a prime $p$ of good reduction, one obtains the following diagram
\begin{equation}\label{diag:EL}
 \begin{tikzcd}  C(\Q)  \arrow{d}{\tilde{j}_b}  \arrow[rr] &&  C(\Q_p) \arrow{d}{\tilde{j}_b} \\   T(\Q)  \arrow[r] & \overline{T(\Q)}^p \arrow[r]   & T(\Q_p), 
 \end{tikzcd} 
 \end{equation}
 where $\overline{T(\Q)}^p$ denotes the closure of $T(\Q)$ in $T(\Q_p)$ with respect to the $p$-adic topology. In practice, they need to work with certain integral versions of the above objects and diagrams over $\spec(\Z)$ (see Remark \ref{remark:integral_model}). The method then consists in bounding the size of the intersection 
 \begin{equation}\label{interEL}
 \tilde{j}_b^{-1}(\overline{T(\Z)}^p\cap \tilde{j}_b(C(\Z_p))),
 \end{equation}
 which contains the set of rational points. Using this method, Edixhoven and Lido are able to reprove Faltings' theorem for curves defined over $\Q$ satisfying $r < g+\rho - 1$. Furthermore, they have made their method effective and successfully used it to compute the rational points on the quotient of the modular curve $X_0(129)$ by the Atkin--Lehner group $\langle w_3, w_{43}\rangle$. 

\begin{remark} \label{remark:integral_model}   The reason for working with integral models is that the torsor $T$ has too many rational points (its fiber over $J$ is $\G_m^{\rho - 1}$). More precisely, one starts with a regular proper integral model $\CC$ of $C$, and further restricts to open subschemes on which the Abel--Jacobi map can be lifted integrally (as mentioned in Remark \ref{remark:main_intro}). 
\end{remark}

\subsection{Effective methods over number fields}\label{s:intro:GQCNF}

The methods summarized so far deal with curves defined over $\Q$ satisfying Chabauty type conditions, for example, 
\[
\begin{cases}
r<g & \qquad \qquad \text{ (Chabauty condition)} \\
r<g+\rho-1 & \qquad \qquad \text{ (quadratic Chabauty condition)}.
\end{cases}
\]
Next we briefly review some recent developments in generalizing the methods in Section \ref{s:Qmethods}  to curves over arbitrary number fields.

\subsubsection{The Chabauty--Coleman method}\label{s:intro:CCK}

As remarked earlier, the Chabauty--Coleman method works over an  arbitrary number field $K$. In fact, Coleman already works at this level of generality in  \cite{chabcol}. 
 

\subsubsection{The Restriction of Scalars (RoS) Chabauty method}\label{s:intro:RoSC}

In \cite{siksek}, Siksek extends the Chabauty--Coleman method by studying the Weil restrictions from $K$ to $\Q$ of the curve $C$ and its Jacobian $J$. In this way Siksek reduces the problem to working entirely over $\Q$ at the cost of considering higher dimensional varieties. 
Siksek's method, known as Restriction of Scalars (RoS) Chabauty, requires the RoS Chabauty condition $r\leq (g-1)d,$ where $d = [K:\Q]$ is the degree of $K$. Note, however, that this method can fail to produce a bound on the number of rational points even if the condition $r \le (g-1)d$ is satisfied. Examples include the case where the curve $C$ is the base change of a curve $C'$ defined over $\Q$, which does not satisfy the Chabauty condition $\rank_\Z \Jac(C')<g$. Aware of this, Siksek asks in \cite{siksek} whether a sufficient condition for his method to prove finiteness is that for all extensions $\Q\subset L\subset K$ over which $C$ admits a good model $C_L$, we should have
\[
\rank_\Z \Jac(C_L)\leq (g-1)[L:\Q].
\]
Failures of the method of RoS Chabauty have been studied by Triantafillou \cite{triantafillou}, who introduces Base-Change-Prym (BCP) obstructions, which account for all known failures to date.  

\subsubsection{RoS quadratic Chabauty}\label{s:intro:RoSQC}

More recently, Dogra \cite{Dogra19} combines ideas of the RoS Chabauty method with Kim's non-abelian Chabauty program. This has led to a generalization of Kim's program to arbitrary number fields. As in Section \ref{s:intro:QC}, he obtains (nested) Chabauty--Kim 
sets $C(K_\fp)_n$ for $\fp \mid p$, indexed by $n \in \Z_{\ge 1}$. 
Dogra provides a negative answer to Siksek's question using a BCP obstruction.  Assuming $p$ is a split prime of good reduction, he also gives a sufficient condition for there to exist a prime $\fp\mid p$ such that $C(K_\fp)_1$ is finite under the Chabauty condition $r\leq (g-1)d$: namely that 
\begin{equation}\label{hom}
\Hom(J_{\bar{\Q}, \sigma_1}, J_{\bar{\Q}, \sigma_2})=0  \text{ for any two distinct embeddings } \sigma_1, \sigma_2 : K\hookrightarrow \bar{\Q}.
\end{equation}
Moreover, his results imply that if in addition to Condition \eqref{hom} the quadratic (RoS) Chabauty condition (\ref{eq:geom_cc})
as in our main theorem is satisfied, then there exists  $\fp \mid p$ such that $C(K_\fp)_2$ is finite. The work of Dogra sets the theoretical stage for the quadratic RoS Chabauty method, which has been made effective recently by Balakrishnan, Besser, Bianchi, and M\"uller in \cite{BBBM19} for odd degree hyperelliptic curves and genus $2$ bielliptic curves. 

\subsection{Overview of the geometric method}\label{s:overview}
Next we explain the proof of Theorem \ref{thm:intro_main}. Following suggestions by the referee, we first describe the geometric method in a more general setup. Our main theorem can then be viewed as carrying out this method in a special case. To this end, let $X_K$ be a proper variety over a number field $K$ which admits a regular proper model $\mathbf{X}$ over $\mathcal{O}_K$. The key idea from \cite{EL19} is to replace the Jacobian in Chabauty's original approach by something higher dimensional. To phrase their approach in a more general setup, let us \textit{assume} that we are given closed embeddings
$$ X_K \xrightarrow{ \: j \: } J_K \xrightarrow{ \: f \: } A_K \times B_K $$
where $J_K, A_K, B_K$ are abelian varieties and $f$ is a homomorphism. Let us further \textit{assume} that there is a $G_K$-biextension $F_K$ of $(A_K, B_K)$ where $G_K$ is a torus, such that the map $j$ lifts to an embedding $\tilde j:  X_K \ra Q_K$, where $Q_K$ is the $G_K$-torsor over $J_K$ obtained as the pullback of $F_K$ along $f$:  
\begin{equation}\label{diag:AJliftQK}
\begin{tikzcd}[bend angle = 20]
  & Q_K  \arrow{r} \arrow{d} \arrow[phantom]{dr}{\square}
  & F_K \arrow{d} \\
  X_K \arrow{r}{j} \arrow[rightarrow]{ru}{\tilde{j}}
  & J_K \arrow{r}{f}
  & A_K\times B_K.
\end{tikzcd}
\end{equation} 
In view of Remark \ref{remark:integral_model}, we \textit{assume} that the diagram above spreads out over $\mathcal{O}_K$:
\begin{equation}\label{diag:AJliftUQ1}
\begin{tikzcd}[bend angle = 20]
  & \mathbf{Q}  \arrow{r} \arrow{d} \arrow[phantom]{dr}{\square}
  & \mathbf{F} \arrow{d} \\
  \mathbf{X}   \arrow{r}{j}    \arrow[rightarrow]{ru}{\tilde{j}}
  & \J \arrow{r}{f}
  & \AA\times \BB^\circ.
\end{tikzcd}
\end{equation}
Here the abelian varieties $A_K, J_K$ (resp. $B_K$) are replaced by their N\'eron models (resp. the connected component of identity of its N\'eron model), and $\mathbf F$ is the (unique) $\mathbf G$-biextension of $(\mathbf A, \mathbf B^{\circ})$ where $\mathbf G$ is the N\'eron--Raynaud model of $G_K$. 
\begin{remark} 
In practice, we often have to replace $\mathbf X$ by a Zariski open cover $\mathbf U_i$ of $\mathbf{X}^{\sm}$ (where $\mathbf X^{\sm}$ denotes the smooth locus in $\mathbf{X}$),  and work with one $\mathbf U_i$ at a time. 
\end{remark}
 
 \noindent For a prime $p$ of good reduction for $\mathbf{X}$, we let $\oh_{K, p} := \oh_K \otimes_{\Z} \Z_p$ and $\overline{\oh_{K,p}}:=(\oh_K \otimes \mathbb{F}_p)_{\mathrm{red}}$. Working residue disk by residue disk, we fix $u\in \mathbf{X}(\overline{\oh_{K,p}})$ and $t=\tilde{j}(u)\in \mathbf{Q}(\overline{\oh_{K,p}})$. Consider the following commutative quadratic Chabauty diagram on residue disks:
\begin{equation} \label{diagram:introX}
 \begin{tikzcd}  \mathbf X(\oh_{K})_u  \arrow[d,  "\tilde j"]  \arrow[rr] &&  \mathbf X (\oh_{K, p})_u \arrow[d, "\tilde j"] \\   \mathbf Q (\oh_K)_t  \arrow[r] & \Y_t \arrow[r]   & \mathbf Q (\oh_{K, p})_t. \end{tikzcd} 
 \end{equation}
Here $\Y_t := \overline{\mathbf Q (\oh_K)_t}^p$ is the closure of $\mathbf Q (\oh_K)_t$ in $\mathbf Q(\oh_{K, p})_t$ for the $p$-adic topology. 
The points in $X_K (K) = \mathbf X (\oh_K)$ that reduce to $u$ modulo $p$ are contained in the intersection  $ \tilde{j}(\mathbf X(\oh_{K, p}))_u  \cap \Y_t,$ which is often computable. The framework to explicitly determine this intersection can be described as follows: using the structure of the rational points on $J_K$ and the $\mathbf G$-torsor structure of $\mathbf Q \ra \mathbf J$, one constructs a certain ``coordinate map'' $\Z^{m} \ra \mathbf Q(\mathcal O_K)_t$, from which one can build a map $\kappa: \Z_p^m \twoheadrightarrow \mathbf Y_t$ between $p$-adic analytic spaces, where $m=\rank_\Z J_K (K) + \rank_\Z \mathbf G (\oh_K)$. The map $\kappa$ is then used to give effective bounds on the intersection   $ \tilde{j}(\mathbf X(\oh_{K, p})_u)  \cap \Y_t$ when it is finite. In the next subsection, we explain this procedure in more detail for the primary example treated in this article, namely when $X_K$ is a higher genus curve over $K$. 

\begin{remark} 
While in principle we expect the method of this article to work whenever we are given the setup as in diagram (\ref{diag:AJliftUQ1}), in practice is seems rare to find such a lift $\tilde j$. This is the main reason why we choose to treat only the case when $X_K$ is a higher genus curve. For example, in \cite{caropasten} a Chabauty--Coleman type bound was proved for hyperbolic surfaces contained in abelian varieties. However, for a hyperbolic surface contained in a principally polarized abelian variety, we do not know how to construct a torsor which fits in (the analogue of) Diagram (\ref{diag:AJliftUQ1}).
\end{remark}

\subsection{Higher genus curves over number fields}\label{s:HGC}
Let $C_K$ be a curve of genus $\ge 2$ over $K$ and consider a regular proper model $\CC$ of $C_K$ over  $\oh_K$. In this case, we will produce Diagram (\ref{diag:AJliftUQ1}) by considering the Poincar\'e bundle over $J_K \times J_K^\vee$ where $J_K$ denotes the Jacobian of $C_K$. In order to produce the desired lift $\tilde j_b$ (dependent on a base point $b\in C_K(K)$), we in fact consider the $(\rho-1)$-fold self-product of the Poincar\'e bundle over $J_K$, where $\rho$ is the N\'eron--Severi rank. Our map $J_K \ra J_K \times J_K^{\vee}$ in diagram (\ref{diag:AJliftQK}) depends on a choice of (sub-)basis of the N\'eron--Severi group, and is obtained by translating these basis elements by certain points of $J_K^\vee(K)$ that are closely related to Chow--Heegner points (see Section \ref{trivialization} and Remark \ref{remark:Chow_Heegner}). With this carefully chosen map, we obtain a $\G_m^{\rho-1}$-torsor $T_K$ over $J_K$, which spreads out to a $\G_m^{\rho-1}$-torsor $\TT$ over $\J$, the latter being the N\'eron model of $J_K$. 
The construction of $\TT$ allows us to lift the Abel--Jacobi map $j_b: \CC^{\sm} \lra \J$ to a map\footnote{ upon replacing $\CC^{\sm}$ by a certain Zariski open cover}
$$ \tilde j_b : \CC^{\sm}  \lra  \TT \:\: $$ 
as required by Diagram (\ref{diag:AJliftUQ1}). 
Fixing $u\in \CC^{\sm}(\overline{\oh_{K,p}})$ and letting $t:=\tilde j_b(u)\in \TT(\overline{\oh_{K,p}})$, we thus arrive at the following commutative diagram on residue disks:
\begin{equation} \label{diagram:intro}
 \begin{tikzcd}  \CC^{\sm}(\oh_{K})_u  \arrow[d,  "\tilde j_b"]  \arrow[rr] &&  \CC^{\sm}(\oh_{K, p})_u \arrow[d, "\tilde j_b"] \\   \TT(\oh_K)_t  \arrow[r] & \Y_t \arrow[r]   & \TT(\oh_{K, p})_t. \end{tikzcd} 
 \end{equation} 
The key of the approach is to analyze the $p$-adic closure $\Y_t$ of the $\oh_K$-points of the torsor $\TT$ reducing to $t$ modulo $p$. If $K = \Q$, this can be done by parametrizing the $p$-adic closure of $\J(\Z) = J (\Q)$, as $\Gm (\Z) = \{ \pm 1\}$. This is a major simplification and essentially why \cite{EL19} decided to work over $\Q$. In fact, it was suggested to us by Edixhoven and Lido that a restriction of scalars approach might reduce the case of general number fields $K$ back to the case of $\Q$. In this work, however, we decide to take a more direct approach, which departs from the RoS arguments of \cite{siksek, BBBM19, Dogra19}  (see Remark \ref{rem:singleprime} below). One of the main observations is that one can fully utilize the $\Gm$-action on the fibers of the torsor $\TT \lra \J$ to parametrize $\Y_t$, which is sufficient for the purpose of this paper. Roughly, we pick a ``$\Z$-coordinate'' map $\Z^{r} \lra \TT(\oh_K)_t$, essentially by choosing a basis for the Mordell--Weil group $J_K(K)$. We then use the $\Gm$-action to propagate these coordinates to get a ``$\Z$-coordinate'' map $\Z^{\delta(\rho - 1) + r} \lra \TT(\oh_K)_t$. 
Finally, interpolating these coordinates $p$-adically allows us to parametrize $\Y_t$ via a surjective map $$\kappa: \Z_p^{\delta (\rho - 1)+r} \twoheadrightarrow \Y_t, $$ 
which turns out to be given by convergent $p$-adic power series. The ideal $I$ in Theorem \ref{thm:intro_main} is built such that the cardinality of  $\spec (R/I)(\Z_p)$ is the size of $\kappa^{-1} (\Y_t \cap \tilde{j}_b(\CC^{\sm}(\oh_{K, p})_u))$. In particular, in order for the method to effectively determine the rational points on $C_K$, we need to choose a prime $p$ such that  
\begin{itemize} 
\item $\Y_t \cap \tilde{j}_b(\CC^{\sm} (\oh_{K, p})_u)$ is finite, and
\item $\kappa$ is finite-to-one on $ \kappa^{-1} (\Y_t \cap \tilde{j}_b(\CC^{\sm}(\oh_{K, p})_u))$. 
\end{itemize}
This observation prompts the following question (see Section \ref{s:question}): 

\begin{question}\label{q1}
Let $p$ be a prime of good reduction for $\CC$. What conditions would guarantee that the intersection $\Y_t \cap \tilde{j}_b(\CC^{\sm} (\oh_{K, p})_u)$ is finite? 
\end{question}

\begin{remark} 
There are some further caveats hidden from the overview above. For example, as hinted at in Remark \ref{remark:main_intro}, in order to obtain the lift $\widetilde j_b: \CC^{\sm} \ra \TT$, we need to restrict to certain open subschemes $\U_i$ of $\CC^{\sm}$. The details of these complications are spread throughout the article (see for instance Sections \ref{trivialization} and \ref{s:construct}). 
\end{remark}

\begin{remark}
As our approach has a theoretical component (the geometric method) and an effective component, it is appropriate to distinguish between the required Chabauty type conditions. The requirement for the geometric method is the condition \eqref{eq:geom_cc}, called the geometric quadratic Chabauty condition. Effectiveness relies on the map $\kappa$ being finite-to-one, for which we need to (at least) impose the additional condition 
$
\dim \Y_t = r+\delta(\rho-1).
$
The combination of the two conditions is the ``effective geometric quadratic Chabauty condition''. 
We refer to Section \ref{s:question} for further details.
\end{remark}


\begin{remark}[Restriction of scalars]\label{rem:singleprime}
In our approach we work with all primes of $K$ above $p$ simultaneously (similar to the work of Siksek). If we were to work with a single fixed prime over $p$, the method would only have a chance of working if the following condition was satisfied: 
\[
r+\delta(\rho-1) < g+\rho-1.
\]
When $K$ is imaginary quadratic, this amounts to the same quadratic Chabauty condition as over $\Q$. However, if $K$ is real quadratic, the condition becomes $r<g$ and the Chabauty--Coleman method can already be applied. When considering higher degree number fields, the above condition is more restrictive than the classical Chabauty condition. 
Therefore, it seems necessary to work with all primes above $p$ simultaneously in order to study curves satisfying Condition \eqref{eq:geom_cc}. This comes as no surprise, as Condition \eqref{eq:geom_cc} coincides with the condition coming from Dogra's quadratic RoS Chabauty method, which also involves all primes above $p$. However, the present generalization of the geometric quadratic Chabauty method does not make use of restriction of scalars in the same way as the RoS methods of Siksek and Dogra. 
While they use Weil restrictions to reduce the geometric situation to working over $\Q$, we work directly over $K$, and even integrally over $\oh_K$. Only at the end of the argument do we apply a restriction of scalars and work with all primes above $p$ simultaneously.  
More conceptually, the method of this paper is equivalent to applying the general geometric method described in Section \ref{s:overview} to the situation over $\Q$ with $X_\Q:=\Res_{K/\Q}(C_K)$, $J_\Q:=\Res_{K/\Q}(J_K)$, $Q_{\Q}:=\Res_{K/\Q}(T_K)$, $A_{\Q}:=\Res_{K/\Q}(J_K)$, $B_{\Q}:=\Res_{K/\Q}(J_K)^{\vee, \rho-1}$, and $F_{\Q}:=\Res_{K/\Q}(P_K^{\times})^{\rho-1}$. Here $T_K$ is the special torsor mentioned in Section \ref{s:HGC} and constructed in Section \ref{s:consT}, and $P_K^\times$ denotes the Poincar\'e $\G_m$-biextension of $(J_K, J_K^\vee)$ (see Section \ref{S:PoincareTorsor}). In contrast, a perhaps more natural approach to geometric quadratic Chabauty via restriction of scalars would have been to consider $F_K$ to be the Poincar\'e $\G_m$-biextension of $(\Res_{K/\Q}(J), \Res_{K/\Q}(J_K)^\vee)$, or a power thereof. This was the approach originally suggested to us by Edixhoven and Lido. However, it seems difficult in this case to produce the diagrams \eqref{diag:AJliftQK} and \eqref{diag:AJliftUQ1} required for the geometric method of Section \ref{s:overview}.
\end{remark}

\begin{remark}[$p$-saturation] 
 In \cite[Theorem 4.1]{spelierhashimoto}, Hashimoto and Spelier prove that (a linear version of) geometric Chabauty strictly outperforms Chabauty--Coleman, at least theoretically, although it is more difficult to implement for explicit examples. The reason is essentially that Chabauty--Coleman computes the $p$-saturation of the set computed in geometric (linear) Chabauty. Similarly, the geometric quadratic Chabauty method (both over $\Q$ and over number fields) is likely to remain unaffected by questions of $p$-saturation.
\end{remark}

\subsection{Outline}

In Section \ref{s:torsor}, we recall the necessary background on the Poincar\'e torsor, from which we build the torsor $T_K$ over $J_K$. We then spread out the geometry from $\spec K$ to $\spec \oh_K$. In Section \ref{s:consT}, we construct the torsor $\TT$. Section \ref{s:method} makes the strategy of the geometric quadratic Chabauty method precise. We then state the main technical results of this article and discuss how the condition \eqref{eq:geom_cc} arises.  
In Section \ref{s:closure}, which is the technical core of the paper, we parametrize the $p$-adic closure $\Y_t$ of the rational points $\TT(\oh_K)_t$ by a $p$-adic interpolation argument. We complete the proof of the main theorem in Section \ref{s:end}. Some questions are discussed in Section \ref{s:question}, including Question \ref{q1} raised above.

\subsection{Notation} \label{ss:notation}

For the convenience of the reader we provide a list of notations used in the main body of the paper. We have chosen to use notations similar to \cite{EL19} in order to facilitate the comparison with the original approach over $\Q$.

\begin{itemize}
\item $K/\Q$ is a number field of degree $d=r_1+2r_2$, where $r_1$ and $r_2$ are respectively the number of real embeddings and pairs of complex embeddings of $K$. 
\item $\oh_K$ denotes the ring of integers of $K$. 
\item $\delta=r_1+r_2-1$ is the rank of the unit group $\oh_K^\times$. 
\item $h=\cl(\oh_K)$ is the class number of $\oh_K$. 
\item $C_K$ is a smooth, proper, and geometrically connected curve over $K$ of genus $g\geq 2$. 
\item $J_K=\Pic^0_{C_K/K}$ is the Jacobian of $C_K$. It is an abelian variety of dimension $g$ over $K$.
\item $J^{\vee}_{K}=\Pic^0_{J_K/K}$ is the dual abelian variety of $J_K$.
\item $P^\times_K\lra J_K\times J_K^\vee$ is the Poincar\'e torsor. It is a biextension of $J_K$ and $J_K^\vee$ by $\G_m$.
\item $\CC$ is a regular proper model of $C_K$ over $\oh_K$ that we fix in this article.
\item $\J$ is the N\'eron model of $J_K$ over $\oh_K$.
\item $\J^\vee$ is the N\'eron model of $J_K^\vee$ over $\oh_K$.
\item $\Jo$ is the fiber-wise connected component of $0$ of $\J^\vee$.
\item $\mathbf{P}^\times\lra \J\times \Jo$ is the unique biextension of $\J$ and $\Jo$ by $\G_m$ whose base change to $K$ is the Poincar\'e torsor.
\item $j_b : C_K\lra J_K$ is the Abel--Jacobi map associated to a choice of point $b\in C_K(K)$.
\item $r=\rank_{\Z} J_K(K)$ is the Mordell--Weil rank over $K$.
\item $\rho=\rank_{\Z} \NS_{J_K/K}(K)$ is the rank of the N\'eron--Severi group of $J_K$ over $K$.
\end{itemize}

 \addtocontents{toc}{\protect\setcounter{tocdepth}{1}}


\section{The Poincar\'e biextension} \label{s:torsor}

We define the key geometric object studied in this article, namely the Poincar\'e torsor, along with its biextension structure, and explain how to spread out the geometry to $\spec \oh_K$.

\subsection{The Poincar\'e biextension over $K$}


\subsubsection{The Poincar\'e bundle}\label{S:PicBun}

Let $C_K$ be a smooth, proper, geometrically connected curve of genus $g\geq 2$ defined over $K$ with $C_K(K)\neq \emptyset$. Let $J_K:=\Pic_{C_K/K}^0$ be its Jacobian, that is, the connected component of the identity of the Picard scheme $\Pic_{C_K/K}$. This is an abelian variety of dimension $g$ defined over $K$. We denote its zero section by $0\in J_K(K)$, or alternatively by $e : \spec K \lra J_K$. Consider the Picard scheme $\Pic_{J_K/K}$ over $K$ as the contravariant functor from the category of $K$-schemes to abelian groups given by 
\begin{equation}\label{pic}
T \mapsto  \Pic(J_K \times T)/\pr_T^* \Pic(T),
\end{equation}
where $\pr_T : J_K \times T\lra T$ is the base-change of the structure morphism $J_K\lra \spec K$.  Denote by $J_K^\vee:=\Pic^0_{J_K/K}$ the  dual abelian variety of $J_K$,  
which comes equipped with a canonical principal polarization $\lambda : J_K\overset{\sim}{\lra} J_K^\vee$, given by translating the theta divisor. 
The functor described by \eqref{pic} is isomorphic to the functor given by 
\[
T\mapsto \{ \text{ isomorphism classes of rigidified line bundles } (L, \alpha) \text{ on } J_K\times T \: \}.
\]
Here a rigidification of the line bundle $L$ is an isomorphism $\alpha : \oh_T \overset{\sim}{\lra} e_T^* L$, where the section $e_T : T\lra J_K\times T$ is the one induced by $e$. Let $(P_K, \nu)$  denote the  universal rigidified line bundle over $J_K \times \Pic_{J_K/K}$. It satisfies the following universal property: if $(L, \alpha)$ is a rigidified line bundle on $J_K\times T$ along the zero section $e$, then there is a unique morphism $g : T \lra \Pic_{J_K/K}$ such that $(L, \alpha) \simeq (\id_{J_K}\times g)^*(P_K, \nu)$. 
The Poincar\'e bundle of $J_K$ is the restriction of the universal line bundle $P_K$ to $J_K\times J_K^\vee$ equipped with its canonical rigidification $\nu$, which we denote again by $P_K$ by a slight abuse of notation. 
The canonical rigidification of the Poincar\'e bundle gives rise to an isomorphism 
$\nu : \oh_{J_{K}^\vee}\overset{\sim}{\lra} P_K \vert_{\{ 0 \} \times J_K^\vee}.$
 Let $0$ denote the identity of the abelian variety $J_K^\vee$. There is a unique rigidification 
$
\nu' : \oh_{J_K} \overset{\sim}{\lra} P_K \vert_{J_K \times \{ 0 \}},
$
such that $\nu$ and $\nu'$ agree at the origin $(0, 0)$ in $(J_K\times J_K^\vee)(K)$. As a consequence, $(P_K, \nu, \nu')$ is a birigidified line bundle on $J_K\times J_K^\vee$ with respect to the identity elements.


\subsubsection{The Poincar\'e torsor}\label{S:PicTors}

Recall that given a line bundle $L$ on a scheme $X$, its associated $\Gm$-torsor is $L^\times := \textbf{Isom}_X (\oh_X, L)$, which is equipped with a free and transitive  action of $\Gm$.  
Note that $L^\times$ is Zariski locally trivial. In particular, it is represented by a scheme over $X$ which we again denote by $L^\times$ by slight abuse of notation (concretely, $L^\times$ is locally obtained by deleting the zero section of $L$). The Poincar\'e torsor $P_K^\times$ is the $\G_m$-torsor on $J_K\times J_K^\vee$ associated to the Poincar\'e bundle $P_K$.  Again, we  denote by $P_K^\times$ the scheme represented by the Poincar\'e torsor and  denote by $j_K : P_K^\times \lra J_K\times J_K^\vee$ the structure morphism. The torsor $P_K^\times$ inherits the compatible birigidification over $J_K\times \{ 0 \}$ and $\{ 0 \}\times J_K^\vee$ coming from $P_K$.

\begin{remark} 
Note that  $\G_m$-torsors on $X$ are classified by the \v{C}ech cohomology $\check{H}^1(X, \G_m)$, thus the operation $L \mapsto L^{\times}$ gives the inverse $\Pic(X)\lra \check{H}^1(X, \G_m)$ of the canonical isomorphism $\check{H}^1(X, \G_m)\lra H^1(X, \G_m) \simeq \Pic(X)$. In particular, every $\G_m$-torsor on $X$ arises in this way, and $\Pic(X)$ classifies isomorphism classes of $\G_m$-torsors on $X$. 
\end{remark}

\subsubsection{The Poincar\'e biextension}\label{S:PoincareTorsor}


The birigidified torsor $P_K^\times$ admits a unique compatible structure of $\G_m$-biextension of the pair $(J_K, J_K^\vee)$ (see \cite[VII Definition 2.1 \& Exemple 2.9.5]{SGA7}). Let us briefly explain what this means, without repeating the technical definition from SGA 7. 


\begin{itemize}
  \item {\bf Partial composition $+_1$:} 
We may view $P_K^\times$ as a scheme over $J_K^\vee$ via the structure morphism $\pr_2\circ j_K$. As such, $P_K^\times$ becomes a commutative $J_K^\vee$-group scheme which is an extension of $J_{K, J_K^\vee} := J_K \times J_K^\vee$ by $\G_{m, J_K^\vee} = \Gm \times J_K^\vee$. In other words, $P_K^\times$ fits into the following short exact sequence of  $J_K^\vee$-group schemes
    \begin{equation}\label{ext1}
        1 \lra \G_{m, J_K^\vee} \lra P_K^\times \lra J_{K, J_K^\vee} \lra 0.
    \end{equation}
To wit, let $S$ be a $K$-scheme, $y \in J_K^\vee(S)$ be an $S$-point of $J^\vee_K$, and $x_1, x_2 \in J_K(S)$ be two $S$-points of $J_K$. Let $z_1, z_2 \in P_K^\vee (S)$ be two $S$-points lying above $(x_1, y)$ and $(x_2, y)$ respectively via the structure map $j_K$. This group structure can be described as follows. 
The data of the points $z_1$ and $z_2$ is equivalent to the data of two nowhere vanishing sections $\alpha_1, \alpha_2 \in (x_1, y)^* P_K (S)$ of the pullback of the Poincar\'e bundle. As part of the requirement of being a $\G_m$-biextension, we have an isomorphism of line bundles over $\oh_S$
    \begin{equation}\label{eq:thm_square}
(x_1, y)^* P_K \otimes (x_2, y)^* P_K \simeq (x_1+x_2, y)^* P_K,
\end{equation} supplied in this case by the theorem of the cube. 
Under this canonical isomorphism, the tensor product  $\alpha_1 \otimes \alpha_2$ corresponds to a nowhere zero section $\alpha_3$ of $(x_1+x_2, y)^* P_K$, thus producing a point $z_3 \in P_K^\times(S)$, which lies above the point  $(x_1+ x_2, y)$ of $J_K \times J_K^\vee$. The commutativity of $P_K^\times$ as a $J_K^\vee$-group is clear, as well as the exact sequence \eqref{ext1}. 
    We denote by $+_1$ the resulting partial composition law on $P_K^\times$, which provides the group structure of $P_K^\times$ over $J_K^\vee$, but not over $K$. In other words, it is defined on pairs of points $z_1, z_2\in P_K^\times(S)$ such that $$ \pr_2(j_K(z_1))=\pr_2(j_K(z_2)).$$
   We also denote the group structure on the $J_K^\vee$-group scheme $J_{K, J_K^\vee}$ by $+_1$, again slightly abusing notations. The partial composition law $+_1$ on $P_K^\times$ then satisfies
\[
 z_1 +_1 z_2 \in P_K^\times (S) \:\:  \longmapsto \: (x_1, y) +_1 (x_2, y) = (x_1+x_2, y) \in J_{K, J_K^\vee} (S). 
\]
    
\item {\bf Partial composition $+_2$:} 
On the other hand, we may view $P_K^\times$ as a  $J_K$-scheme via the structure morphism $\pr_1\circ j_K$. As above, this makes  $P_K^\times$ into an extension of $J^\vee_{K, J_K}$ by $\G_{m, J_K}$, 
    which fits into a short exact sequence of commutative $J_K$-group schemes
    \begin{equation}\label{ext2}
        1 \lra \G_{m, J_K} \lra P_K^\times \lra J^\vee_{K, J_K} \lra 0.
    \end{equation} 
   We denote by $+_2$ the resulting partial composition law on $P_K^\times$, this time defined on couples of points $z_1, z_2 \in P_K^\times(S)$ that satisfy 
    \[ \pr_1(j_K(z_1))=\pr_1(j_K(z_2)). \]
    
 \item {\bf Compatibility:}    
    The commutative group scheme extensions (\ref{ext1}) and (\ref{ext2}) are compatible in the following sense. Let $S$ be any $K$-scheme. Let $z_\alpha, z_\beta, z_\gamma, z_\delta \in P_K^\times (S)$ be arbitrary   $S$-points such that 
    $$j_K(z_\alpha) = (x_1, y_1), \quad j_K(z_\beta) = (x_1, y_2), \quad j_K(z_\gamma) = (x_2, y_1), \quad j_K(z_\delta) = (x_2, y_2),$$
     for some $S$-points $x_1, x_2\in J_K(S)$ and $y_1, y_2\in J_K^\vee(S)$. Then
    \begin{equation}
        (z_\alpha +_2 z_\beta)+_1 (z_\gamma +_2 z_\delta)=(z_\alpha+_1 z_\gamma)+_2 (z_\beta +_1 z_\delta).
    \end{equation}
We summarize this compatibility in the following picture for the convenience of the reader: 

\vspace{2mm}
\begin{center}
\tikzset{every picture/.style={line width=0.7pt}} 
\resizebox{7.5cm}{!}{
\begin{tikzpicture}[x=0.7pt,y=0.65pt,yscale=-1,xscale=1]
\draw  [dash pattern={on 4.5pt off 4.5pt}][line width=0.75]  (195.6,127.81) -- (282.39,127.81) -- (282.39,211.17) -- (195.6,211.17) -- cycle ;
\draw    (168.31,267.88) -- (307.65,268.28) ;
\draw    (426.62,118.03) -- (425.81,228) ;
\draw    (242.46,222.88) -- (242.46,260.8) ;
\draw [shift={(242.46,262.8)}, rotate = 270] [color={rgb, 255:red, 0; green, 0; blue, 0 }  ][line width=0.75]    (10.93,-3.29) .. controls (6.95,-1.4) and (3.31,-0.3) .. (0,0) .. controls (3.31,0.3) and (6.95,1.4) .. (10.93,3.29)   ;
\draw    (341.88,170.86) -- (407.51,171.62) ;
\draw [shift={(409.51,171.65)}, rotate = 180.66] [color={rgb, 255:red, 0; green, 0; blue, 0 }  ][line width=0.75]    (10.93,-3.29) .. controls (6.95,-1.4) and (3.31,-0.3) .. (0,0) .. controls (3.31,0.3) and (6.95,1.4) .. (10.93,3.29)   ;

\draw (173.97,110.59) node [anchor=north west][inner sep=0.75pt]   [align=left] {$\displaystyle z_{\alpha }$};
\draw (171.62,204.52) node [anchor=north west][inner sep=0.75pt]   [align=left] {$\displaystyle z_{\beta }$};
\draw (286.52,204.5) node [anchor=north west][inner sep=0.75pt]   [align=left] {$\displaystyle z_{\delta }$};
\draw (286.43,110.5) node [anchor=north west][inner sep=0.75pt]   [align=left] {$\displaystyle z_{\gamma }$};
\draw (174.79,275) node [anchor=north west][inner sep=0.75pt]    {$x_{1}$};
\draw (284.8,275) node [anchor=north west][inner sep=0.75pt]    {$x_{2}$};
\draw (444.52,113.34) node [anchor=north west][inner sep=0.75pt]    {$y\mathnormal{_{1}}$};
\draw (444.52,205.71) node [anchor=north west][inner sep=0.75pt]    {$y_{2}$};
\draw (245.54,234.24) node [anchor=north west][inner sep=0.75pt]    {$\mathrm{pr_{1}} \circ j_{K}$};
\draw (347.47,150.91) node [anchor=north west][inner sep=0.75pt]    {$\mathrm{pr_{2}} \circ j_{K}$};
\draw (320.1,257.96) node [anchor=north west][inner sep=0.75pt]  [font=\large]  {$J_K$};
\draw (412,243) node [anchor=north west][inner sep=0.75pt]  [font=\large]  {$J_K^\vee $};
\draw (138.95,158.56) node [anchor=north west][inner sep=0.75pt]  [font=\small,color={rgb, 255:red, 208; green, 2; blue, 27 }  ,opacity=1 ] [align=left] {$\displaystyle z_{\alpha }\!+_{2}\! z_{\beta }$};
\draw (284.1,159.91) node [anchor=north west][inner sep=0.75pt]  [font=\small,color={rgb, 255:red, 208; green, 2; blue, 27 }  ,opacity=1 ] [align=left] {$\displaystyle z_{_{\gamma }}\!+_{2}\! z_{\delta }$};
\draw (213.02,111.59) node [anchor=north west][inner sep=0.75pt]  [font=\small,color={rgb, 255:red, 208; green, 2; blue, 27 }  ,opacity=1 ] [align=left] {$\displaystyle z_{\alpha }\!+_{1}\! z_{\gamma }$};
\draw (214.02,193.78) node [anchor=north west][inner sep=0.75pt]  [font=\small,color={rgb, 255:red, 208; green, 2; blue, 27 }  ,opacity=1 ] [align=left] {$\displaystyle z_{\beta }\!+_{1}\! z_{\delta }$};
\draw (216.11,274) node [anchor=north west][inner sep=0.75pt]  [font=\small,color={rgb, 255:red, 208; green, 2; blue, 27 }  ,opacity=1 ] [align=left] {$\displaystyle x_{1} \!+ \!x_{2}$};
\draw (440.2,162.47) node [anchor=north west][inner sep=0.75pt]  [font=\small,color={rgb, 255:red, 208; green, 2; blue, 27 }  ,opacity=1 ] [align=left] {$\displaystyle y_{1} +y_{2}$};
\end{tikzpicture}
}
\end{center}
\end{itemize}

\subsubsection{Action of $\G_m$}

We briefly describe the action of $\Gm$ on the Poincar\'e torsor. To this end, we let $e_{J_K}\in \Hom_{J_K}(J_K, P_K^\vee)$ and $e_{J_K^\vee}\in \Hom_{J_K^\vee}(J_K^\vee, P_K^\times)$ denote the identity sections of $P_K^\times$ viewed as a $J_K$-group scheme and a $J_K^\vee$-group scheme respectively. 
Restricting the short exact sequence (\ref{ext1}) of commutative $J_K^\vee$-group schemes via the identity section $\spec K \lra J_K^\vee $ yields a short exact sequence of commutative $K$-group schemes 
\[\begin{tikzcd}[bend angle = 20]
1 \arrow{r}
& \G_{m, K} \arrow{r}
& P^\times_{K} \vert_{J_K\times \{ 0 \}} \arrow{r}
& J_K \arrow{r} \arrow[bend right,end anchor={[shift={(0pt,-6pt)}]}]{l}[swap]{e_{J_K}}
& 0,
\end{tikzcd}\]
which is split by the section $e_{J_K}$. In particular, we have 
$ P^\times_{K} \vert_{J_K\times \{ 0 \}}=\G_{m, J_K}=\G_{m, K}\times J_K$. By a similar reasoning using the identity section $e_{J_K^\vee}$, we also have 
$P^\times_{K} \vert_{\{ 0 \} \times J_K^\vee}=\G_{m, J_K^\vee}. $
These canonical splittings  allow for a useful description of the $\G_m$-action on $P^{\times}_K$ in terms of the partial group laws $+_2$ and $+_1$. For a $(J_K\times J_K^\vee)$-scheme $S$, consider $t \in P_K^{\times}(S)$ and $u \in \G_m(S)$, and let $(x, y)$ be the image of $t$ in $(J_K \times J_K^{\vee})(S)$. Consider a point $v=v_{x, u} \in P_K^{\times}(S)$ lying over $(x, 0)$ and corresponding to $(u, 0)$ under the identification ${P_K^{\times}}\vert_{J_K \times \{0\}}(S) \simeq \G_m(S) \times J_K(S)$. 
The action of $u$ on the point $t$ is then given by
\begin{equation}\label{rem:internal-Gm-action}
u \cdot t=v+_2 t.
\end{equation}
The point $v_{x, u}$ does not depend on $t$, only on $x$ and $u$. The change of $v_{x, u}$ in the parameter $x$ is described by the relative group law $+_1,$ namely $v_{x_1+x_2, u}=v_{x_1, u}+_1v_{x_2, u}$. Similarly, we have $v_{x, u_1u_2}=v_{x, u_1}+_2v_{x, u_2}$.

Instead of using the point $(x, 0)$, one could work with $(0, y)$ and the operation $+_1$. These two points of view are equivalent by the compatibility between $+_1$ and $+_2$. As a consequence, the $\G_m$-action commutes with the operations $+_1$ and $+_2$ in the following sense. Given two points $a, b \in P_K^{\times}(S)$ lying over points of the form $(x, *)$ in $J_K\times J_K^{\vee}(S)$, 
and $u, u' \in \G_m(S),$ we have
\begin{equation}\label{GmAndGrpLawsCommute}
\begin{aligned}
(u \cdot a)+_2(u' \cdot b)& = (v_{x, u}+_2 a)+_2(v_{x, u'}+_2 b)\\
& =(v_{x, u}+_2v_{x, u'})+_2(a+_2b) \\
& =(uu') \cdot (a+_2b).
\end{aligned}
\end{equation}
The same holds for $+_1$.

\subsection{Spreading out the geometry}

In the method of geometric quadratic Chabauty, it is crucial to spread out the geometry to $\oh_K$. Roughly speaking, the reason is that it is necessary to work with finitely generated $\Z$-modules. The module $\G_m(K)=K^\times$ is not finitely generated, whereas $\G_m(\oh_K)=\oh_K^\times$ is. Indeed, if $r_1$ and $r_2$ denote respectively the number of real embeddings and pairs of complex embeddings of $K$, then Dirichlet's Unit Theorem implies that 
$
\delta:=\rank_\Z \oh_K^\times = r_1+r_2-1.
$

\subsubsection{Models over $\oh_K$}\label{s:spread}

Let $\CC$ denote a regular proper model of $C_K$ over $\oh_K$. Let $\CC^{\sm}$ denote the smooth locus of $\CC$. 
By properness and regularity respectively, we have the identifications 
$$    C_K(K)=\CC(\oh_K)=\CC^{\sm}(\oh_K) .$$
Let $\J$ and $\J^{\vee}$ denote the N\'eron models over $\oh_K$ of $J_K$ and $J_K^{\vee}$ respectively. Denote by $\J^{\circ}$ and $\Jo$ the fiberwise connected components of $0$ in $\J$ and $\J^{\vee}$ respectively. The quotient $\J^\vee/\J^{\vee, \circ}$ is an \'etale group scheme over $\oh_K$ with finite fibers. 

Suppose that $C_K(K)$ is non-empty, and let $b\in C_K(K)$ be a fixed  rational point. Such a choice yields an Abel--Jacobi embedding $j_b : C_K\hookrightarrow J_K$, by sending a point $x$ to the linear equivalence class of the divisor $(x)-(b)$.
The map $j_b$ extends uniquely to a morphism 
   $$ j_b: \CC^{\text{sm}}\lra \J $$
over $\oh_K$ by the N\'eron Mapping Property. The extension of the Poincar\'e biextension to $\spec \oh_K$ is supplied by work of Grothendieck. 

\begin{proposition} 
The Poincar\'e torsor $P_K^{\times}$ extends uniquely to a biextension $\PP^\times$ of $(\J, \Jo)$ by $\G_m$. In particular, given an $\oh_K$-scheme $S$ and two points $(x, y), (x, y') \in \J\times \Jo(S)$, there is an isomorphism 
\begin{equation}\label{isom:cube}
 (x, y)^* \PP \otimes (x, y')^* \PP \simeq (x, y+ y')^* \PP,
 \end{equation}
where $\PP$ is the line bundle over $\J \times \Jo$ corresponding to $\PP^\times$. 
\end{proposition}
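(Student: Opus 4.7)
The plan is to invoke Grothendieck's extension theorem for biextensions from \cite[VIII]{SGA7}, which is tailored exactly to this situation. The input is the Poincar\'e biextension $P_K^\times$ on the generic fiber; the output is an extension to a $\mathbb{G}_m$-biextension $\mathbf{P}^\times$ of $(\mathbf{J}, \Jo)$ over $\oh_K$. The reason one must work with $\Jo$ rather than $\J^\vee$ is that biextensions pull back along the identity section to trivial extensions; combined with the fact that the N\'eron model of $\mathbb{G}_m$ over a local ring is connected, this forces the second argument to land in the fiberwise connected component of identity.

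For uniqueness, I would argue as follows. Suppose $\mathbf{P}^\times_1$ and $\mathbf{P}^\times_2$ are two such extensions with a chosen identification on the generic fiber. The $\G_m$-torsor $\underline{\mathrm{Isom}}(\mathbf{P}_1^\times, \mathbf{P}_2^\times)$ on $\J \times \Jo$ (in the sense of biextensions) is trivial on the generic fiber, hence corresponds to a section of a smooth group scheme over $\J \times \Jo$ whose generic value is identity. Since $\J \times \Jo$ is smooth over $\oh_K$ (and in particular, sections of smooth $\oh_K$-group schemes are determined on a dense open by separatedness), this section extends uniquely, giving the desired isomorphism. Equivalently, one uses the N\'eron mapping property together with the rigidity of biextensions, which asserts that the automorphism group of any biextension is trivial.

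For existence, one uses the interpretation of $\Jo$ as the identity component of the Picard scheme of $\J$ in an appropriate sense. By the universal property of the Poincar\'e bundle, rigidified line bundles on $\J$ that are algebraically equivalent to zero fiberwise are classified by maps to $\Jo$, and pulling back the universal object along the identity map of $\Jo$ produces the line bundle $\PP$ on $\J \times \Jo$. Restricting further to the $\G_m$-torsor $\mathbf{P}^\times$ of nowhere-vanishing sections endows it with the two partial composition laws $+_1$ and $+_2$ by first constructing them on the generic fiber (from $P_K^\times$) and then extending via the Neron mapping property applied to the smooth $\oh_K$-schemes $\mathbf{P}^\times \times_{\J} \mathbf{P}^\times$ and $\mathbf{P}^\times \times_{\Jo} \mathbf{P}^\times$, whose generic points are dense. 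Their compatibility likewise spreads out from the generic fiber.

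The isomorphism \eqref{isom:cube} is then tautologically part of the biextension structure: a $\G_m$-biextension of $(\J, \Jo)$ is by definition a line bundle on $\J \times \Jo$ equipped with partial group laws fiberwise in each variable, which is equivalent data to a rigidified line bundle satisfying the theorem-of-the-cube identity in both variables separately. The main obstacle, conceptually, is the identification of the correct source of the second variable; in practice all the heavy lifting is done by Grothendieck's theorem, and our task reduces to checking that his hypotheses apply, namely that $J_K$ is an abelian variety and $\J, \Jo$ are the usual N\'eron-type models.
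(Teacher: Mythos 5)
You take the same route as the paper, which simply cites \cite[VIII.\ Theorem~7.1(b) \& Remark~7.2]{SGA7} and lets Grothendieck do all the work. One of your supporting heuristics is off, however: the N\'eron lft-model of $\mathbb{G}_m$ over a discrete valuation ring is \emph{not} connected --- its component group is $\mathbb{Z}$, via the valuation. In the statement, $\mathbb{G}_m$ is just the ordinary split torus over $\oh_K$; the actual reason $\J^\vee$ must be replaced by $\Jo$ is that fiberwise connectedness of $\Jo$ is precisely what makes the restriction functor on biextensions fully faithful (giving uniqueness), as the paper notes immediately after the proposition, while existence of the extension is governed by the vanishing of Grothendieck's monodromy pairing, which is the content of SGA~7 VIII Remark~7.2 for Jacobians. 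Your sketches of uniqueness via rigidity and existence via the Picard functor are reasonable informal glosses, but since both you and the paper ultimately defer entirely to Grothendieck's theorem, the two proofs are essentially identical.
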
 

\begin{proof}
This is \cite[VIII. Theorem 7.1(b) \& Remark~7.2]{SGA7}. 
\end{proof}

We denote the structure morphism of this $\G_m$-torsor by 
$j : \PP^{\times}\lra \J\times \Jo. $
The uniqueness of the extension follows from the connectedness of $\Jo$. We remark that the commutative group scheme extension structures and their compatibilities discussed in Section \ref{S:PoincareTorsor} extend to the integral version $\PP^\times$.

\subsubsection{Integral points on the Poincar\'e torsor}\label{S:lifting}

We show how to lift certain integral points on $\J \times \Jo$ across the structure map $j: \PP^\times \lra \J \times \Jo$. 
Let $(x, y)$ be an $\oh_K$-point of $\J\times \Jo$, and let $(x, y)^* \PP^{\times}$ be the pull-back of $\PP^{\times}$ to $\oh_K$, as pictured in the Cartesian diagram
\begin{equation}\label{diag:lift}
\begin{tikzcd}
(x, y)^* \PP^\times\arrow{r}\arrow{d} \arrow[dr, phantom, "\square"]
  & \PP^\times  \arrow{d} \\
  \spec \oh_K \arrow{r}{(x, y)}
  & \J\times  \Jo.
\end{tikzcd}
\end{equation}
Lifting the point $(x, y)$ to $\PP^\times$ amounts to finding a section of the torsor $(x, y)^* \PP^\times \lra \spec \oh_K$. 

In the case $K=\Q$, all $\Gm$-torsors are trivial over $\spec \Z$ and thus admit a section over $\Z$, which is unique up to $\G_m(\Z)=\{ \pm 1 \}$. A lift of the integral point $(x, y)$ to $\PP^\times$ therefore always exists. 
 In the case of a general number field $K$, it is not always possible to lift an $\oh_K$-point $(x, y)$ of $\J\times \Jo$ to $\PP^\times$ when the class number $h$ of $K$ is non-trivial. However, the previous argument carries over to $\oh_K$-points of the form $(x, h\cdot y)$. 
\begin{lemma}\label{lem:h}
Any $\oh_K$-point of $\J\times \Jo$ of the form $(x, h\cdot y)$ with $(x, y)\in \J\times \Jo (\oh_K)$ admits a lift to an $\oh_K$-point of the Poincar\'e torsor $\PP^\times$. This lift is unique up to multiplication by an element of $\oh_K^\times$.
\end{lemma}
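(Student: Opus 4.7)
The plan is to use the biextension structure on $\PP^\times$ (specifically the partial group law $+_1$ or equivalently the cube isomorphism \eqref{isom:cube}) together with the finiteness of the class group $\Cl(\oh_K)$ of order $h$.

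First, I would observe that for any $\oh_K$-point $(x, y)\in(\J\times \Jo)(\oh_K)$, the pullback $(x, y)^*\PP^\times$ is a $\G_m$-torsor over $\spec \oh_K$. Such torsors are classified by $\Pic(\spec \oh_K)=\Cl(\oh_K)$, an abelian group of order $h$. Let $[\cL_{x, y}]\in \Cl(\oh_K)$ denote the class of the line bundle $(x, y)^*\PP$ corresponding to this torsor.

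Next, I would iterate the cube isomorphism \eqref{isom:cube}: applying it $h-1$ times (pulling back along $\spec \oh_K \xrightarrow{(x, y)} \J\times \Jo$) yields a canonical isomorphism
\[
(x, hy)^*\PP \; \simeq \; \bigl((x, y)^*\PP\bigr)^{\otimes h}
\]
of line bundles over $\oh_K$. Equivalently, in terms of the class group,
\[
[\cL_{x, hy}] \; = \; h \cdot [\cL_{x, y}] \; = \; 0 \:\:\text{ in } \Cl(\oh_K).
\]
Consequently the $\G_m$-torsor $(x, hy)^*\PP^\times \lra \spec \oh_K$ is trivial, and hence admits a global section, which is precisely a lift of $(x, hy)$ to $\PP^\times(\oh_K)$.

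For the uniqueness statement, I would use that the set of sections of a trivial $\G_m$-torsor over $\spec \oh_K$ is a torsor under $\G_m(\oh_K)=\oh_K^\times$, acting simply transitively. Thus any two lifts of $(x, hy)$ differ by multiplication by a unique element of $\oh_K^\times$. The only step requiring any care is verifying that iterating the cube isomorphism \eqref{isom:cube} indeed produces the stated identification $(x, hy)^*\PP\simeq((x, y)^*\PP)^{\otimes h}$ — this is a straightforward induction on $h$ using \eqref{isom:cube} with $y'=y, 2y, \ldots, (h-1)y$, and is the only non-formal input beyond invoking $|\Cl(\oh_K)|=h$.
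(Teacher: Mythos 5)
Your proposal is correct and follows essentially the same argument as the paper: iterate the cube isomorphism \eqref{isom:cube} to identify $(x,hy)^*\PP$ with $\bigl((x,y)^*\PP\bigr)^{\otimes h}$, then use $|\Pic(\oh_K)|=h$ to conclude the torsor is trivial. You spell out the uniqueness step (simply transitive $\oh_K^\times$-action on sections of a trivial $\G_m$-torsor) a bit more explicitly than the paper, but this is the same reasoning.
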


\begin{proof}
We repeatedly apply (\ref{isom:cube}) 
to obtain an isomorphism
$
    ((x, y)^* \PP)^{\otimes h} \simeq (x, h\cdot y)^* \PP
$
of line bundles over $\spec \oh_K$. It follows that $(x, h\cdot y)^* \PP^\times$ is trivial as a $\G_m$-torsor over $\oh_K$, since $\Pic (\oh_K)$ has size $h$. 
\end{proof}


\section{Construction of the torsor $\TT$}\label{s:consT}

The goal of this section is to construct a certain $\G_m^{\rho-1}$-torsor $\TT$ over $\J$ along with a lift of the Abel--Jacobi map $j_b : \CC^{\sm} \lra \J$. Here $\rho$ denotes the rank of the N\'eron-Severi group of $J_K$. We begin by constructing the corresponding torsor $T_K$ over $J_K$, and then proceed to spread out the geometry. Once the torsor $\TT$ has been defined, we construct the desired lift of the Abel--Jacobi map.

\subsection{Trivialization of the Poincar\'e torsor} \label{trivialization}

Let  $\lambda : J_K\overset{\sim}{\lra} J_K^\vee$ be the canonical principal polarization defined in Section \ref{S:PicBun}. By functoriality of $\Pic$, we have a commutative diagram of commutative $K$-group schemes with exact rows
\begin{equation}\label{diag:NS}
\begin{tikzcd}[bend angle = 20]
  0 \arrow{r}
  & J_K^\vee \arrow{r} \arrow{d}{-\lambda^{-1}}[swap]{\wr}
  & \Pic_{J_K/K} \arrow{r}{\pi} \arrow{d}{j_b^*}
  & \NS_{J_K/K} \arrow{r} \arrow{d}{j_{b, \NS}^*}
  & 0 \\
  0 \arrow{r}
  & J_K \arrow{r}
  & \Pic_{C_K/K} \arrow{r}{\deg}
  & \Z_K \arrow{r}
  & 0.
\end{tikzcd}
\end{equation}
Here $\NS_{J_K/K}$ denotes the N\'eron--Severi group scheme of $J_K$, i.e., the \'etale $K$-group scheme of components of the Picard scheme associated to $J_K$. Note that we have used the fact that the map induced by $j_b$ on $\Pic^0$ via pull-back agrees with $-\lambda^{-1}$. It is in particular an isomorphism. 

Let $\Hombf(J_K, J_K^\vee)^{+} \subset \Hombf(J_K, J_K^\vee)$ denote the closed subgroup scheme of self-dual homomorphisms. We refer to \cite[Proposition 7.14 \& \S 7.18]{EGM} for questions of representability. There is a map 
$$
    \varphi : \Pic_{J_K/K}\lra \Hombf(J_K, J_K^\vee)^+, 
$$
defined by sending the class of a line bundle $L$ to the map $\varphi_L$, which maps a closed point $x\in J_K$ to $[\tr_x^* L\otimes L^{-1}]$, where $\tr_x : J_K\lra J_K$ denotes the translation by $x$. The kernel of $\varphi$ is equal to $\Pic^0_{J_K/K}=J_K^\vee$, and the map $\varphi$ induces an isomorphism of $K$-group schemes \cite[Corollary 11.3]{EGM}
\begin{equation}\label{map:phitilde}
    \tilde{\varphi} : \NS_{J_K/K}\overset{\sim}{\lra} \Hombf(J_K, J_K^\vee)^+.
\end{equation}

\begin{definition} 
At the level of $K$-points, we define the group $\Hom(J_K, J_K^\vee)_0^+$ to be 
$$
\Hom(J_K, J_K^\vee)_0^+ := \ker \left( j_{b, \NS}^*\circ \tilde{\varphi}^{-1}: \Hom(J_K, J_K^\vee)^+ \lra \Z \right). 
$$ 
\end{definition}

\begin{proposition}\label{prop:trivtorsor}
For all $f\in \Hom(J_K, J_K^\vee)_0^+$, there exists a unique element $c_f\in J_K^\vee(K)$ with the property that the $\G_m$-torsor
$
j_b^*(\id, \tr_{c_f}\circ f)^* P_K^\times
$
over $C_K$ is trivial.
In particular, for all $n\in\Z_{\ge 1}$, its $n^\text{th}$ power $j_b^*(\id, n\cdot \circ \tr_{c_f}\circ f)^* P_K^\times$ is also trivial. 
\end{proposition}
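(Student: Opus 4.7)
The plan is to exploit the $\G_m$-biextension structure of $P_K^\times$ to decompose the pullback, and then apply diagram \eqref{diag:NS} to reduce the triviality question to a linear equation in $\Pic^0(C_K)(K) \cong J_K(K)$ whose unique solution defines $c_f$.

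First, I would view $c \in J_K^\vee(K)$ as the constant morphism $J_K \lra J_K^\vee$, so that $\tr_c \circ f$ is the pointwise sum $f + c$ of morphisms $J_K \lra J_K^\vee$. The biextension structure of $P_K^\times$ recalled in Section \ref{S:PoincareTorsor}, applied in the $J_K^\vee$-variable (the analogue over $K$ of the integral formula \eqref{isom:cube}), then yields a canonical isomorphism of $\G_m$-torsors on $J_K$
\begin{equation*}
(\id, \tr_c \circ f)^* P_K^\times \;\simeq\; (\id, f)^* P_K^\times \;\otimes\; (\id, c)^* P_K^\times,
\end{equation*}
where $\otimes$ denotes the Baer sum of $\G_m$-torsors over $J_K$. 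Pulling back via $j_b : C_K \lra J_K$ gives an analogous decomposition on $C_K$.

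Next, I would identify each factor in $\Pic(C_K)$ via \eqref{diag:NS}. For the first factor, the definition of $\Hom(J_K, J_K^\vee)_0^+$ through $\tilde{\varphi}^{-1}$ (see \eqref{map:phitilde}) combined with \eqref{diag:NS} ensures that $j_b^*(\id, f)^* P_K$ has degree zero, so $L_f := [j_b^*(\id, f)^* P_K^\times]$ lies in $\Pic^0(C_K)(K) = J_K(K)$. For the second factor, the universal property of the Poincar\'e bundle identifies $[(\id, c)^* P_K]$ with the image of $c$ under $J_K^\vee \hookrightarrow \Pic_{J_K/K}$, and the left square of \eqref{diag:NS} then gives $[j_b^*(\id, c)^* P_K^\times] = -\lambda^{-1}(c) \in J_K(K)$. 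Consequently, $j_b^*(\id, \tr_c \circ f)^* P_K^\times$ is trivial if and only if $L_f - \lambda^{-1}(c) = 0$ in $J_K(K)$, and since $\lambda$ is an isomorphism, the unique solution is $c_f := \lambda(L_f) \in J_K^\vee(K)$, establishing both existence and uniqueness.

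For the power assertion, the morphism $[n]\circ \tr_{c_f}\circ f$ is the $n$-fold sum in $\Hom(J_K, J_K^\vee)$ of $\tr_{c_f}\circ f$, and a further application of the biextension structure in the $J_K^\vee$-variable yields
\begin{equation*}
(\id, [n]\circ \tr_{c_f}\circ f)^* P_K^\times \;\simeq\; \bigl((\id, \tr_{c_f}\circ f)^* P_K^\times\bigr)^{\otimes n},
\end{equation*}
whose pullback along $j_b$ is the $n$-fold Baer sum of a trivial torsor, hence trivial. I do not foresee a serious obstacle; once the biextension decomposition is in place and \eqref{diag:NS} is invoked, the argument reduces to linear algebra in $\Pic^0(C_K)$, with the only mildly subtle point being the compatibility of the biextension isomorphism with the canonical rigidifications, which is built into the biextension structure of $P_K^\times$ recalled in Section \ref{S:PoincareTorsor}.
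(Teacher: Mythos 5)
Your argument is correct, and it takes a genuinely different route from the paper's proof. The paper first defines two splittings $s_1, s_2$ of the map $\pi : \Pic_{J_K/K} \to \NS_{J_K/K}$ restricted to $\Hom(J_K,J_K^\vee)_0^+$ (one by $f \mapsto [(\id,f)^*P_K^\times]$, the other by inverting $\pi$ on $\ker j_b^*$), sets $c_f := 2 s_2(f) - s_1(f)$, and verifies that $s_1(f)+c_f = 2s_2(f)$ lies in $\ker j_b^*$; the factor $2$ enters because $\tilde\varphi\circ\pi\circ s_1(f) = \varphi_{L_f} = f + f^\vee = 2f$. You instead decompose $(\id,\tr_c\circ f)^*P_K^\times$ via the biextension law in the second variable, pull everything back to $C_K$, and solve the resulting linear equation $L_f - \lambda^{-1}(c) = 0$ directly; this avoids the detour through $s_2$, the factor $2$, and the computation of $\varphi_{L_f}$, and it makes the formula $c_f = \lambda(j_b^*(\id,f)^*P_K)$ appear immediately. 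A short check (comparing $\pi$-values and using that $\pi|_{\ker j_b^*}$ is an isomorphism onto $\ker j_{b,\NS}^*$) confirms that the two definitions of $c_f$ agree. Your reasoning is in fact the content of Remark~\ref{remark:Chow_Heegner}, which the paper presents \emph{after} the proof as an alternative description identifying $c_f$ as a Zhang point; so while it is not the argument the paper runs, it is one the authors endorse. Your handling of the final assertion about $n$-th powers is also fine, since $[n]\circ\tr_{c_f}\circ f$ is indeed the $n$-fold pointwise sum of $\tr_{c_f}\circ f$, and the same biextension decomposition in the second variable applies.
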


\begin{proof}
At the level of $\overline{ K}$-points, the diagram (\ref{diag:NS}) gives rise to the commutative diagram
\begin{equation}\label{diag:NSK}
\begin{tikzcd}[bend angle = 20]
& & &  \Hom(J_K, J_K^\vee)_0^+ \arrow[d, hook] \arrow[ddl, dashed, bend right, start anchor={[shift={(0pt,-3pt)}]}, end anchor={[shift={(3pt,0pt)}]}, near start, "s_1"] \arrow[ddl, dashed, bend right, swap, near start, "s_2"] \\
&  & \ker (j_{b,\overline{ K}}^*) \arrow[r, swap, "\sim"] \arrow[hook]{d}
  & \ker (j_{b, \overline{K}, \NS}^*) \arrow[hook]{d} 
\\
  0 \arrow{r}
  & J_K^\vee(\overline{K}) \arrow{r} \arrow{d}{-\lambda^{-1}}[swap]{\wr}
  & \Pic(J_{\overline{K}}) \arrow{r}{\pi} \arrow{d}{j_{b, \overline{K}}^*}
  & \NS_{J_K/K}(\overline{K}) \arrow{r} \arrow{d}{j_{b,\overline{K}, \NS}^*}
  & 0 \\
  0 \arrow{r}
  & J_K(\overline{K}) \arrow{r}
  & \Pic(C_{\overline{K}}) \arrow{r}{\deg}
  & \Z \arrow{r}
  & 0.
\end{tikzcd}
\end{equation}
The map $\pi$ in the first  short exact sequence of this diagram admits two splittings when restricted to $ \Hom(J_K, J_K^\vee)_0^+$, which is viewed as a subgroup of $\ker (j_{b, \overline{K}, \NS}^*)$ via the map $\tilde \varphi^{-1}$. The first section 
 $$s_1 : \Hom(J_K, J_K^\vee)^+ \lra \Pic(J_{\overline K})$$
  is defined by mapping a self-dual homomorphism $f$ defined over $K$ to the isomorphism class of the $\G_m$-torsor $L_f^\times:=(\id, f)^* P_K^\times$ on $J_K$, which is an element of $\Pic (J_K) \subset \Pic (J_{\overline{K}})$. 
We observe, by\cite[Proposition 11.1]{EGM}, that
$$
\tilde{\varphi}\circ \pi \circ s_1(f)=\varphi_{L_f}=f+f^\vee=2f. 
$$
 The second splitting is given by inverting $\pi$ on $\ker (j_{b, \overline{K}}^*)$, i.e., by 
 $$
s_2 :\Hom(J_K, J_K^\vee)_0^+  \hookrightarrow \ker (j_{b, \overline{K}, \NS}^*)  \overset{\pi^{-1}}{\lra} \ker (j_{b, \overline{K}}^*) \subset \Pic(J_{\overline{K}}).$$
Again the image of $s_2$  lies in $\Pic(J_K).$ Given $f\in \Hom(J_K, J_K^\vee)_0^+$ we define 
$$
c_f:=2s_2(f)-s_1(f)\in \Pic (J_K).
$$
As $ c_f \in \ker (\pi)$, we thus have $c_f \in J_K^{\vee} (K)$. We observe that for a line bundle $L$ on $J_K$ corresponding to a closed point $x \in J_K^\vee$,  we have
$$(\id, f)^* \Big( (\id \times \tr_{x})^* P_K  \Big) \simeq (\id, f)^* \big( P_K \otimes \textup{pr}_1^* L \big) \simeq (\id, f)^* P_K \otimes L,$$
where $\textup{pr}_1$ is the projection $J_K \times J_K^\vee \lra J_K$. 
By construction, $c_f$ is therefore the unique element in $J_K^\vee(K)$ such that
$$
s_1(f)+c_f=[(\id, \tr_{c_f}\circ f)^* P_K^\times]\in \ker j_{b}^*.
$$
\end{proof}

\begin{remark}\label{remark:Chow_Heegner}
Proposition \ref{prop:trivtorsor} defines a homomorphism $\Hom(J_K, J_K^\vee)_0\lra J_K^\vee(K)$ mapping $f$ to the point $c_f$. The association takes the line bundle $L_f:=(\id, f)^*P_K$ on $J_K$ and pulls its back to the degree $0$ line bundle $j_{b}^*L_f$ on $C_K$. The latter is naturally an element of $J_K(K)$. Then $c_f:=\lambda(j_{b}^*L_f)\in J_K^\vee(K)$. Indeed, $L_f+c_f\in \Pic(J_K)$ satisfies $j_b^*(L_f+c_f)=j_b^*L_f-\lambda^{-1}(c_f)=0$. With this description in hand, it becomes apparent that $c_f$ is a Zhang point, as defined in \cite[Definition 3.3.1]{daubthesis}. In the notation therein, $c_f=P(L_f, \lambda, \id_{J_K}, b)$. It follows that $2c_f$ is a Chow--Heegner point associated with the modified diagonal cycle on $(C_K)^3$ based at $b$ \cite[Proposition 3.3.3]{daubthesis}. For details about Chow--Heegner points attached to diagonal cycles we refer to \cite{drs, ddlr}. Such points naturally arise in the context of bounding rational points on modular curves in \cite{dogralefourn}. Computing the points $c_f$, which is required for any explicit implementation of the geometric method of this paper, is of independent interest.    
\end{remark}

The group $\NS_{J_K/K}(K)$ is a finitely generated free $\Z$-module whose rank is denoted by $\rho$ and called the Picard number of $J_K$. The kernel 
$\ker (j_{b, \NS}^*: \NS_{J_K/K} (K) \lra \Z)$ is a free $\Z$-module of rank $\rho-1$, and so is the group $\Hom(J_K, J_K^\vee)_0^+$.

\begin{notation} \label{notation:f_i} 
\hfill
\begin{itemize} 
\item Let $f_1, \ldots, f_{\rho-1}$ be a basis of $\Hom(J_K, J_K^\vee)_0^+$. 
\item For each $i=1, \ldots, \rho-1$, let $c_i:=c_{f_i}\in J_K^\vee(K)$ be the element corresponding to $f_i$ in Proposition \ref{prop:trivtorsor}.
\item For each integer $n\in \mathbb{Z}_{\ge 1}$, denote by $\alpha_{n, i, K}$ the map 
$$
\alpha_{n, i, K}: J_K \xrightarrow{(\id, n\cdot \circ \tr_{c_i} \circ f_i)} J_K\times J_K^\vee.
$$
\end{itemize}
\end{notation}

\begin{definition}\label{cons:lifting_over_K} By Proposition \ref{prop:trivtorsor}, the pull-back $j_b^* \big( \alpha_{n, i, K}^* P_K^\times \big)$ is a trivial $\G_m$-torsor over $C_K$. 
In other words, it admits a section over $C_K$. This gives rise to a lift of $j_b$, unique up to $K^\times$, which we shall fix and denote by $\tilde{j}_{b}^{(n, i)}$. This is pictured in the diagram
\begin{equation}\label{diag:AJliftK}
\begin{tikzcd}[bend angle = 20]
  & \alpha_{n, i, K}^* P_K^\times  \arrow{r} \arrow{d} \arrow[phantom]{dr}{\square}
  & P_K^\times \arrow{d} \\
  C_K \arrow{r}{j_b} \arrow[dashrightarrow]{ru}{\tilde{j}_{b}^{(n, i)}}
  & J_K \arrow{r}{\alpha_{n, i, K}}
  & J_K\times J_K^\vee.
\end{tikzcd}
\end{equation}
\end{definition}

\subsection{Definition of the torsor}

We introduce and recall some notations, and refer the rest to Section \ref{s:spread}. Let $\mathfrak{n}$ be the product of prime ideals in $\oh_K$ such that $\CC$ is smooth away from $\spec(\oh_K/\mathfrak{n})$. Let $\Phi^\vee = \J^\vee / \Jo$ be the group scheme of connected components of $\J^\vee$. It is trivial outside $\oh_K/\mathfrak{n}$ with finite \'etale fibers over $\oh_K/\mathfrak{n}$. Let $m$ denote the least common multiple of the exponents of $\Phi^\vee(\bar{\F}_\mathfrak{q})$ over all prime ideals $\mathfrak{q}$ of $\oh_K$. Finally, recall that $h$ denotes the class number of $K$. 

By the N\'eron Mapping Property, for each $i\in \{ 1, \ldots, \rho-1 \}$, the maps
\[
\begin{cases}
f_i : J_K\lra J_K^\vee & \\
\tr_{c_i} : J_K^\vee \lra J_K^\vee & \\
hm\cdot : J_K^\vee \lra J_K^\vee &
\end{cases} \qquad \text{ extend uniquely to } \qquad
\begin{cases}
f_i : \J\lra \J^\vee & \\
\tr_{c_i} : \J^\vee \lra \J^\vee & \\
hm\cdot : \J^\vee \lra \J^\vee. &
\end{cases}
\]
The map $\alpha_{hm, i, K} : J_K\lra J_K\times J_K^\vee$ therefore extends uniquely to a map of $\oh_K$-schemes 
$$\alpha_{hm, i} = (\id, hm\cdot \circ \tr_{c_i} \circ f_i): \J\lra  \J\times \J^\vee.$$  
The integer $m$ is chosen so that the image of this map lies in $\J\times \Jo$. 

\begin{definition} \label{map:alpha} 
Taking the product over $i\in \{ 1, \ldots, \rho-1\}$, we obtain the $\oh_K$-morphism
\begin{equation*} 
    \alpha =(\id, ({hm\cdot} \circ \tr_{\underline{c}} \circ \underline{f})) 
    :=(\id, ({hm\cdot} \circ \tr_{c_i} \circ f_i)_{i=1}^{\rho-1}) : \J \lra \J\times (\Jo)^{\rho-1}.
\end{equation*}
\end{definition}

Consider the map $\PP^\times \lra \J \times \Jo \lra \J$ defined as the composition of the structure map $j$ with the first projection. Using this morphism, we form the $(\rho-1)$-fold self-product 
$$
    \PP^{\times, \rho-1}:= \PP^\times \times_{\J} \ldots \times_{\J} \PP^\times.
$$
We naturally have a morphism
$    \PP^{\times, \rho-1} \lra \J\times (\Jo)^{\rho-1},$
which endows $\PP^{\times, \rho-1}$ with the structure of a $\G_m^{\rho-1}$-torsor over $\J\times (\Jo)^{\rho-1}$.
This leads to the following key construction. 
\begin{definition}
Define the $\G_m^{\rho-1}$-torsor $\TT$ over $\J$ to be the pull-back of the $\G_m^{\rho-1}$-torsor $\PP^{\times, \rho-1}$ over $\J\times (\Jo)^{\rho-1}$ by the map $\alpha$, i.e.,
\begin{equation*}
    \TT:= \PP^{\times, \rho-1}\times_{\alpha} \J=\alpha^* \PP^{\times, \rho-1}  = 
    (\id, hm\cdot \circ \tr_{c_1} \circ f_1)^* \PP^\times\times_{\J} \ldots \times_{\J}(\id, hm\cdot \circ \tr_{c_{\rho-1}} \circ f_{\rho-1})^* \PP^\times.
\end{equation*}
\end{definition}

\subsection{Lifting the Abel--Jacobi map}

By taking the product over $i$ of the lifts $\tilde{j}_b^{(hm, i)}$ of Definition \ref{cons:lifting_over_K}, we obtain a lift $\tilde{j}_b$ of $j_b$ to $T_K:=\TT\times_J J_K$, as pictured in the commutative diagram
\begin{equation}\label{diag:AJliftTK}
\begin{tikzcd}[bend angle = 20]
  & T_K  \arrow{r} \arrow{d} \arrow[phantom]{dr}{\square}
  & P_K^{\times, \rho-1} \arrow{d} \\
  C_K \arrow{r}{j_b} \arrow[dashrightarrow]{ru}{\tilde{j}_{b}}
  & J_K \arrow{r}{\alpha_K}
  & J_K\times (J_K^{\vee, 0})^{\rho-1},
\end{tikzcd}
\end{equation}
where $\alpha_K$ denotes the base change of the map $\alpha$  to $K$.

The goal is to extend this diagram over $\oh_K$. However, lifting the map $j_b : \CC^{\sm}\lra \J$ to the torsor $\TT$ is generally not possible, the problem being that the fibers $C^{\sm}_{\F_\mathfrak{q}} := \CC^{\sm} \times_{\spec \oh_K} \spec {\F_{\mathfrak{q}}}$, for primes $\mathfrak{q}\vert \mathfrak{n}$, may contain too many components. To remedy this, we consider one geometrically irreducible component in each such fiber at a time.  

\begin{definition} \label{cons:U}
Let $\U \subset \CC^{\sm}$ be an open subscheme obtained by removing, for every $\mathfrak{q} \vert \mathfrak{n}$, all but one irreducible component of $C^{\sm}_{\F_\mathfrak{q}}$ that is furthermore geometrically irreducible. We will lift the map $j_b$ to a map $\tilde j_b^U: \U \lra \TT$ for each such open subscheme $\U$.   
\end{definition}

\begin{remark}\label{remark:U} We first remark that  such a subscheme $\U$ exists under the assumption that $C_K$ admits a $K$-rational point. Secondly, for the purpose of determining the set of rational points $C_K (K) = \CC^{\sm} (\oh_K)$, it suffices to consider subschemes of the form $\U$, as there are finitely many of them and each point in $\CC^{\sm}(\oh_K)$ lies in exactly one such $\U$. Both remarks follow from the following simple lemma. 
\end{remark}

\begin{lemma}
Let $X$ be an irreducible variety over a field $k$ that admits a smooth $k$-rational point. Then $X$ is geometrically irreducible. 
\end{lemma}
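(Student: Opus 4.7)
The plan is to combine smoothness at the rational point (which pins down a unique local branch after base change) with Galois descent from a separable closure down to $k$.

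Let $k^{s} \subset \bar k$ be a separable closure inside an algebraic closure, and set $G := \Gal(k^{s}/k)$. The natural map $X_{\bar k} \to X_{k^{s}}$ is a universal homeomorphism (it is integral, surjective and radicial), so it induces a bijection on irreducible components. It therefore suffices to prove that $X_{k^{s}}$ is irreducible.

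Let $x \in X(k)$ be the given smooth $k$-point. Being $k$-rational, $x$ has a unique preimage $\bar x$ in $X_{k^{s}}$. Smoothness is preserved under base change, so $X_{k^{s}}$ is smooth at $\bar x$; in particular $\mathcal{O}_{X_{k^{s}}, \bar x}$ is a regular local ring, and hence an integral domain. Consequently only one irreducible component of $X_{k^{s}}$ passes through $\bar x$. Next I invoke Galois descent: the group $G$ acts on $X_{k^{s}}$, permuting the (finitely many) irreducible components, and the quotient $X_{k^{s}} \to X$ identifies the $G$-orbits on these components with the irreducible components of $X$. Since $X$ is irreducible, $G$ acts transitively on the set of components of $X_{k^{s}}$. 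But $\bar x$ is $G$-fixed, hence the unique component through $\bar x$ is $G$-stable; transitivity then forces this component to be all of $X_{k^{s}}$.

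The main technical point is the Galois descent identification of orbits of $G$ on components of $X_{k^{s}}$ with components of $X$, which is a standard consequence of faithfully flat descent applied to the integral morphism $X_{k^{s}} \to X$. The hypothesis of smoothness at $x$ (as opposed to mere regularity of $\mathcal{O}_{X,x}$) is essential because it is what survives base change to $\bar k$, ensuring uniqueness of the local branch through $\bar x$.
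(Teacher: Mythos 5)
Your proof is correct but takes a genuinely different route from the paper's. The paper works purely on the level of function rings: it picks a normal affine open $U$ around the smooth rational point, observes that $A = \Gamma(U, \oh_X)$ has a $k$-algebra map to $k$ (evaluation at the point), uses normality to embed the separable algebraic closure $k'$ of $k$ in $k(X)$ into $A$, and concludes $k' = k$; geometric irreducibility then follows from the EGA criterion (EGA IV, Cor.~4.5.10) that $X$ is geometrically irreducible iff $k$ is separably closed in $k(X)$. Your argument is instead Galois-theoretic and topological: you reduce to $X_{k^s}$ by a universal-homeomorphism argument, use regularity of $\mathcal{O}_{X_{k^s}, \bar x}$ at the unique preimage $\bar x$ of $x$ to see there is a single irreducible component through $\bar x$, and then invoke Galois descent (the transitive action of $\Gal(k^s/k)$ on the components of $X_{k^s}$ lying over the unique component of $X$) together with the $G$-fixedness of $\bar x$ to conclude. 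Both proofs hinge on the same phenomenon---that smoothness/normality at a $k$-rational point constrains the separable structure of $X$---but the paper packages this via the evaluation map $A\to k$ and a criterion in terms of the field of constants, whereas you package it via a local branch count and the orbit structure of Galois on components. The paper's version is shorter and more self-contained once the EGA reference is granted; yours makes the Galois mechanism explicit and generalizes readily to non-affine situations without choosing an open subset. Both are valid.
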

 
 
\begin{proof} Let $A = \Gamma(U, \oh_X)$ be the ring of functions on a normal affine open neighborhood $U$ of the smooth rational point. Then $A$ admits a map $A \lra k$ of $k$-algebras. Let $k'$ be the separable algebraic closure of $k$ in the function field $k(X)=\mathrm{Frac}(A)$. Since $U$ is normal, we have $k' \subset A$ which forces $k' = k$. This is equivalent to $X$ being geometrically irreducible by \cite[Corollaire~4.5.10]{EGA4}. 
\end{proof}

The construction of the desired lift of $j_b$ is analogous to the one in \cite[\S 2]{EL19}, except that we pull back $\PP^{\times}$ via morphisms of the form $(\id, {hm\cdot}\circ \tr_c\circ f):\J\lra \J\times \Jo,$ where in the second factor we incorporate an additional multiplication by the class number $h$ to ensure the existence of such a lift.

\begin{proposition}\label{prop:Ulift}
Let $\U$ be an open subscheme of $\CC^{\sm}$ as in Definition \ref{cons:U}. There exists a lift $\tilde{j}_b^U$ of $j_{b}\vert_{\U}$ to $\TT$, unique up to $\oh_K^{\times, \rho-1}$, which makes the following diagram commute:
\begin{equation}\label{diag:AJliftU}
\begin{tikzcd}[bend angle = 20]
  &
  & \TT  \arrow{r} \arrow{d} \arrow[phantom]{dr}{\square}
  & \PP^{\times, \rho-1} \arrow{d} \\
  \U \arrow[hook]{r} \arrow[dashrightarrow]{rru}{\tilde{j}_{b}^U}
  & \CC^{\sm} \arrow{r}{j_b} 
  & \J \arrow{r}{\alpha}
  & \J\times (\Jo)^{\rho-1}.
\end{tikzcd}
\end{equation}
\end{proposition}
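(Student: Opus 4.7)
The plan is to decompose the lift componentwise and then verify triviality of each pulled-back torsor by a Picard computation. Since $\TT = \alpha^* \PP^{\times, \rho-1}$ is the fiber product over $\J$ of the individual $\G_m$-torsors $\alpha_{hm, i}^* \PP^\times$, where $\alpha_{hm, i} := (\id, hm \cdot \circ \tr_{c_i} \circ f_i)$, producing a lift $\tilde j_b^U: \U \lra \TT$ is equivalent to producing, for each $i \in \{1, \ldots, \rho-1\}$, a section over $\U$ of the pulled-back $\G_m$-torsor
$$L_i^\times := j_b|_\U^*\, \alpha_{hm, i}^* \PP^\times.$$
Hence the goal is to show that the associated line bundle $L_i$ on $\U$ is trivial for every $i$.

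The first step exploits the biextension structure. By construction of $m$, the composite $m \cdot \circ \tr_{c_i} \circ f_i$ factors through $\Jo$, so the cube isomorphism \eqref{isom:cube} applies and gives, by induction on $h$, that
$$\alpha_{hm, i}^* \PP \;\cong\; \bigl(\alpha_{m, i}^* \PP\bigr)^{\otimes h}, \qquad \alpha_{m, i} := (\id, m\cdot \circ \tr_{c_i} \circ f_i).$$
Setting $N_i := j_b|_\U^* \alpha_{m, i}^* \PP$ yields $L_i \cong N_i^{\otimes h}$ on $\U$, and by Proposition \ref{prop:trivtorsor} applied with $n = m$, $N_i$ is trivial on the generic fiber $U_K = C_K$. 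The main obstacle is then to control the kernel of the restriction map $\Pic(\U) \lra \Pic(U_K)$. By design, every closed fiber $\U_{\F_\mathfrak{q}}$ is smooth and geometrically irreducible (see Definition \ref{cons:U} and Remark \ref{remark:U}), so the only vertical prime divisors of $\U$ are the fibers themselves, and the class $[\U_{\F_\mathfrak{q}}] \in \Pic(\U)$ is the pullback of $[\mathfrak{q}] \in \Cl(\oh_K)$ along the structure morphism $\U \lra \spec \oh_K$ (using that $\U$ is smooth, hence $\U_{\F_\mathfrak{q}}$ is reduced). The localization exact sequence
$$\bigoplus_{\mathfrak{q}} \Z \cdot [\U_{\F_\mathfrak{q}}] \lra \Pic(\U) \lra \Pic(U_K) \lra 0,$$
valid because $\U$ is regular, therefore identifies the kernel of $\Pic(\U) \lra \Pic(U_K)$ with the image of $\Cl(\oh_K)$, a finite group of exponent dividing $h$. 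Combining with the previous step, $L_i \cong N_i^{\otimes h}$ is trivial on $\U$, furnishing a section $s_i$ of $L_i^\times$, and the product of the $s_i$ yields the desired lift $\tilde j_b^U$.

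For uniqueness, two lifts differ by a section of $\G_m^{\rho-1}$ over $\U$, i.e., an element of $\bigl(\Gamma(\U, \oh_\U)^\times\bigr)^{\rho-1}$. A global unit $f$ on $\U$ restricts to a regular function on the proper curve $C_K$, hence to a constant $c \in K^\times$ by the geometric connectedness of $C_K$. Since $\U_{\F_\mathfrak{q}}$ is non-empty for every prime $\mathfrak{q}$ of $\oh_K$ and $f$ is regular there, $c$ cannot have a pole at any $\mathfrak{q}$, forcing $c \in \oh_K^\times$. Thus $\Gamma(\U, \oh_\U)^\times = \oh_K^\times$, giving the asserted ambiguity $\oh_K^{\times, \rho-1}$. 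The delicate step is the Picard computation in the second paragraph: without the geometric irreducibility of the $\U_{\F_\mathfrak{q}}$ enforced in Definition \ref{cons:U}, extra vertical components at bad primes would introduce classes in $\Pic(\U)$ not coming from $\Cl(\oh_K)$, and the class-number trick of powering $N_i$ by $h$ would fail to trivialize $L_i$.
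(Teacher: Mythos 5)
Your proof is correct and essentially the same as the paper's: you reduce to trivializing each $j_b|_\U^* \alpha_{hm,i}^* \PP^\times$, observe triviality over the generic fiber via Proposition \ref{prop:trivtorsor}, and kill the $h$-torsion kernel of $\Pic(\U)\to\Pic(C_K)$ coming from vertical divisors pulled back from $\Cl(\oh_K)$ — which is precisely the content of the paper's Lemma \ref{lem:PicU}, here packaged via the localization sequence. Your explicit verification that $\Gamma(\U,\oh_\U)^\times=\oh_K^\times$ supplies a detail the paper's lemma asserts without proof.
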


\begin{proof}
The restriction of the torsor
$(\id, m\cdot \circ \tr_{c_i} \circ f_i)^* \PP^\times$ to $\U$ gives an element of $\Pic(\U)$, whose pull-back to $C_K$ equals 
$j_{b}^*\alpha_{m, i, K}^*P_K^{\times}$ and is trivial by Proposition \ref{prop:trivtorsor}. In other words, when restricted to $\U,$ the torsor $(\id, m\cdot \circ \tr_{c_i} \circ f_i)^* \PP^\times$ gives rise to an element of the kernel 
$\ker (\Pic (\U) \lra \Pic (C_K)).$
Note that we have an isomorphism of line bundles 
\begin{equation}
    (\id, hm\cdot \circ \tr_{c_i} \circ f_i)^*\PP  \simeq ((\id, m\cdot \circ \tr_{c_i} \circ f_i)^* \PP)^{\otimes h},
\end{equation}
obtained by using (\ref{isom:cube}). By Lemma \ref{lem:PicU} below, we conclude that $(\id, hm\cdot \circ \tr_{c_i} \circ f_i)^* \PP^\times$ becomes a trivial $\G_{m, \U}$-torsor when restricted to $\U$. It follows that $\TT$ pulls back to the trivial $\G_{m, \U}^{\rho-1}$-torsor over $\U$. In particular, the map $j_b\vert_{\U}$ admits a lift to $\TT$, which is unique up to $\G_m^{\rho-1}(\U)$. The latter is equal to $(\oh_\U(\U)^\times)^{\rho-1}=(\oh_K^{\times})^{\rho-1}$ by Lemma \ref{lem:PicU} below. 
\end{proof}
 
The following lemma is used in the proof above. 

\begin{lemma}\label{lem:PicU}
Let $\U$ be an open subscheme of $\CC^{\sm}$ as in Definition \ref{cons:U}. Then $\oh_{\U}(\U)=\oh_K$ and the kernel of the restriction $\ker(\Pic(\U)\lra  \Pic(C_K))$ is $h$-torsion. In other words, given a line bundle $L$ over $\U$, which is trivial over the generic fiber $C_K$, its $h$-th power $L^{\otimes h}$ is trivial over $\U$. 
\end{lemma}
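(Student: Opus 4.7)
The plan is to prove the two assertions separately, and in both the key technical input will be the \emph{multiplicity-one} property: each chosen component $Y_\mathfrak{q}$ of $C^{\sm}_{\F_\mathfrak{q}}$ appears with multiplicity $1$ in the cycle $\CC_\mathfrak{q}$. Indeed, the generic point $\eta_\mathfrak{q}$ of $Y_\mathfrak{q}$ lies in $\CC^{\sm}$, so the local ring $\oh_{\CC,\eta_\mathfrak{q}}$ is a DVR (regularity) and the morphism $\oh_{K,\mathfrak{q}}\to \oh_{\CC,\eta_\mathfrak{q}}$ is smooth; this forces the fiber $\oh_{\CC,\eta_\mathfrak{q}}/\pi_\mathfrak{q}$ to be reduced and hence a field, so a uniformizer $\pi_\mathfrak{q}$ of $\mathfrak{q}$ remains a uniformizer at $\eta_\mathfrak{q}$. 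In particular, for $f \in K^\times$ we have $v_{\eta_\mathfrak{q}}(f)=v_\mathfrak{q}(f)$.

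For the first claim, note that $\U$ is irreducible (it contains the generic point of the integral scheme $\CC$) and regular (open in $\CC^{\sm}$), so $\Gamma(\U,\oh_\U)$ is a domain inside the function field $K(C_K)$. Any $f \in \Gamma(\U,\oh_\U)$ restricts to a global section of the proper, geometrically connected curve $C_K$, hence to an element of $K$. The multiplicity-one observation above then shows that regularity of $f$ at each $\eta_\mathfrak{q}$ forces $v_\mathfrak{q}(f)\ge 0$ for every prime $\mathfrak{q}$, so $f\in \oh_K$.

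For the second claim, regularity of $\U$ identifies $\Pic(\U)$ with the Weil divisor class group $\Cl(\U)$, and the excision sequence for the open embedding $C_K \hookrightarrow \U$, whose closed complement is covered by the vertical prime divisors $\{Y_\mathfrak{q}\cap \U\}_\mathfrak{q}$, yields
$$ \bigoplus_\mathfrak{q} \Z\,[Y_\mathfrak{q}\cap \U] \;\lra\; \Cl(\U) \;\lra\; \Cl(C_K) \;\lra\; 0. $$
Thus any $L$ in the kernel of $\Pic(\U)\to \Pic(C_K)$ can be written as a finite sum $L=\sum_\mathfrak{q} b_\mathfrak{q} [Y_\mathfrak{q}\cap \U]$. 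Since $\Cl(\oh_K)$ has order $h$, the ideal $\prod_\mathfrak{q} \mathfrak{q}^{hb_\mathfrak{q}}$ is principal, equal to $(g)$ for some $g\in K^\times$. Viewing $g$ as a rational function on $\CC$, its divisor is vertical, and restricting to $\U$ the multiplicity-one property ensures that only the chosen components survive and do so with coefficient $v_\mathfrak{q}(g)=hb_\mathfrak{q}$; that is, $\diiv(g)|_\U=\sum_\mathfrak{q} hb_\mathfrak{q}[Y_\mathfrak{q}\cap \U]=hL$, so $hL$ is principal on $\U$, proving $L^{\otimes h}$ is trivial. The main obstacle is the multiplicity-one statement: without it, one would only obtain torsion of order $h\cdot n_\mathfrak{q}$, and the sharper bound in the lemma relies crucially on having restricted to the smooth locus.
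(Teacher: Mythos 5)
Your proof is correct and follows essentially the same route as the paper's: both rest on regularity of $\U$ (so $\Pic(\U)=\Cl(\U)$), the observation that the special fibers of $\U$ are reduced and irreducible so that every vertical prime divisor is the pullback of a prime of $\oh_K$ with multiplicity one, and the fact that $\Cl(\oh_K)$ has order $h$. Your presentation via the excision sequence and an explicit generator $g$ is a repackaging of the paper's direct use of the pullback map $\Pic(\oh_K)\to\Pic(\U)$; you also make the multiplicity-one argument explicit and supply the proof of the first assertion $\oh_{\U}(\U)=\oh_K$, which the paper's own proof omits.
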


\begin{proof}
By construction, $\U$ is regular and thus locally factorial. We do therefore not distinguish between isomorphism classes of line bundles and Weil divisors. Let $D$ be a vertical divisor on $\U$, i.e., a divisor that does not intersect the generic fiber $C_K$. We claim that $h D = 0$ in $\Pic (\U)$. As every irreducible vertical divisor on $\U$ is of the form $\U_\mathfrak{p}$ for some prime $\mathfrak{p}$ of $\oh_K$,  we may write $h D$ as 
$\sum_{\mathfrak{p}} h n_\mathfrak{p} U_{\F_\mathfrak{p}},$ where $n_{\mathfrak{p}}=0$ for almost all $\mathfrak{p}$. Clearly, $hD$ is the image of the divisor $\sum_{\mathfrak{p}} h n_{\mathfrak p}\mathfrak{p}$ along the natural map $\Pic (\oh_K) \lra \Pic (\U)$, which is trivial since $\Pic (\oh_K)$ has size $h$. 

Now let $D$ be a general element of $\Pic(\U)$, viewed as a Weil divisor on $\U$, that lies in the kernel $\ker (\Pic (\U) \lra \Pic (C_K))$. In other words, the restriction of $D$ to $C_K$ is a principal divisor $D_K = \textup{div} (f)$ for some $f$ in the function field of $C_K$. Then $\textup{div} (f)$ extends to a principal divisor on $\U$, which differs from $D$ only by a vertical divisor. The lemma thus follows.  
\end{proof}

\begin{remark}
When $h = 1$, the lemma simply says that the restriction $\Pic (\U) \lra \Pic (C_K) $ is injective. This map is of course not in general injective when $h\neq 1$.  Indeed, in this case it suffices to take $D=U_{\mathfrak{p}}\in \Div(\U)$, where $\mathfrak{p}$ is a non-principal prime ideal of $\oh_K$.  
\end{remark}


\section{The main theorem}\label{s:method}

In this section, we state a precise version of the main theoretical result of the article. We give a detailed description of the strategy of the geometric quadratic Chabauty method. 

\begin{assumption}\label{passumption} 
 Throughout, we make the following assumptions on the prime $p$. 
\begin{itemize}
\item[(a)] The curve $C_K$ has good reduction at each prime $\mathfrak{p}_1, \ldots, \mathfrak{p}_s$ of $K$ that lies above $p$. 
\item[(b)] Each $\mathfrak{p}_i$ satisfies $e(\mathfrak{p}_i/p) < p-1$. 
\item[(c)] The prime $p$ does not divide $\vert \oh_{K, \mathrm{tors}}^\times \vert$.
\end{itemize}
\end{assumption}
\noindent Note that the first condition is equivalent to requiring that $\mathfrak{p}_i \nmid \mathfrak{n}$ for each $i\in \{ 1, \ldots, s \}$, 
and that Assumption \ref{passumption} excludes only finitely many primes.

\begin{remark}
We comment on the use and necessity of Assumption \ref{passumption}. Condition $(a)$ on good reduction is used in the beginning of Section \ref{s:end} to argue that the tangent map of the lifted Abel--Jacobi map on $\mathfrak{p}$-adic residue disks is injective. It is worth mentioning that the idea of using primes of bad reduction in the Chabauty--Coleman method has been explored in \cite{stoll, KZB, KRZB}. Condition $(b)$ on the ramification degree not being too large is used to deduce that the kernel of reduction $\J(\oh_K)_0$ is a free $\Z$-module (see Section \ref{s:strategy}), and to prove the integrality of certain power series defining formal exponential and logarithm maps (see the proof of Proposition \ref{logexp}). The condition excludes the case $p=2$ even though this case can be particularly interesting, see for instance \cite{poonenstoll}. Similarly, the case $p=2$ is excluded in \cite{EL19}, as well as in the geometric linear Chabauty method \cite{spelierhashimoto}. Condition $(c)$ is used in the proof of Proposition \ref{prop:modify-E-by-local-units2} below. 
\end{remark}


\begin{notation} \label{OKp-OKP-points}
\hfill
\begin{itemize}
    \item Let $\oh_{K, p}:= \oh_{K}\otimes \Z_p$ be the $p$-adic completion of $\oh_K$. 
    It is isomorphic to the product of the $\mathfrak{p}_i$-adic completions $ \oh_{K,\mathfrak{p}_1} \times \ldots \times \oh_{K,\mathfrak{p}_s}.$
    \item Let $\overline{\oh_{K,p}}:=(\oh_K \otimes \mathbb{F}_p)_{\mathrm{red}}\simeq{ \mathbb{F}_{\mathfrak{p}_1}\times \ldots \times \mathbb{F}_{\mathfrak{p}_s}}.$
    \item   For any $\oh_K$-scheme $X$, we have natural identifications
    \[
    X(\oh_{K, p}) =\prod_{i=1}^s X_{\oh_{K,\mathfrak{p}_i}}(\oh_{K,\mathfrak{p}_i}) \qquad \text{ and } \qquad
    X(\overline{\oh_{K, p}}) = \prod_{i=1}^s  X_{\F_{\mathfrak{p}_i}}(\F_{\mathfrak{p}_i}).
    \]
    We denote the natural reduction map by 
    $$ \textup{red}: X(\oh_{K, p})\lra X(\overline{\oh_{K, p}}).$$  
   \item     Given a point $x\in X(\overline{\oh_{K, p}})$, we denote by $X(\oh_{K, p})_x$ the set $\textup{red}^{-1} (x)$, 
  namely the residue disk in $X(\oh_{K, p})$ of points that reduces to the point $x$. Likewise, we denote by $X(\oh_K)_x$ the pre-image of $X(\oh_{K, p})_x$ under the natural inclusion 
  $ X(\oh_K) \longhookrightarrow X(\oh_{K, p}),$
  which consists of rational points in the residue disk $X(\oh_{K, p})_x$. 
      \end{itemize}
 \end{notation}
   
 \begin{remark} \label{remark: all_primes}
The reason for working with all primes above $p$ simultaneously, as opposed to fixing a single prime, is explained in Section \ref{rem:singleprime}. 
 \end{remark}

\subsection{Revisiting the strategy}
Let $\U$ be an open subscheme of $\CC^{\sm}$ as in Definition \ref{cons:U}. Let $u$ be an element in the finite set $\U(\overline{\oh_{K, p}})$, and let
$t:=\tilde{j}_b^{U}(u)\in \TT(\overline{\oh_{K, p}})$
be its image in $\TT$ under the lift $\tilde{j}_b^U : \U \lra \TT$ of Proposition \ref{prop:Ulift}. Note that $\CC^{\sm}(\oh_K)$ is the disjoint union of $\U(\oh_K)$ for the finitely many choices of $\U$'s by Remark \ref{remark:U}, and each $\U(\oh_K)$ is the disjoint union of finitely many residue disks $\U(\oh_K)_u$. Thus, for the purposes of this work, it suffices to bound the size of $\U(\oh_K)_u$ for each $\U$ and each point $u \in \U(\overline{\oh_{K, p}})$.  

The key idea of the approach can be represented using the commutative diagram
\begin{equation}\label{diag:chab}
\begin{tikzcd}[row sep = 2em]
   \U(\oh_K)_u \arrow[hook]{d}{\tilde{j}_b^U}  
   \arrow[hook]{rr}
&&       \U(\oh_{K, p})_u \arrow[hook]{d}{\tilde{j}_b^U} 
 \\
   \TT(\oh_K)_t \arrow[hook]{r} 
    &  \Y_t \arrow[hook]{r} 
    & \TT(\oh_{K, p})_t,
\end{tikzcd}
\end{equation}
where the top horizontal arrow is induced by the inclusion $\oh_K \hookrightarrow \oh_{K, p}$, and 
$\Y_t := \overline{\TT(\oh_K)_t}^p$ denotes the $p$-adic closure of $\TT(\oh_K)_t$ in $\TT(\oh_{K, p})_t$. We view $\U(\oh_K)_u$ and $\U(\oh_{K, p})_u$ as a subsets of $\TT(\oh_K)_t$ and $\TT(\oh_{K, p})_t$ respectively via the map $\tilde{j}_b^U$. In particular, we have inclusions
$\U(\oh_K)_u \hookrightarrow   \U(\oh_{K, p})_u \cap \Y_t. $
As explained in the introduction,  the goal is to bound the intersection 
    \begin{equation}\label{intersection}
     \U(\oh_{K, p})_u \cap \Y_t,
    \end{equation}
    which takes place in the $p$-adic analytic manifold $\TT(\oh_{K, p})_t$.

\begin{remark}
    For this intersection to have a chance to be finite, some condition must be imposed in the style of the original Chabauty condition  $r<g$. We will come back to this point in Section \ref{s:chabcondition} after stating the main technical result of the paper. 
\end{remark}

\subsection{The key technical result}\label{s:strategy} 

We give a description of $\Y_t$, which is a crucial ingredient in bounding the intersection (\ref{intersection}). 
\begin{notation} \label{notation:main_thm_sec} 
\hfill
\begin{itemize} 
\item Recall that $r:=\rank_\Z J_K(K)$ is the Mordell--Weil rank of $J_K$ over $K$. 
\item  
Let $\J(\oh_K)_0$ denote the subgroup of $J_K(K) = \J(\oh_K)$ given by kernel 
$$ \J(\oh_K)_0 := \ker \big( \textup{red}: \J(\oh_K) \lra \J(\overline{\oh_{K, p}}) \big).$$ 
\item 
Let $q^*$ denote the exponent of $\G_m(\overline{\oh_{K, p}})$, i.e., the least common multiple of $q_i-1=\# \mathbb{F}_{\mathfrak{p}_i}-1$ for $i\in \{ 1, \ldots, s \}$.
\item 
For each $i\in \{ 1, \ldots, s \}$, let $k_i = k_{\mathfrak{p}_i}=e_{\mathfrak{p}_i}f_{\mathfrak{p}_i}$ be the $\Z_p$-rank of $\oh_{K, \mathfrak{p}_i}$. Note that the rank of $\oh_{K, p}$ as a $\Z_p$-module is $\sum_{\mathfrak{p_i}\vert p} k_i =d$, where $d$ is the degree of $K$ over $\Q$. 
\end{itemize}
\end{notation}

 By Assumption \ref{passumption} $(b)$ on $p$, for each $i\in \{ 1, \ldots, s \}$, the reduction 
$\J(\oh_K)\lra \J(\F_{\mathfrak{p}_i})$ is injective on the torsion points of $\J(\oh_K)$ by \cite[Appendix]{katz1980galois}. Hence $\J(\oh_K)_0$ is a free $\Z$-module of rank $r$. We fix a basis $\mathbf{x}=\{ x_1, \ldots, x_r \}$. 

The idea is to parametrize the $p$-adic closure $\Y_t = \overline{\TT(\oh_K)_t}^p$ using the set
$$
(\G_m^{\rho-1}(\oh_K)_{\tf} \times \J(\oh_K)_{j_b(u)})\otimes \Z_p,
$$
where the subscript ``$\tf$'' stands for the torsion free quotient viewed as a free subgroup of $\G_m^{\rho-1}(\oh_K)$ via a fixed lift $\G_m^{\rho-1}(\oh_K)_{\tf}\lra \G_m^{\rho-1}(\oh_K)$.
In Section \ref{s:construct} (see Definition \ref{def:mapEprime}), we will define a map 
\[
E' : \G_m^{\rho-1}(\oh_K)_{\tf} \times \J(\oh_K)_{j_b(u)} \lra \TT(\oh_{K,p})_t,
\]
which depends on an initial choice of finitely many points of $\PP^{\times, \rho-1}(\oh_K)$, a choice which itself depends on the basis $\mathbf{x}$ fixed above. The map $E'$ enjoys the following properties (see Propositions \ref{prop:modify-E-by-local-units2} and \ref{mapE'}):


\begin{proposition}\label{prop:pinterpolation}
Upon fixing a basis for the free $\Z$-module $\G_m^{\rho-1}(\oh_K)_{\tf} \subset \G_m^{\rho-1}(\oh_K)$, and identifying the set $\G_m^{\rho-1}(\oh_K)_{\tf}\times \J(\oh_K)_{j_b(u)}$ with $\Z^{\delta(\rho-1)+r}$ using the basis $\mathbf{x}$ above, 
the map $E'$ can be viewed as a map $\Z^{\delta(\rho-1)+r} \lra \TT(\oh_{K, p})_t$. With respect to these choices of bases, $E'$ admits an explicit description in terms of the partial composition laws of Section \ref{S:PoincareTorsor}, and satisfies 
\begin{equation}\label{eq:Eprop}
    E'(q^* \Z^{\delta(\rho-1)+r})\subset \TT(\oh_K)_t \subset E'(\Z^{\delta(\rho-1)+r}),
\end{equation}
where $q^*$ is the integer defined in Notation \ref{notation:main_thm_sec}. 
\end{proposition}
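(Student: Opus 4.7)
The plan is to build $E'$ by assembling the Mordell--Weil lifts of $\J(\oh_K)_0$ with the $\G_m^{\rho-1}$-action on fibers of $\TT \to \J$. Since $\TT = \alpha^* \mathbf{P}^{\times, \rho-1}$ is a torsor over $\J$, moving in the $\J$-direction can be implemented via the partial composition laws $+_1$ and $+_2$ on $\mathbf{P}^{\times, \rho-1}$ applied to chosen lifts of a Mordell--Weil basis, while moving in the fiber direction is the tautological torsor action. These two operations commute compatibly by \eqref{GmAndGrpLawsCommute}.

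Concretely, I would first fix an $\oh_K$-lift $\tilde t \in \TT(\oh_K)$ of $t$, coming from the lift $\tilde j_b^U$ applied to some integral point of $\U(\oh_K)$ reducing to $u$ (if no such point exists, then $\U(\oh_K)_u$ is empty and there is nothing to bound). Using the factor $hm$ built into $\alpha$ together with Lemma \ref{lem:h}, I would select for each basis vector $x_i$ of $\J(\oh_K)_0$ a lift $\tilde x_i \in \TT(\oh_K)$ of $x_i$. Then I would fix a basis $\eta_1, \ldots, \eta_{\delta(\rho-1)}$ of the free module $\G_m^{\rho-1}(\oh_K)_{\tf}$, lifted to $\G_m^{\rho-1}(\oh_K)$ via the chosen splitting. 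The map $E'(\underline n; \underline a)$ is then defined by first building a point of $\TT(\oh_K)$ lying over $j_b(u) + \sum n_i x_i \in \J(\oh_K)$, transporting $\tilde t$ along the lifts $\tilde x_i$ using the $+_1$/$+_2$ structure on $\mathbf{P}^{\times, \rho-1}$, and then acting on the resulting point by $\prod_j \eta_j^{a_j} \in \G_m^{\rho-1}(\oh_K)$. This produces the promised explicit formula in terms of the partial composition laws, and by construction the output lies in $\TT(\oh_K)$.

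For the inclusion $E'(q^* \Z^{\delta(\rho-1)+r}) \subset \TT(\oh_K)_t$: reducing modulo the primes above $p$, the projection to $\J$ is $j_b(u) + \sum n_i x_i \equiv j_b(u)$ because each $x_i \in \J(\oh_K)_0$, while the fiber factor reduces to $\prod_j \bar\eta_j^{a_j}$, which is trivial in $\G_m^{\rho-1}(\overline{\oh_{K,p}})$ as soon as $q^* \mid a_j$, by the definition of $q^*$ as the exponent of that group. For the reverse $\TT(\oh_K)_t \subset E'(\Z^{\delta(\rho-1)+r})$: given $z \in \TT(\oh_K)_t$, projecting to $\J$ gives $\pi(z) - j_b(u) \in \J(\oh_K)_0$, so $\pi(z) = j_b(u) + \sum n_i x_i$ for unique integers $n_i$. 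The point $z$ and the assembled point built from $\tilde t, \tilde x_i$ with the same $n_i$ then agree in $\J$, hence differ by a unique $\eta \in \G_m^{\rho-1}(\oh_K)$ acting in the fiber, and the image of $\eta$ in $\G_m^{\rho-1}(\oh_K)_{\tf}$ supplies the integers $a_j$.

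The main obstacle is the torsion component of this $\eta$: the quotient map $\G_m^{\rho-1}(\oh_K) \twoheadrightarrow \G_m^{\rho-1}(\oh_K)_{\tf}$ has as kernel the roots of unity in $\oh_K^{\times,\rho-1}$, so the naive map only surjects onto $\TT(\oh_K)_t$ modulo this torsion. The fix, to be carried out in Proposition \ref{prop:modify-E-by-local-units2}, is to modify $\tilde t$ by a suitable $p$-adic unit so that the torsion piece of $\eta$ is absorbed into the base point; Assumption \ref{passumption}(c) that $p \nmid |\oh_{K,\textup{tors}}^\times|$ is precisely what ensures such an absorbing unit exists without perturbing the reduction modulo the primes above $p$, upgrading the naive $E$ to the desired $E'$.
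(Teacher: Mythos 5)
Your overall strategy is the right one---build $E'$ from a section over the Mordell--Weil group together with the $\G_m^{\rho-1}$-action on fibers, then patch up a torsion/reduction issue at the end---but there are two genuine gaps in the execution.

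First, the ``transport'' step is under-specified in a way that matters. $\TT$ is a torsor, not a group scheme, and a point of $\TT(\oh_K)$ lying over $x_{\underline n}=x_{\tilde t}+\sum_i n_i x_i$ must sit in $\PP^{\times,\rho-1}(\oh_K)$ over the specific point $\bigl(x_{\underline n},\, hm\cdot\tr_{\underline c}\circ\underline f(x_{\underline n})\bigr)\in \J\times(\Jo)^{\rho-1}(\oh_K)$. Because the target is a biextension, reaching that point from $\tilde t$ requires initial data lying over \emph{mixed} points of $\J\times(\Jo)^{\rho-1}$---not just lifts $\tilde x_i$ of the $x_i$ inside $\TT$---and in particular produces a term that is \emph{quadratic} in $\underline n$ (the paper's $C(\underline n)=\sum_{1,i}n_i\cdot_1\bigl(\sum_{2,j}n_j\cdot_2 P_{i,j}\bigr)$ with initial points $P_{i,j}$ over $(x_i, hm\underline f(x_j))$). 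Your description ``transporting $\tilde t$ along the lifts $\tilde x_i$'' never makes the formula explicit, and in fact a collection of lifts of the $x_i$ alone does not suffice: one needs the $r^2+2r$ initial points $P_{i,j}, R_i, S_j$ (whose existence uses Lemma \ref{lem:h} through the factor $hm$, as you correctly anticipate). Relatedly, your argument for $E'(q^*\Z^{\delta(\rho-1)+r})\subset\TT(\oh_K)_t$ addresses only the factor $\prod_j\bar\eta_j^{a_j}$ coming from the $\G_m$-direction; the transport itself contributes residual roots of unity from the reductions of all these initial points (not just the $\eta_j$), and killing them is exactly the content of Proposition \ref{prop:q-1-kills}, which requires $q^*\mid\underline n$ and not merely $q^*\mid\underline m$.

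Second, your proposed fix for the torsion obstruction is wrong. Modifying the single base point $\tilde t$ by a fixed $p$-adic unit $\zeta$ shifts the whole map by that one constant, $E\mapsto\zeta\cdot E$, because $\tilde t$ enters the formula exactly once. But the mismatch between $E(\underline m,\underline n)$ and $\TT(\oh_{K,p})_t$ genuinely depends on $(\underline m,\underline n)$: Proposition \ref{adjustE} produces a unique \emph{$(\underline m,\underline n)$-dependent} tuple of prime-to-$p$ roots of unity $\xi(\underline m,\underline n)$ with $\xi(\underline m,\underline n)\cdot E(\underline m,\underline n)\in\TT(\oh_{K,p})_t$. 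A constant shift cannot implement this. The paper achieves it either by defining $E'(\underline m,\underline n):=\xi(\underline m,\underline n)\cdot E(\underline m,\underline n)$ directly, or equivalently by replacing \emph{all} initial points $P_{i,j}, R_i, S_j, V_{k,l}, W_{k,l,i}$ (and not just $\tilde t$) with modifications $P'_{i,j}$, etc., that reduce to the identity modulo $p$ in the relevant trivial fibers; because these initial points are used linearly or bilinearly in $(\underline m,\underline n)$, a constant modification of each of them produces exactly the needed $(\underline m,\underline n)$-dependent correction to the output. The role of Assumption \ref{passumption}(c) is then, in the proof of the inclusion $\TT(\oh_K)_t\subset E'(\Z^{\delta(\rho-1)+r})$, to embed $\oh_{K,\tors}^\times$ into the prime-to-$p$ roots of unity of $\oh_{K,p}^\times$ so that the global torsion factor $\varepsilon$ from Proposition \ref{prop:bijective-E} is forced, by the uniqueness in Proposition \ref{adjustE}, to coincide with $\xi(\underline m,\underline n)$---not, as you state, to guarantee the existence of an absorbing unit.
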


From now on, we fix a basis $\mathbf{u}=\{ u_1, \ldots, u_{\delta} \}$ of $\G_m^{\rho-1}(\oh_K)_{\tf} \subset \G_m^{\rho-1}(\oh_K)$ and view $E'$ as a map $\Z^{\delta(\rho-1)+r} \lra \TT(\oh_{K, p})_t$. The domain of this map is a dense subspace of $\Z_p^{\delta(\rho-1)+r}$ endowed with the $p$-adic topology. On the other hand, the target space $\TT(\oh_{K, p})_t$ naturally carries a $p$-adic topology. 
In Section \ref{s:interpolation}, we ``$p$-adically interpolate'' the map $E'$ to get the following  result:

\begin{theorem}\label{thm:pinterpolation}
With respect to the fixed bases $\mathbf{x}$ and $\mathbf{u}$ above, there is a unique continuous map $\kappa=\kappa_{\mathbf{x},\mathbf{u}}$ 
making the diagram
\[
\begin{tikzcd}[column sep = 1.2em]
\Z^{\delta(\rho-1)+r}
 \arrow{rrr}{E'} \arrow[hook]{d}
&&& \TT(\oh_{K, p})_t \\
\Z_p^{\delta(\rho-1)+r} \arrow[dashed]{urrr}[swap]{\exists ! \kappa} 
&&&
\end{tikzcd}
\]
commute.
We call the map $\kappa$ the $p$-adic interpolation of $E'$ (with respect to the bases $\mathbf{x}$ and $\mathbf{u}$).
Moreover, the choice of a regular system of parameters $\mathbf{t}_{\mathfrak{p}}$ for $\TT\lra \J$ at $t_\mathfrak{p}\in \TT(\F_\mathfrak{p})$ for each $\mathfrak{p}$, as well as an isomorphism of $\Z_p$-modules $\oh_{K, \mathfrak{p}}\simeq \Z_p^{k_{\mathfrak{p}}}$, yields a homeomorphism 
$\tilde{\mathbf{t}} : \TT(\oh_{K, \mathfrak{p}})_t\simeq \Z_p^{(g+\rho-1)k_{\mathfrak{p}}}$ and uniquely determines a $(g+\rho-1)d$-tuple of convergent power series $\kappa_i\in \Z_p\langle z_1, \ldots, z_{\delta(\rho-1)+r} \rangle$ such that
\[
\tilde{\mathbf{t}}\circ \kappa = (\kappa_1, \ldots, \kappa_{(g+\rho-1)d}) : \Z_p^{\delta(\rho-1)+r} \lra \Z_p^{(g+\rho-1)d}.
\]
Thus, $\kappa$ is a map of $p$-adic analytic manifolds.
\end{theorem}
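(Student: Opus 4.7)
The plan is to construct $\kappa$ by reinterpreting the formula for $E'$ through the formal-group structures on the relevant residue disks, and then exploiting the density of $\Z$ in $\Z_p$ to pass from integer to $p$-adic exponents.

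\emph{Uniqueness.} The subgroup $\Z^{\delta(\rho-1)+r}$ is dense in $\Z_p^{\delta(\rho-1)+r}$ for the $p$-adic topology, and the target $\TT(\oh_{K,p})_t$ is Hausdorff as a $p$-adic analytic manifold. Hence any continuous extension of $E'$ is uniquely determined, which settles the uniqueness of $\kappa$.

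\emph{Existence via formal exponential and logarithm.} By Proposition \ref{prop:pinterpolation}, the map $E'$ is built from the partial composition laws $+_1, +_2$ on $\PP^\times$, the internal $\G_m$-action, and translations by a fixed collection of integral points of $\PP^{\times,\rho-1}(\oh_K)$; the integer inputs from $\Z^{\delta(\rho-1)+r}$ enter only as iterated applications of these group operations on the chosen bases $\mathbf{x}$ and $\mathbf{u}$. To extend such iterations from $\Z$ to $\Z_p$, one invokes the formal logarithm and exponential of Proposition \ref{logexp}: for each prime $\mathfrak{p}\mid p$, these give convergent integral power series identifications of $\J(\oh_{K,\mathfrak{p}})_0$ and $\G_m(\oh_{K,\mathfrak{p}})_1$ with their respective Lie algebras, regarded as $\oh_{K,\mathfrak{p}}$-modules. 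In these coordinates, iteration by $n\in\Z$ becomes scalar multiplication by $n$, and this extends continuously to multiplication by $z\in\Z_p$. Substituting into the formula for $E'$ then produces the desired continuous map $\kappa$ on all of $\Z_p^{\delta(\rho-1)+r}$.

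\emph{Power series description and main obstacle.} After the choice of a regular system of parameters $\mathbf{t}_{\mathfrak{p}}$ at each $t_{\mathfrak{p}}\in\TT(\F_{\mathfrak{p}})$ and an isomorphism $\oh_{K,\mathfrak{p}}\simeq\Z_p^{k_{\mathfrak{p}}}$, the disk $\TT(\oh_{K,\mathfrak{p}})_{t_{\mathfrak{p}}}$ becomes identified with $\Z_p^{(g+\rho-1)k_{\mathfrak{p}}}$, so that $\TT(\oh_{K,p})_t\simeq\Z_p^{(g+\rho-1)d}$. The operations $+_1$, $+_2$, the $\G_m^{\rho-1}$-action, and the translations are all morphisms of smooth $\oh_K$-schemes; on formal completions along the reduction of $t$ they become power series with coefficients in $\oh_{K,p}$. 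Composing with the formal exponential---whose $\Z_p$-integrality relies on Assumption \ref{passumption}(b), namely $e(\mathfrak{p}_i/p)<p-1$---then yields the components $\kappa_i\in\Z_p\langle z_1,\ldots,z_{\delta(\rho-1)+r}\rangle$. The main obstacle will be the bookkeeping needed to reconcile the biextension operations on $\PP^{\times,\rho-1}$ (a torsor, not a group, equipped with two compatible partial laws) with the formal logarithms on $\J$ and $\G_m^{\rho-1}$: one must verify that the formal coordinates on $\TT$ split compatibly with those on the base $\J$ and on the fibers $\G_m^{\rho-1}$, so that the formula defining $\kappa$ is genuinely given by globally convergent $p$-adic analytic power series rather than a merely formal expression.
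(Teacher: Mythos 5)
Your plan---interpolate through formal $\log$/$\exp$, then use density---is the right idea, and your uniqueness argument (density of $\Z$ in $\Z_p$, Hausdorff target) is correct. But the existence step has a genuine gap. You apply the formal logarithm and exponential to $\J(\oh_{K,\mathfrak{p}})_0$ and $\G_m(\oh_{K,\mathfrak{p}})_1$ \emph{separately} and then propose to ``substitute'' into the formula for $E'$. The iterations $n_i\cdot_1$ and $n_j\cdot_2$ appearing in that formula (Proposition~\ref{mapE'}), however, act on $\PP^{\times,\rho-1}$ viewed as a relative group scheme over $(\Jo)^{\rho-1}$ (for $+_1$) or over $\J$ (for $+_2$). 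The fibers of these group schemes are extensions of $J_K$ (resp.\ $J_K^\vee$) by $\G_m$, and these extensions do not split as a product; so there is no reason the formal coordinates on $\TT$ should decompose compatibly into a ``$\J$-part'' and a ``$\G_m^{\rho-1}$-part''. This is exactly the compatibility you flag as the ``main obstacle'' at the end, and it is not something one can verify in general---it simply does not hold.

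The paper avoids the splitting question entirely: Proposition~\ref{cor:AllPrimesInterpolation}(2) applies the exp-log argument (Proposition~\ref{logexp}) directly to the \emph{relative} group schemes $\PP^{\times}\lra\Jo$ and $\PP^{\times}\lra\J$, which are smooth commutative group schemes over a smooth base. This yields convergent $\Z_p$-power-series descriptions of $(z,g)\mapsto z\cdot_1 g$ and $(z,g)\mapsto z\cdot_2 g$ for $z\in\Z_p$, with no decomposition of the formal group of the torsor into base and fiber contributions; the ramification bound $e(\mathfrak{p}_i/p)<p-1$ enters precisely here, as you anticipated. With the iterates interpolated, the formula of Proposition~\ref{mapE'} extends verbatim to $\Z_p^{\delta(\rho-1)+r}$, and the power-series description of $\kappa$ then follows by applying Proposition~\ref{cor:AllPrimesInterpolation}(1) to the binary operations $+_1$, $+_2$ on $\PP^{\times,\rho-1}$ and using that $\Z_p\langle z_1,\ldots,z_{\delta(\rho-1)+r}\rangle$ is closed under composition. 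Replacing your separate interpolations on $\J$ and $\G_m^{\rho-1}$ by the relative interpolation on $\PP^{\times}\lra\Jo$ and $\PP^{\times}\lra\J$ is the missing step.
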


\begin{remark}\label{rem:dependencies}
A lot of dependencies have been suppressed in the above notations. The map $\kappa$ of Theorem \ref{thm:pinterpolation} depends on the point $t\in \TT(\overline{\oh_{K,p}})$ and a choice of lift $\tilde{t}$ in $\TT(\oh_{K})$, the choice of basis $\mathbf{x}$ for the free $\Z$-module $\J(\oh_K)_0$, the finitely many choices of initial points of $\P^{\rho-1}(\oh_K)$ needed to define the map $E'$, and finally the basis $\mathbf{u}$ for the free $\Z$-module $\G_m(\oh_K)_{\tf}\subset \G_m(\oh_K)$. Note however, that $\kappa$ does not depend on the choice of local parameters for $\TT(\oh_{K,p})_t$, even though its expression in terms of convergent $p$-adic power series does.
\end{remark}

\begin{corollary}\label{coro:imagekappa}
The image of the map $\kappa$ is the $p$-adic closure $\Y_t = \overline{\TT(\oh_K)_t}^p$. 
\end{corollary}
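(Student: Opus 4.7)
The plan is to prove the two inclusions $\Y_t \subset \operatorname{im}(\kappa)$ and $\operatorname{im}(\kappa) \subset \Y_t$ using the sandwiching property \eqref{eq:Eprop} together with continuity and compactness.

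For the inclusion $\Y_t \subset \operatorname{im}(\kappa)$: from the right half of \eqref{eq:Eprop} and the fact that $\kappa$ extends $E'$, we have
\[
\TT(\oh_K)_t \;\subset\; E'(\Z^{\delta(\rho-1)+r}) \;=\; \kappa(\Z^{\delta(\rho-1)+r}) \;\subset\; \kappa(\Z_p^{\delta(\rho-1)+r}).
\]
By Theorem \ref{thm:pinterpolation}, $\kappa$ is continuous, and $\Z_p^{\delta(\rho-1)+r}$ is compact, so $\operatorname{im}(\kappa)$ is a compact (hence $p$-adically closed) subset of the Hausdorff space $\TT(\oh_{K,p})_t$. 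Taking $p$-adic closures then gives $\Y_t = \overline{\TT(\oh_K)_t}^p \subset \operatorname{im}(\kappa)$.

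For the reverse inclusion $\operatorname{im}(\kappa) \subset \Y_t$, the key observation is that $q^*$ is a $p$-adic unit: by its definition in Notation \ref{notation:main_thm_sec}, $q^*$ is the least common multiple of the integers $q_i-1$ where each $q_i=\#\F_{\mathfrak{p}_i}$ is a power of $p$, so $\gcd(q^*,p)=1$. Consequently $q^*\Z$ is dense in $\Z_p$, and $q^*\Z^{\delta(\rho-1)+r}$ is dense in $\Z_p^{\delta(\rho-1)+r}$. Combining the continuity of $\kappa$ with the left half of \eqref{eq:Eprop} yields
\[
\operatorname{im}(\kappa) \;=\; \kappa\bigl(\overline{q^*\Z^{\delta(\rho-1)+r}}\bigr) \;\subset\; \overline{\kappa(q^*\Z^{\delta(\rho-1)+r})}^p \;=\; \overline{E'(q^*\Z^{\delta(\rho-1)+r})}^p \;\subset\; \overline{\TT(\oh_K)_t}^p \;=\; \Y_t,
\]
completing the proof.

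There is no real obstacle here beyond assembling the pieces already in hand; the proof is essentially formal once one notices that $q^*\in \Z_p^\times$, so that the two "buffer" lattices in \eqref{eq:Eprop} both sit densely inside $\Z_p^{\delta(\rho-1)+r}$. The role of Theorem \ref{thm:pinterpolation} is precisely to provide the continuity (in fact analyticity) needed to transport density through $\kappa$.
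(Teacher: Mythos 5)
Your proof is correct and takes essentially the same approach as the paper: compactness plus continuity gives closedness of $\operatorname{im}(\kappa)$ (hence $\Y_t \subset \operatorname{im}(\kappa)$ from the right half of \eqref{eq:Eprop}), while density of $q^*\Z^{\delta(\rho-1)+r}$ in $\Z_p^{\delta(\rho-1)+r}$ together with the left half of \eqref{eq:Eprop} gives the reverse inclusion. Your version spells out slightly more explicitly why $q^*$ is coprime to $p$, but the argument is the same.
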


\begin{proof}
Since $\Z_p^{\delta(\rho-1)+r}$ is compact and $\kappa$ is continuous, the image of $\kappa$ is closed in $\TT(\oh_{K, p})_t$. Since $\kappa$ extends $E'$, the second containment of (\ref{eq:Eprop}) implies that $\im \kappa$ contains $\TT(\oh_K)_t$, and thus also contains $\Y_t$. On the other hand $q^*\Z^{\delta(\rho-1)+r}$ is dense in $\Z_p^{\delta(\rho-1)+r}$ since $q^*$ is coprime to $p$. By continuity of $\kappa$, we have 
\[
\im \kappa = E'\left(\overline{q^*\Z^{\delta(\rho-1)+r}}\right) \subset \overline{E'(q^*\Z^{\delta(\rho-1)+r})}\subset \Y_t = \overline{\TT(\oh_K)_t}^p,
\]
where the last containment uses the first inclusion of (\ref{eq:Eprop}). This concludes the proof.
\end{proof}

In Section \ref{s:end}, we use the map $\kappa$ to pull-back the $p$-adically convergent power series cutting out $\U(\oh_{K,p})_u$ inside $T(\oh_{K,p})_t$. These pull-backs belong to $R:=\Z_p \langle z_1, ..., z_{\delta(\rho - 1)+ r} \rangle$ and generate an ideal $I_{\U,u}=I_{\U,u,\kappa}$ whose precise definition is given in Definition \ref{AI}.
The more precise form of Theorem \ref{thm:intro_main} is then the following, which we prove in Section \ref{s:end}:

\begin{theorem} \label{thm:main_rough_form}
If $\overline{A}_{\U, u} := \big( R/I_{\U, u} \big) \otimes \F_p $ is finite dimensional over $\F_p$, then the number of rational points in $\U (\oh_K)_u$ is finite and bounded by 
$$ | \U (\oh_K)_u | \le \dim_{\F_p} \overline{A}_{\U, u}. $$
\end{theorem}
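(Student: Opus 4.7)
The plan is to bound $|\U(\oh_K)_u|$ through a chain of three inequalities. Write $n := \delta(\rho-1)+r$ and $A := R/I_{\U,u}$.

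First, because $j_b\colon \U\to\J$ is a closed immersion (being the Abel--Jacobi map of a smooth curve of genus $\ge 2$), so is its lift $\tilde{j}_b^U\colon \U\to\TT$; in particular $\tilde{j}_b^U$ is injective on $\oh_{K,p}$-points. The commutative diagram \eqref{diag:chab} then realises $\U(\oh_K)_u$ as a subset of $\tilde{j}_b^U(\U(\oh_{K,p})_u)\cap\Y_t$ via $\tilde{j}_b^U$, yielding the first inequality
$$|\U(\oh_K)_u|\;\le\;\big|\tilde{j}_b^U(\U(\oh_{K,p})_u)\cap\Y_t\big|.$$

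Second, Corollary \ref{coro:imagekappa} asserts that $\kappa\colon\Z_p^n\twoheadrightarrow\Y_t$ is surjective. By the construction of $I_{\U,u}$ (Definition \ref{AI}) as the ideal generated by $\kappa^*$ of the convergent power series cutting out the image $\tilde{j}_b^U(\U(\oh_{K,p})_u)$ inside $\TT(\oh_{K,p})_t$, one has
$$\spec(A)(\Z_p) \;=\; \kappa^{-1}\big(\tilde{j}_b^U(\U(\oh_{K,p})_u)\big) \;=\; \kappa^{-1}\big(\tilde{j}_b^U(\U(\oh_{K,p})_u)\cap\Y_t\big),$$
the last equality since $\im(\kappa)\subset\Y_t$. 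Surjectivity of $\kappa$ onto $\Y_t$ then gives $\big|\tilde{j}_b^U(\U(\oh_{K,p})_u)\cap\Y_t\big|\le|\spec(A)(\Z_p)|$.

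Third, to bound $|\spec(A)(\Z_p)|$ by $\dim_{\F_p}\overline{A}_{\U,u}$: since the Tate algebra $R=\Z_p\langle z_1,\ldots,z_n\rangle$ is Noetherian and $p$-adically complete (so every ideal is closed), the quotient $A$ is a $p$-adically complete and separated $\Z_p$-algebra. The topological Nakayama lemma shows that lifts of any $\F_p$-basis of $\overline{A}_{\U,u}=A/pA$ generate $A$ as a $\Z_p$-module, so $A$ is finite of $\Z_p$-rank at most $\dim_{\F_p}\overline{A}_{\U,u}$. Any $\Z_p$-algebra map $A\to\Z_p$ extends uniquely to a $\Q_p$-algebra map $A\otimes_{\Z_p}\Q_p\to\Q_p$, and the finite $\Q_p$-algebra $A\otimes\Q_p$ decomposes as a product of local Artinian $\Q_p$-algebras, each admitting at most one $\Q_p$-algebra map to $\Q_p$ (the residue map when the residue field is $\Q_p$, and none otherwise). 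Thus
$$|\spec(A)(\Z_p)|\;\le\;\dim_{\Q_p}(A\otimes_{\Z_p}\Q_p)\;=\;\rank_{\Z_p}(A)\;\le\;\dim_{\F_p}\overline{A}_{\U,u}.$$
Chaining the three inequalities completes the proof. The only non-formal point to verify against Definition \ref{AI} is the identification of $\spec(A)(\Z_p)$ in Step 2; the remaining ingredients are standard commutative algebra.
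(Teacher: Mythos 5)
Your proof is correct and follows essentially the same route as the paper: Step~2 is exactly the bijection \eqref{HomA} (with the inclusion $\U(\oh_{K,p})_u \hookrightarrow \TT(\oh_{K,p})_t$ via $\tilde j_b^U$ made explicit, as the paper sets up just before its Definition~\ref{AI}), and Steps~1 and~3 together reproduce the chain of inequalities in the paper's proof. The only genuine difference is presentational and occurs in Step~3: the paper decomposes the finite complete $\Z_p$-algebra $A$ as a product of local factors $A_\mathfrak{m}$ and bounds $\lvert\Hom(A_\mathfrak{m},\Z_p)\rvert \le \rank_{\Z_p} A_\mathfrak{m}$ for each $\mathfrak{m}$ with residue field $\F_p$, while you tensor with $\Q_p$ first and decompose $A\otimes\Q_p$ into local Artinian $\Q_p$-algebras. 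These two maneuvers are interchangeable (the paper's bound on each $\Hom(A_\mathfrak{m},\Z_p)$ secretly uses the same $\otimes\,\Q_p$ observation). In fact your version is slightly cleaner: the paper writes $\rank_{\Z_p} A_\mathfrak{m} = \dim_{\F_p}\overline{A}_\mathfrak{m}$, which is an equality only when $A_\mathfrak{m}$ is $p$-torsion-free and should really be $\le$, whereas you correctly state $\rank_{\Z_p}(A) \le \dim_{\F_p}\overline{A}$ and get the right inequality chain without needing any flatness assumption.

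One small remark: in Step~1 you could bypass the closed-immersion argument entirely; injectivity of $\tilde j_b^U$ on points follows immediately from the fact that it lifts $j_b$, which is already injective, since any two points with the same image under $\tilde j_b^U$ have the same image in $\J$ and hence coincide. That said, the closed-immersion claim is also true (graph argument using separatedness of $\TT\to\J$), so there is no error.
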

As discussed in the introduction, we expect this bound to prove useful in determining the rational points of curves in many new examples.

\subsection{Chabauty conditions}\label{s:chabcondition}

We end this section with a discussion of the Chabauty condition. 
We retain all notations and assumptions from the previous sections, in particular Assumption \ref{passumption} on the prime $p$. 

 For each prime $\mathfrak{p}$ above $p$, the scheme $\TT\times_{\oh_K} \spec \oh_{K, \mathfrak{p}}$ is smooth over $\oh_{K, \mathfrak{p}}$ of relative dimension $g+\rho-1$ and the set $\TT(\oh_{K, \mathfrak{p}})$ is equipped with the structure of a $p$-adic analytic manifold of dimension $(g+\rho-1)k_\fp$. In particular, $\TT(\oh_{K, p})$ is a $p$-adic analytic manifold of dimension
$$(g+\rho-1)\sum_{\fp\vert p} k_\fp=(g+\rho-1)d.$$ 
By Theorem \ref{thm:pinterpolation} and Corollary \ref{coro:imagekappa}, the $p$-adic analytic submanifold $\Y_t = \overline{\TT(\oh_K)_t}^p$ is parametrized by $\Z_p^{\delta(\rho-1)+r}$ via the map of $p$-adic analytic manifolds $\kappa : \Z_p^{\delta(\rho-1)+r} \twoheadrightarrow \Y_t$.
The dimension  of the  $p$-adic analytic manifold $\Y_t$ is therefore bounded above by 
$$\dim \Y_t \leq \delta(\rho-1)+r.$$
Finally, we observe that $\U(\oh_{K, p})$ has dimension $d$ as a $p$-adic analytic manifold.

A necessary condition for the intersection $\U(\oh_{K, p})_u \cap \Y_t$ in (\ref{intersection}) to be finite is the inequality on dimensions of $p$-adic analytic manifolds 
$$
\codim \U(\oh_{K, p}) + \codim \Y_t \geq \dim \TT(\oh_{K, p}), 
$$
where the codimensions are taken with respect to the ambient manifold $\TT(\oh_{K, p})$.
This inequality is satisfied if we require the weaker inequality 
$$\delta(\rho-1)+r \leq (g+\rho-2)d,$$ which in turn is equivalent to the condition
\begin{equation}\label{eq:chabbound}
    r\leq (g-1)d+(\rho-1)(r_2+1).
\end{equation}

\begin{definition} \label{definition:chabcondition}
We say that a smooth, projective and geometrically connected curve $C_K$ of genus $g\geq 2$ over a number field $K$ satisfies the geometric quadratic Chabauty condition if 
the inequality (\ref{eq:chabbound}) holds. 
\end{definition}

\begin{remark} \label{remark:chabcondition}
The term ``geometric'' distinguishes condition (\ref{eq:chabbound}) from other Chabauty type conditions associated to the various methods discussed in Sections \ref{s:Qmethods} and \ref{s:intro:GQCNF}. We briefly compare these conditions: 

    \begin{itemize}
        \item When $K=\Q,$ the condition (\ref{eq:chabbound}) becomes $r\leq g+\rho-2,$ which is the same condition as in the geometric quadratic Chabauty method over $\Q$ of Edixhoven and Lido \cite{EL19} discussed in Section \ref{s:intro:GQC}.
        \item Siksek \cite{siksek} extended the classical Chabauty--Coleman method to arbitrary number fields using Weil restrictions. This is the Restriction of Scalars (RoS) Chabauty method of Section \ref{s:intro:RoSC}. The method is expected to be successful when 
        $
        r\leq (g-1)d
        $
       and certain additional conditions are assumed (see the discussion in Sections \ref{s:intro:RoSC} and \ref{s:intro:RoSQC}).
        Under such conditions, the geometric quadratic Chabauty method is thus expected to go beyond the RoS Chabauty method.
        \item In their recent work \cite{BBBM19}, Balakrishnan, Besser, Bianchi, and M\"uller extended the method of quadratic Chabauty to number fields in the case of hyperelliptic or bielliptic curves. In Section \ref{s:intro:RoSQC}, we referred to this method as the effective RoS quadratic Chabauty method. It performs under the relaxed condition
        $ r \le (g-1) d + r_2 + 1. $
        The geometric Chabauty condition agrees with this when $\rho$ is equal to $2$, and in fact generalizes this bound for $\rho \ge 2$. 
        \item Dogra \cite{Dogra19} has recently proved that, under the extra condition \eqref{hom} on $J_K$ and $K$, a certain ``arithmetic quadratic Chabauty condition'' implies that the set $C_K(K_\mathfrak{p})_2$ appearing in the RoS Chabauty--Kim method of Section \ref{s:intro:RoSQC} is finite for some $\mathfrak{p}$ above the split prime $p$. 
 If one assumes the finiteness of the $p$-primary part of the Shafarevich--Tate group for $J_K$, then the aforementioned Chabauty condition of Dogra is (a slightly relaxed version of) the geometric condition (\ref{eq:chabbound}). We refer to \cite[Proposition 1.1 \& Remark 1.5]{Dogra19} for further details. 
    \end{itemize}

\end{remark}

\section{The parametrization of $\Y_t$}\label{s:closure}

We retain the notations of Section \ref{s:method}.  The goal of this section is to prove Theorem \ref{thm:pinterpolation}, i.e., to describe the $p$-adic closure $\Y_t$ of $\TT(\oh_K)_t$ inside $\TT(\oh_{K, p})_t$. 

\subsection{Construction of the map $E'$}\label{s:construct} 

We construct the map $E'$ of Proposition \ref{prop:pinterpolation}. This map, and subsequently its $p$-adic interpolation $\kappa$ of Theorem \ref{thm:pinterpolation}, depends on the choice of $r^2+2r$ initial points in $\PP^{\times, \rho-1}(\oh_K)$. The definition of $E'$ is achieved in $3$ steps.

\begin{notation} \label{NotationForE}
\hfill
\begin{itemize}
\item Fix a basis $\mathbf{x}=\{ x_1, \dots, x_r \}$ of the free $\Z$-module $\J(\oh_K)_0 = \ker \big(\J(\oh_K) \lra \J(\overline{\oh_{K, p}}) \big)$. 
\item Recall that $u$ is a fixed $\overline{\oh_{K, p}}$-point of $\U$ and $t=\tilde{j}_b^U(u)$. 
Denote by $\tilde{t}$ any lift of $t$ to an $\oh_K$-point of the torsor $\TT$. Such a lift is assumed to exist, as otherwise $\U(\oh_K)_u = \emptyset$, and we are done. Denote by $x_{\tilde{t}}$ its image in $\J(\oh_K)$. 
\item Let $\TT(\oh_K)_{j_b(u)}$ be the set of points of $\TT(\oh_K)$ whose image in $\J(\overline{\oh_{K, p}})$ is $j_b(u)$.
\end{itemize}
For the reader's convenience, the points and sets are pictured in the following diagrams: 
\[
\begin{tikzcd} 
 & t  \arrow[d, mapsto] & \tilde t  \arrow[l, squiggly, mapsto, swap, "\textup{red}"] \arrow[d, mapsto] \\ 
u \arrow[r, mapsto, "j_b"] \arrow[ru, mapsto, "\tilde j_b^U"]  & j_b (u) & 
 x_{\tilde t} \arrow[l, squiggly, mapsto, swap, "\textup{red}"] 
\end{tikzcd}
\quad  \quad
\begin{tikzcd}[column sep = 1em]
 & \TT (\oh_K)_t \subset \TT(\oh_K)_{j_b(u)} \arrow[d] \\ 
\U(\oh_K)_u \arrow[r, hook, "j_b"] \arrow[ru, hook, "\tilde j_b^U"]  & \J(\oh_K)_{j_b(u)}.
\end{tikzcd}
\] 
\end{notation}

\subsubsection{Construction of the map $D$}\label{ss:D}

The first step is the construction of a section 
\[ 
D : \J(\oh_K)_{j_b(u)} \lra \TT(\oh_K)_{j_b(u)},
\]
essentially by choosing a lift for each point $x\in \J(\oh_K)_{j_b(u)}$.  
This can be done in a coherent way using the biextension laws (important for the $p$-adic interpolation below), once we choose finitely many initial points
$P_{i, j}, R_{i}, S_{j} \in \PP^{\times, \rho-1}(\oh_K)$, $0\leq i,j\leq r$, lifting the following points of 
$\J \times (\Jo)^{\rho-1}(\oh_K)$:
\begin{align*}
P_{i, j} &\longmapsto \Big(x_i,\, \underline{f}(hmx_j)\Big)=\Big(x_i,\, hm\underline{f}(x_j)\Big),\\
R_{i} &\longmapsto \Big(x_i,\, ({hm\cdot}\circ \tr_{\underline{c}}\circ \underline{f})(x_{\tilde{t}})\Big),\\
S_{j} &\longmapsto \Big(x_{\tilde{t}},\, \underline{f}(hmx_j)\Big)=\Big(x_{\tilde{t}},\, hm\underline{f}(x_j)\Big).
\end{align*}
Here $\underline{f}$ is given by the functions $f_i$ of Notation \ref{notation:f_i}. 
Note that the points to be lifted are of the form $(*, h\cdot *)$, and the existence of such lifts is guaranteed by Lemma \ref{lem:h}. Note also that unlike the situation of \cite{EL19}, these lifts are no longer defined up to a finite choice, as they are now parametrized by $\G_m^{\rho-1}(\oh_K)$. 

As a set, $\J(\oh_K)_{j_b(u)}=x_{\tilde{t}}+\J(\oh_K)_0=x_{\tilde{t}}+\bigoplus_{i=1}^r \Z x_i$ is in bijection with $\Z^r$: any point of $\J(\oh_K)_{j_b(u)}$ can be expressed as $x_{\underline{n}}:=x_{\tilde{t}}+\sum_{i=1}^r n_i x_i$ for a unique $\underline{n} \in \Z^{r}$.
Given $\underline{n} \in \Z^{r}$, we will now define a lift $D(\underline{n}):=D(x_{\underline{n}})$ of $x_{\underline{n}}$ using the above initial points and the biextension laws. Define 
\[
\begin{array}{lcl}
A(\underline{n}):={\sum}_{2, j}n_j \cdot_2 S_j &\longmapsto & \Big(x_{\tilde{t}},\, hm\underline{f}\big(\sum_{i}n_ix_i\big)\Big),\\
B(\underline{n}):={\sum}_{1, i}n_i \cdot_1 R_i &\longmapsto & \Big(\sum_i n_i x_i,\, ({hm\cdot}\circ \tr_{\underline{c}}\circ \underline{f})(x_{\tilde{t}})\Big),\\
C(\underline{n}):={\sum}_{1, i}n_i \cdot_1 \left( {\sum}_{2, j}n_j \cdot_2 P_{i, j} \right) &\longmapsto & \Big(\sum_i n_i x_i,\, hm\underline{f}\big(\sum_{i}n_ix_i\big)\Big).
\end{array}
\]
Here $\cdot_1$ and $\cdot_2$ denote the iterations of the operations $+_1$ and $+_2$ respectively, and similarly for $\sum_1$ and $\sum_2$.
We then define
$$D(\underline{n}):=\big(C(\underline{n})+_2B(\underline{n})\big)+_1\big(A(\underline{n})+_2\tilde{t}\big),$$
lying over the point
$$\big(x_{\underline{n}}, \alpha(x_{\underline{n}})\big):=\left(x_{\tilde{t}} + \sum_{i}n_ix_i, \big({hm\cdot} \circ \tr_{\underline{c}}\circ \underline{f}\big)\left(x_{\tilde{t}}+ \sum_in_i x_i\right) \right)$$
in $\J\times (\Jo)^{\rho-1}(\oh_K)$. To see this, note that the point $\tilde{t} \in \TT (\oh_K)$, when viewed as an point in $\PP^{\times, \rho-1}$, lies over the point 
$(x_{\tilde{t}}, ({hm\cdot}\circ \tr_{\underline{c}}\circ \underline{f})(x_{\tilde{t}}) ).$ In particular, $D(\underline{n})$ belongs to $\TT(\oh_K)$ and lies above $x_{\underline{n}}$ in $J(\oh_K)_{j_b(u)}$.

\subsubsection{Construction of the map $E$}\label{ss:E}

The second step is one of the main technical innovations of this work compared to \cite{EL19}: we extend
$D: \J(\oh_K)_{j_b(u)} \lra \TT(\oh_K)_{j_b(u)}$ to a map 
$$
E: \G_m(\oh_K)_{\mathrm{tf}}^{\rho-1} \times \J(\oh_K)_{j_b(u)} \lra  \TT(\oh_K)_{j_b(u)}, \qquad E(\zeta, x):=\zeta\cdot D(x).
$$
The subscript $\mathrm{tf}$ stands for ``torsion-free quotient'' as before, viewed as a subgroup of $\G_m(\oh_K)^{\rho-1}$ via a map determined by an arbitrary but fixed choice of a splitting $\oh_{K, \mathrm{tf}}^{\times} \lra \oh_K^{\times}$.
It will be important later on that this map admits an expression in terms of $+_1$, $+_2$ and their iterates $\cdot_1, \cdot_2$. 

\begin{notation}\label{not:VKL}
\hfill
\begin{itemize}
\item 
Fix a free basis $\mathbf{u}=\{u_1, \dots, u_\delta\}$ of  $\oh_{K,\mathrm{tf}}^{\times}=\G_m(\oh_K)_{\mathrm{tf}}$, viewed as a subgroup of $\oh_K^{\times}$ via 
the same choice of splitting as above.
\item For each $(\rho-1)$-tuple $u_{k, l}=(1, \dots, 1, u_{k}, 1, \dots, 1) \in\G_m^{\rho-1}(\oh_K)$, where $u_k$ sits in the $l$-th coordinate, we denote the corresponding elements in $\PP^{\times}_{\mid_{\J \times 0}}(\oh_K)$ above the point $(x_{\tilde{t}}, 0)$ by $V_{k, l}$, in the sense of Formula (\ref{rem:internal-Gm-action}) but with $\PP^\times$ in place of $P_K^\times$. Likewise we denote the corresponding element above $(x_i, 0)$ by $W_{k, l, i}$.
\end{itemize}
\end{notation}

\begin{definition} \label{map:E}
For $\underline{n} \in \Z^r$, $k\in \{ 1, \ldots, \delta \}$ and $l\in \{ 1, \ldots, \rho-1 \}$, we define the element 
$$U_{k, l}(\underline{n}):=V_{k, l}+_1 \sum_{1, i}n_i\cdot_1 W_{k, l, i},$$ 
which represents multiplication by $u_{k, l}$ and lies above the point 
$(x_{\underline{n}}, 0).$
\end{definition}

Any element of $\G_m(\oh_K)_{\mathrm{tf}}^{\rho-1}$ can be described as $\zeta_{\underline{m}}:=(\prod_{k=1}^\delta u_k^{m_{k,l}})_{l=1}^{\rho-1}$ for a unique $(\rho-1)$-tuple of $\delta$-tuples of integers $\underline{m}=(m_{k, l})_{\substack{1\leq k \leq \delta \\ 1\leq l \leq \rho-1}} \in \Z^{\delta(\rho-1)}.$
Given $\underline{m}=(m_{k, l})_{\substack{1\leq k \leq \delta \\ 1\leq l \leq \rho-1}} \in \Z^{\delta(\rho-1)},$ the map $E$ is described explicitly by the following formula:
\begin{equation}\label{explicitE}
E(\underline{m}, \underline{n}):=E(\zeta_{\underline{m}},x_{\underline{n}})=\left({\sum}_{2, k, l}m_{k, l}\cdot_2U_{k, l}(\underline{n})\right)+_2 D(\underline{n})\in \TT(\oh_K).
\end{equation}

One easily checks that $E(\underline{m}, \underline{n})$ lies over the same point 
$\big(x_{\underline{n}}, \alpha(x_{\underline{n}})\big) \in \J\times \Jo(\oh_K)$ as does $D(\underline{n})$. After all, the parameters $\underline{m}$ just encode part of the $\G_m^{\rho-1}$-action on the fibers, as was previously indicated. Passing from $\oh_K$ to $\overline{\oh_{K, p}},$ the contribution of the $x_i$'s vanishes and the point becomes 
$(j_b(u), ({hm\cdot}\circ \tr_{\underline{c}}\circ \underline{f})(j_b(u))).$ In other words, we have 
$E(\underline{m}, \underline{n}) \in \TT(\oh_K)_{j_b(u)}.$

\begin{proposition}\label{prop:bijective-E}
The map
$\oh_{K, \tors}^{\times, \rho-1}\times \Z^{\delta(\rho-1)+r} \lra \TT(\oh_K)_{j_b(u)}$ defined by mapping
$(\varepsilon, \underline{m}, \underline{n})$ to $\varepsilon \cdot E(\underline{m}, \underline{n})$
is bijective.
\end{proposition}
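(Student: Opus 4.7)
The plan is to analyze the map by fibering $\TT(\oh_K)_{j_b(u)}$ over the points it sits above in $\J(\oh_K)_{j_b(u)}$, and then using the $\G_m^{\rho-1}$-torsor structure on each fiber.

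First, I would observe that $\TT(\oh_K)_{j_b(u)}$ sits above $\J(\oh_K)_{j_b(u)}$ via the structural map $\TT \lra \J$. By definition, $\J(\oh_K)_{j_b(u)}$ consists of the $\oh_K$-points of $\J$ reducing to $j_b(u)\in \J(\overline{\oh_{K,p}})$, so it is a coset of the kernel of reduction $\J(\oh_K)_0$; using the fixed point $x_{\tilde{t}}$ and the chosen basis $\mathbf{x}=\{x_1,\dots,x_r\}$ we obtain the bijection $\Z^r\lra \J(\oh_K)_{j_b(u)}$, $\underline{n}\mapsto x_{\underline{n}}:=x_{\tilde{t}}+\sum_i n_i x_i$. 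Moreover by construction (Section \ref{ss:D}), $D(\underline{n})$ lies in $\TT(\oh_K)_{j_b(u)}$ above $x_{\underline{n}}$, so in particular every fiber of $\TT\lra \J$ above a point of $\J(\oh_K)_{j_b(u)}$ is non-empty as a set of $\oh_K$-points.

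Next I would use the fact that $\TT$ is a $\G_m^{\rho-1}$-torsor over $\J$: for each $\underline{n}\in \Z^r$, the fiber of $\TT(\oh_K)$ above $x_{\underline{n}}\in \J(\oh_K)$ is a $\G_m^{\rho-1}(\oh_K)=(\oh_K^{\times})^{\rho-1}$-torsor, which is trivialized by the section $D(\underline{n})$. Hence the map $\G_m^{\rho-1}(\oh_K)\lra \TT(\oh_K)_{x_{\underline{n}}}$, $\zeta\mapsto \zeta \cdot D(\underline{n})$, is bijective. Combined with the parametrization of the base, this yields a bijection
\[
\G_m^{\rho-1}(\oh_K)\times \Z^r \;\xrightarrow{\sim}\; \TT(\oh_K)_{j_b(u)},\qquad (\zeta,\underline{n})\longmapsto \zeta\cdot D(\underline{n}).
\]

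The final step is to rewrite the source using the fixed splitting $\oh_K^{\times}\simeq \oh_{K,\tors}^{\times}\times \G_m(\oh_K)_{\tf}$ and the basis $\mathbf{u}=\{u_1,\dots,u_{\delta}\}$ of $\G_m(\oh_K)_{\tf}$, which produces a bijection $\oh_{K,\tors}^{\times,\rho-1}\times \Z^{\delta(\rho-1)}\xrightarrow{\sim}\G_m^{\rho-1}(\oh_K)$ sending $(\varepsilon,\underline{m})$ to $\varepsilon\zeta_{\underline{m}}$. Composing with the previous bijection gives precisely the claimed parametrization, once we observe that $\varepsilon\cdot E(\underline{m},\underline{n})=\varepsilon\zeta_{\underline{m}}\cdot D(\underline{n})$; this identification uses the definition $E(\zeta,x)=\zeta\cdot D(x)$ and, for the explicit formula (\ref{explicitE}), the compatibility of the $\G_m$-action on $\TT$ with the partial composition laws $+_1$ and $+_2$ recalled in (\ref{rem:internal-Gm-action})--(\ref{GmAndGrpLawsCommute}). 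The only mild subtlety is to verify this compatibility carefully, since it is what allows the explicit biextension-theoretic formula to be recognized as the simpler abstract expression $\zeta_{\underline{m}}\cdot D(\underline{n})$; this is the main point to check, while the remaining ingredients (the coset structure, the torsor structure of $\TT\lra \J$, and the splitting of $\oh_K^{\times}$) are formal.
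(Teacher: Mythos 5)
Your proof is correct and follows essentially the same route as the paper's: fiber $\TT(\oh_K)_{j_b(u)}$ over $\J(\oh_K)_{j_b(u)}\cong\Z^r$ via $D$, use simple transitivity of the $\G_m^{\rho-1}(\oh_K)$-action on each fiber, and split $\oh_K^\times\simeq\oh_{K,\tors}^\times\times\G_m(\oh_K)_{\tf}$ to account for the torsion factor. You spell out the steps in more detail, but the decomposition and the key ingredients are identical to the argument in the paper.
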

\begin{proof}
This is immediate after tracking the definitions. As $\underline{n} \in \Z^r$ varies, $x_{\underline{n}}=x_{\tilde {t}}+\sum_i n_ix_i$ runs over all the points of $\J(\oh_K)$ that reduce to $j_b(u)$, and $D(\underline{n})$ provides a single  point in $\TT(\oh_K)_{j_b(u)}$ lying above $x_{\underline{n}}$. In particular, the map $D$ is injective. To get all the points of $\TT(\oh_K)_{j_b(u)}$, one needs to move these around by the simply transitive $\G_m^{\rho-1}(\oh_K)$-action. Since $E(\underline{m}, \underline{n})=\zeta_{\underline{m}}\cdot D(\underline{n})$ accounts for the torsion-free part of the action by the above discussion, what is left is the torsion part, hence the factor $\oh_{K, \tors}^{\times, \rho-1}$.
\end{proof}

\subsubsection{Construction of the map $E'$}\label{ss:Eprime}

For the purpose of computing rational points, we wish to parametrize $\TT(\oh_K)_t$ instead of all of $\TT(\oh_K)_{j_b(u)}$. In this section, we modify the choices of initial points in the above construction of the map $E$ to obtain a map $E'$ that has the advantage that it lands in the correct residue disk, i.e., such that $E'(\underline{m}, \underline{n})$ reduces to $t$ in $\TT(\overline{\oh_{K, p}})$ for all $(\underline{m}, \underline{n}) \in \Z^{\delta(\rho-1)+r}$. 

The starting point is the following observation, which asserts that the desired property is already satisfied by $E$ on a certain finite index subgroup of $\Z^{\delta(\rho-1)+r}$. 

\begin{proposition}\label{prop:q-1-kills}
Let $q^*$ be the exponent of $\G_m(\overline{\oh_{K, p}}),$ or in other words the least common multiple of $q_i-1=\# \mathbb{F}_{\mathfrak{p}_i}-1$ for $i\in \{ 1, 2, \dots, s\}$. Then 
$$E(q^*\underline{m}, q^*\underline{n})\in \TT(\oh_K)_{t}, \qquad \text{ for all } (\underline{m}, \underline{n})\in \Z^{\delta(\rho-1)+r}.$$
\end{proposition}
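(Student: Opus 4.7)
Since $E(\underline{m},\underline{n})=\zeta_{\underline{m}}\cdot D(\underline{n})$ and the $\G_m^{\rho-1}$-action on $\TT$ is compatible with reduction, the statement $E(q^*\underline{m},q^*\underline{n})\in\TT(\oh_K)_t$ is equivalent to the assertion that $\overline{E(q^*\underline{m},q^*\underline{n})}=t$ in $\TT(\overline{\oh_{K,p}})$. The plan is to split this into two claims: (i) $\overline{\zeta_{\underline{m}}^{q^*}}=1$ in $\G_m^{\rho-1}(\overline{\oh_{K,p}})$, and (ii) $\overline{D(q^*\underline{n})}=t$. Claim (i) is immediate: $\zeta_{q^*\underline{m}}=\zeta_{\underline{m}}^{q^*}$, and $q^*$ is by definition the exponent of $\G_m^{\rho-1}(\overline{\oh_{K,p}})$. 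The remaining work is therefore to analyze the reduction of $D(\underline{n})$.

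The key observation for (ii) is that each basis element $x_i$ lies in the kernel of reduction $\J(\oh_K)_0$, so $\overline{x_i}=0$ in $\J(\overline{\oh_{K,p}})$. Consequently, the reductions of the initial points $P_{i,j}, R_i, S_j$ lie above the base points $(0,0)$, $(0,hm(\underline{c}+\underline{f}(j_b(u))))$, and $(j_b(u),0)$ respectively. By the birigidification of $\PP^\times$ and the splittings
$$\PP^\times|_{\J\times\{0\}}\simeq\G_{m,\J},\qquad \PP^\times|_{\{0\}\times\Jo}\simeq\G_{m,\Jo}$$
discussed in Section~\ref{S:PoincareTorsor} (formula (\ref{rem:internal-Gm-action})), we may identify $\overline{P_{i,j}},\overline{R_i},\overline{S_j}$ with elements $\bar p_{i,j},\bar r_i,\bar s_j\in\G_m^{\rho-1}(\overline{\oh_{K,p}})$.

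Next, I would exploit the biadditivity of $+_1,+_2$ on these fibers (a direct consequence of the biextension axioms and the compatibility (\ref{GmAndGrpLawsCommute})) to compute: $\overline{A(\underline{n})}$ corresponds to $\prod_j \bar s_j^{n_j}$ in the fiber over $(j_b(u),0)$, $\overline{B(\underline{n})}$ to $\prod_i \bar r_i^{n_i}$ in the fiber over $(0,hm(\underline{c}+\underline{f}(j_b(u))))$, and $\overline{C(\underline{n})}$ to $\prod_{i,j}\bar p_{i,j}^{n_in_j}$ in the fiber over $(0,0)$. Replacing $\underline{n}$ by $q^*\underline{n}$, each exponent is divisible by $q^*$ (in the case of $C$, by $(q^*)^2$), so all three reductions become the trivial element of the respective $\G_m^{\rho-1}$-fiber, i.e., they equal the $+_1$/$+_2$-identity sections forced by the birigidification.

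Finally, I would plug this into the formula $D(\underline{n})=(C(\underline{n})+_2B(\underline{n}))+_1(A(\underline{n})+_2\tilde t)$. Since $\overline{\tilde t}=t$, $\overline{A(q^*\underline n)}+_2 t=t$ (identity for $+_2$ in the fiber above $x_{\tilde t}$), $\overline{C(q^*\underline n)}+_2\overline{B(q^*\underline n)}=\overline{B(q^*\underline n)}$ is the $+_1$-identity above $(0,hm(\underline c+\underline f(j_b(u))))$, and therefore $\overline{D(q^*\underline n)}=t$. The main obstacle is bookkeeping: one must carefully verify that the partial laws $+_1,+_2$ are well-defined at each step (requiring coinciding first or second base coordinates), and that the identifications of fibers with $\G_m^{\rho-1}$ via the two birigidifications agree over the origin $(0,0)$, which is precisely the content of the birigidification compatibility recalled in Section~\ref{S:PicBun}.
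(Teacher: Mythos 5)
Your proof is correct and takes essentially the same approach as the paper: reduce each of $A,B,C$ (and then $D$) modulo $p$, observe that the initial points reduce into the trivial $\G_m^{\rho-1}$-fibers over $(j_b(u),0)$, $(0,\ast)$ and $(0,0)$, and use bilinearity to kill the reductions once $q^*$ divides all exponents. The only cosmetic difference is that you handle the $\underline{m}$-part via $E(\zeta,x)=\zeta\cdot D(x)$ and the observation that $\zeta_{q^*\underline{m}}=\zeta_{\underline{m}}^{q^*}$ reduces to $1$, whereas the paper unwinds the $U_{k,l}$ expression from (\ref{explicitE}); these are equivalent, and your version is marginally cleaner.
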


\begin{proof}
We need to show that $E(q^*\underline{m}, q^*\underline{n})$ reduces to the point $t$ in $\TT(\overline{\oh_{K, p}})$. To that end, we consider the elements
$$A(q^*\underline{n}),
\qquad\qquad\qquad B(q^*\underline{n}),
\qquad\qquad \qquad C(q^*\underline{n}),
\qquad\quad U_{k, l}(q^*\underline{n}),$$
lying respectively in the fibers of the $\overline{\oh_{K, p}}^{\times, \rho-1}$-torsor $\PP^{\times, \rho-1}(\overline{\oh_{K, p}})$ above the points 
$$(j_b(u), 0),
\qquad(0, ({hm\cdot}\circ \tr_{\underline{c}}\circ \underline{f})(j_b(u))), 
\qquad \quad (0, 0), 
\qquad \quad (j_b(u), 0).$$
The $\overline{\oh_{K, p}}^{\times,\rho-1}$-torsors obtained from $\PP^{\times, \rho-1}$ by taking the fibers over each of these points in $\J\times \Jo(\overline{\oh_{K, p}})$ are all trivial, as at least one coordinate is zero in each case. That is, they are groups isomorphic to $\overline{\oh_{K, p}}^{\times, \rho-1}$, whose group operation is given by $+_2$ in the cases of $A$ and the $U_{k, l}$'s, by $+_1$ in the case of $B$, and by either of the two operations in the case of $C$, since $+_1$ and $+_2$ agree above the point $(0, 0)$. By linearity of their definitions, we obtain
$$A(q^*\underline{n})=q^{*}\cdot_2 A(\underline{n})=1,  \qquad B(q^*\underline{n})=q^{*}\cdot_1 B(\underline{n})=1, \qquad U_{k, l}(q^*\underline{n})=q^*\cdot_2U_{k, l}(\underline{n})=1, $$
as elements of $\overline{\oh_{K, p}}^{\times, \rho-1}.$ Finally, for $C$ we have
$$C(q^*\underline{n})=q^*\cdot_1\left({\sum}_{1, i}n_i \cdot_1 \left({\sum}_{2, j}q^*n_j\cdot_2 P_{i, j}\right)\right)=1.$$
Putting these equations together, we obtain
$$D(q^*\underline{n})=(1+_21)+_1(1+_2t)=t.$$ 
Beware of the clash of additive and multiplicative notations here. We therefore have
$$E(q^*\underline{m}, q^*\underline{n})=q^* \cdot_2\left({\sum}_{2, k, l}m_{k, l}\cdot_2U_{k, l}(q^*\underline{n})\right)+_2 D(q^*\underline{n})=1+_2t=t\;.$$ 
This verifies the claim.\end{proof}

To get the desired map $\G_m(\oh_K)_{\mathrm{tf}}^{\rho-1} \times \J(\oh_K)_{j_b(u)}\simeq \Z^{\delta(\rho - 1)+ r} \lra \TT(\oh_K)_t$, which agrees with $E$ on $q^* \Z^{\delta(\rho - 1)+ r}$, is strictly speaking not possible. We can however still obtain a map $E'$ on the entire group $\Z^{\delta(\rho-1)+r}$ that agrees with $E$ on the subgroup $q^* \Z^{\delta(\rho - 1)+ r}$, but at the cost of allowing $p$-adic coefficient, i.e., a map 
$$
E' : \G_m(\oh_K)_{\mathrm{tf}}^{\rho-1} \times \J(\oh_K)_{j_b(u)}\simeq \Z^{\delta(\rho - 1)+ r} \lra \TT(\oh_{K,p})_t.
$$
The following proposition is the key to defining this map.

\begin{proposition}\label{adjustE}
For any $(\underline{m},\underline{n})\in \Z^{\delta(\rho-1)+r}$, there is a unique $(\rho-1)$-tuple of roots of unity of prime-to-$p$ orders $\xi(\zeta_{\underline{m}}, x_{\underline{n}})=\xi(\underline{m},\underline{n}) \in \oh_{K, p}^{\times, \rho-1}$ such that $\xi(\underline{m}, \underline{n})\cdot E(\underline{m}, \underline{n})$ belongs to $\TT(\oh_{K, p})_t$.
\end{proposition}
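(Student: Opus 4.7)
The plan is to construct $\xi(\underline{m}, \underline{n})$ as a Teichm\"uller-type lift of the obstruction measuring the failure of $E(\underline{m}, \underline{n})$ to reduce to $t$. Since $E(\underline{m}, \underline{n}) \in \TT(\oh_K)_{j_b(u)}$, its reduction modulo $p$ lies in the same fiber of the $\G_m^{\rho-1}$-torsor $\TT \to \J$ over $j_b(u) \in \J(\overline{\oh_{K, p}})$ as the point $t$ itself.

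The first step is to show that the obstruction takes values in a finite group of order coprime to $p$. Because $\overline{\oh_{K, p}} \simeq \prod_i \F_{\mathfrak{p}_i}$ is a product of fields, every $\G_m^{\rho-1}$-torsor over $\spec \overline{\oh_{K, p}}$ is trivial by Hilbert 90 applied componentwise. Consequently the fiber in question becomes a principal homogeneous space for $(\overline{\oh_{K, p}})^{\times, \rho-1} = \prod_i \F_{\mathfrak{p}_i}^{\times, \rho-1}$, and there is a unique
\[
\bar\eta(\underline{m}, \underline{n}) \in (\overline{\oh_{K, p}})^{\times, \rho-1} \quad \text{with} \quad \bar\eta(\underline{m}, \underline{n}) \cdot \overline{E(\underline{m}, \underline{n})} = t.
\]
The order of this group divides $\prod_i (q_i - 1)^{\rho-1}$ and is therefore coprime to $p$.

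Next, I would lift $\bar\eta(\underline{m}, \underline{n})$ via the Teichm\"uller character. Each $\oh_{K, \mathfrak{p}_i}$ is a complete discrete valuation ring of mixed characteristic, so the reduction $\oh_{K, \mathfrak{p}_i}^\times \twoheadrightarrow \F_{\mathfrak{p}_i}^\times$ has a unique multiplicative section whose image is the group of $(q_i - 1)$-st roots of unity. Taking the product over $i$ and over the $\rho - 1$ factors yields a canonical lift $\xi(\underline{m}, \underline{n}) \in \oh_{K, p}^{\times, \rho-1}$, which is a root of unity of prime-to-$p$ order (the same order as $\bar\eta(\underline{m}, \underline{n})$). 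By construction $\xi(\underline{m}, \underline{n}) \cdot E(\underline{m}, \underline{n})$ reduces modulo $p$ to $t$, so it belongs to $\TT(\oh_{K, p})_t$.

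For uniqueness, any prime-to-$p$ root of unity $\xi'$ satisfying the same property must reduce modulo $p$ to $\bar\eta(\underline{m}, \underline{n})$ by freeness of the torsor action, and therefore must coincide with $\xi(\underline{m}, \underline{n})$ by uniqueness of the Teichm\"uller lift. There is no substantial obstacle here; the key observation is simply that because $\overline{\oh_{K, p}}$ is a product of fields, the discrepancy between $\overline{E(\underline{m}, \underline{n})}$ and $t$ lives in a finite group of order coprime to $p$ that admits a canonical section into $\oh_{K, p}^\times$.
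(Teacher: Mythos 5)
Your proposal is correct and follows essentially the same argument as the paper: identify the unique element $\bar\eta(\underline{m},\underline{n})\in\G_m^{\rho-1}(\overline{\oh_{K,p}})$ carrying $\overline{E(\underline{m},\underline{n})}$ to $t$ via the simply transitive torsor action on $\TT(\overline{\oh_{K,p}})_{j_b(u)}$, then lift it by the Teichm\"uller section $\iota$ into the prime-to-$p$ roots of unity, with uniqueness following from freeness of the action together with injectivity of $\iota$. The only cosmetic difference is that you invoke Hilbert 90 for the triviality of the $\G_m^{\rho-1}$-torsor over $\overline{\oh_{K,p}}$, whereas the paper simply uses that the fiber already contains the rational point $t$; both observations yield the simply transitive action.
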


\begin{proof}
There is a unique multiplicative lift of units 
\begin{equation}\label{eq:liftiota}
\iota:{\overline{\oh_{K, p}}^{\times} = \mathbb{F}_{\mathfrak{p}_1}^\times \times \dots \times \mathbb{F}_{\mathfrak{p}_s}^\times \longhookrightarrow \oh_{K, \mathfrak{p}_1}^\times\times \dots \times \oh_{K, \mathfrak{p}_s}^\times=\oh_{K, p}^{\times}},
\end{equation}
which is
right inverse to the reduction map, and which maps precisely onto the prime-to-$p$ part of the roots of unity in $\oh_{K, p}$. 
Denote by $\iota$ the induced map $\G_m^{\rho-1}(\overline{\oh_{K, p}})\lra \G_m^{\rho-1}(\oh_{K, p})$ also. 
Since the action of $\G_m^{\rho-1}(\overline{\oh_{K, p}})$ on $\TT(\overline{\oh_{K, p}})_{j_b(u)}$ (i.e., the fiber of ${\TT(\overline{\oh_{K, p}})}$ containing $t$) is simply transitive, it follows that each $\iota(\G_m^{\rho-1}(\overline{\oh_{K, p}}))$-orbit of $\TT(\oh_{K, p})_{j_b(u)}$ contains a unique point of $\TT(\oh_{K, p})_t$. The assertion follows.
\end{proof}

\begin{definition}\label{def:mapEprime}
Given the above notations, we define the map 
$$
E' : \G_m(\oh_K)_{\mathrm{tf}}^{\rho-1} \times \J(\oh_K)_{j_b(u)}\simeq \Z^{\delta(\rho - 1)+ r} \lra \TT(\oh_{K,p})_t, \qquad E'(\underline{m},\underline{n}):=\xi(\underline{m}, \underline{n})\cdot E(\underline{m}, \underline{n}),
$$
where $\xi(\underline{m}, \underline{n})$ is the $(\rho-1)$-tuple of roots of unity of prime-to-$p$ orders of Proposition \ref{adjustE}.
\end{definition}

\begin{remark}
Note that the definition of $E'$ depends on the fixed basis $\mathbf{x}$ of $\J(\oh_K)_0$ and the initial choice of finitely many points of $\P^{\rho-1}(\oh_K)$ needed to define the map $D$. However, it does not depend on the choice of the basis $\mathbf{u}$ of $\G_m(\oh_K)_{\tf}$.
\end{remark}

\begin{proposition} \label{prop:modify-E-by-local-units2}
The map $E'$ satisfies the following properties:
\begin{enumerate}
\item The inclusions 
$\TT(\oh_K)_t \subseteq E'(\Z^{\delta(\rho-1)+r}) \subseteq \TT(\oh_{K, p})_t$
hold,
where $\TT(\oh_K)$ is viewed as a subset of $\TT(\oh_{K, p})$ via the canonical map.  
\item We have the equality $\xi(q^*\Z^{\delta(\rho-1)+r})=1$. In particular, the maps $E$ and $E'$ agree on the subgroup $q^*\Z^{\delta(\rho-1)+r}$. 
\end{enumerate}
\end{proposition}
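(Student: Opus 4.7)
Part (2) will follow almost immediately from the work already done in Proposition \ref{prop:q-1-kills}, while part (1) requires a slightly more delicate argument exploiting Assumption \ref{passumption}(c) together with the simple transitivity of the $\G_m^{\rho-1}$-action on the fibers of $\TT \to \J$.

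I will begin with part (2). Proposition \ref{prop:q-1-kills} says that $E(q^*\underline{m}, q^*\underline{n}) \in \TT(\oh_K)_t \subseteq \TT(\oh_{K,p})_t$. Hence the element $\xi = 1 \in \oh_{K,p}^{\times,\rho-1}$ already has the property required by Proposition \ref{adjustE}, namely that $1 \cdot E(q^*\underline{m}, q^*\underline{n}) \in \TT(\oh_{K,p})_t$. By uniqueness of such $\xi$ (and noting $1$ is in the image of $\iota$), we conclude $\xi(q^*\underline{m}, q^*\underline{n}) = 1$, and therefore $E'(q^*\underline{m}, q^*\underline{n}) = E(q^*\underline{m}, q^*\underline{n})$.

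For part (1), the right inclusion $E'(\Z^{\delta(\rho-1)+r}) \subseteq \TT(\oh_{K,p})_t$ is precisely the content of Proposition \ref{adjustE} combined with Definition \ref{def:mapEprime}. For the left inclusion, I take an arbitrary $z \in \TT(\oh_K)_t$. Since $\TT(\oh_K)_t \subseteq \TT(\oh_K)_{j_b(u)}$, Proposition \ref{prop:bijective-E} provides unique $\varepsilon \in \oh_{K,\tors}^{\times, \rho-1}$ and $(\underline{m},\underline{n}) \in \Z^{\delta(\rho-1)+r}$ such that $z = \varepsilon \cdot E(\underline{m},\underline{n})$. The key observation is that, by Assumption \ref{passumption}(c), the order of $\varepsilon$ is coprime to $p$; thus its image in $\oh_{K,p}^{\times,\rho-1}$ (under the natural inclusion) is a root of unity of prime-to-$p$ order, and so it lies in the image of the multiplicative lift $\iota$ described in the proof of Proposition \ref{adjustE}. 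Put differently, $\varepsilon = \iota(\bar\varepsilon)$, where $\bar\varepsilon$ is its reduction.

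Now I will use uniqueness of $\xi$ to identify $\varepsilon$ with $\xi(\underline{m},\underline{n})$. Since $z \in \TT(\oh_{K,p})_t$, the element $\varepsilon$ satisfies exactly the property characterizing $\xi(\underline{m},\underline{n})$ in Proposition \ref{adjustE}: it lies in $\iota(\G_m^{\rho-1}(\overline{\oh_{K,p}}))$ and $\varepsilon \cdot E(\underline{m},\underline{n}) \in \TT(\oh_{K,p})_t$. Uniqueness then forces $\varepsilon = \xi(\underline{m},\underline{n})$ in $\oh_{K,p}^{\times,\rho-1}$, and hence $z = \varepsilon \cdot E(\underline{m},\underline{n}) = \xi(\underline{m},\underline{n}) \cdot E(\underline{m},\underline{n}) = E'(\underline{m},\underline{n})$, which completes the proof. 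The only subtle point is making clean use of Assumption \ref{passumption}(c), which ensures that the torsion ambiguity in Proposition \ref{prop:bijective-E} is exactly absorbed by the adjustment $\xi$; this is where the proof would break down at the prime $p = 2$ in, e.g., a real quadratic field where a fundamental unit of order $2$ could obstruct the argument.
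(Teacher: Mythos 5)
Your proof is correct and follows essentially the same route as the paper's: part (2) via Proposition \ref{prop:q-1-kills} and uniqueness in Proposition \ref{adjustE}; part (1) via Proposition \ref{prop:bijective-E}, Assumption \ref{passumption}(c) to place $\varepsilon$ in the image of $\iota$, and then uniqueness of $\xi(\underline{m},\underline{n})$ to identify $\varepsilon=\xi(\underline{m},\underline{n})$. Your closing remark on why $p=2$ (or more generally $p\mid|\oh_{K,\tors}^\times|$) would break the argument is a nice elaboration not in the paper but fully consistent with it.
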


\begin{proof}
Part (2) follows directly from Propositions~\ref{prop:q-1-kills} and the uniqueness of $\xi(\underline{m}, \underline{n})$ in Proposition \ref{adjustE}. Let us prove (1). Given $Q$ in $\TT(\oh_K)_t \subseteq \TT(\oh_K)_{j_b(u)}$, there is by Proposition~\ref{prop:bijective-E} a unique $\varepsilon$ in $\oh_{K, \tors}^{\times, \rho-1}$ and a unique $(\underline{m}, \underline{n})$ in $\Z^{\delta(\rho-1)+r}$ such that $\varepsilon E(\underline{m}, \underline{n})=Q$. Using the fact that $\oh_{K, \tors}^{\times}$ embeds into the prime-to-$p$ part of $\oh_{K, p, \tors}^{\times}$ by Assumption \ref{passumption} $(c)$ that $p$ does not divide $\vert \oh_{K, \tors}^\times \vert$, it follows that $\varepsilon$ may be treated as a uniquely determined element of $\oh_{K, p}^{\times, \rho-1}$, whose order is finite and coprime to $p$. By the uniqueness of $\xi(\underline{m}, \underline{n})$ in Proposition \ref{adjustE}, we have $\varepsilon=\xi(\underline{m}, \underline{n}),$ so that 
$Q=\varepsilon E(\underline{m}, \underline{n})=\xi(\underline{m}, \underline{n})E(\underline{m}, \underline{n})=E'(\underline{m}, \underline{n}).$
\end{proof} 

\subsubsection{Description of $E'$ in terms of biextension laws}

An important feature for proving a $p$-adic interpolation statement (Theorem \ref{thm:pinterpolation}) is an explicit description of $E'$ in terms of the biextension laws. The following can also be taken as an alternative construction of the map $E'$.

The strategy for describing $E'$ is to modify the choices of the initial points in the construction of the maps $D$ and $E$. Note that the images $\overline{P_{i, j}}, \overline{R_i}, \overline{S_j}$ in ${\PP^{\times, \rho-1}(\overline{\oh_{K, p}})}$ lie over points of the form $(0, *), (0, *)$ and $(*, 0)$ respectively. The fibers over these points are canonically isomorphic to $\G_m^{\rho-1}(\overline{\oh_{K, p}})=\overline{\oh_{K, p}}^{\times, \rho-1}$ by the discussion in Section \ref{S:PoincareTorsor}. The neutral element $1$ in these fibers thus makes sense, and, e.g., there is a unique $\xi_{i, j} \in \G_m^{\rho-1}(\overline{\oh_{K, p}})$ such that $\xi_{i, j}\overline{P_{i, j}}=1$, and we set $P'_{i, j}=\iota(\xi_{i, j})P_{i, j}$, where $\iota$ is the lift \eqref{eq:liftiota}. One defines points $R'_i, S'_j \in \PP^{\times, \rho-1}(\oh_{K, p})$ in a similar fashion. We similarly modify the points $V_{k, l}$ and $W_{k, l, i}$ of Notation \ref{not:VKL}. Alternatively, one can multiply the chosen basis of the torsion-free part of $\oh_K$-units $\mathbf{u}=\{u_1, \dots, u_{\delta}\}$ by suitable roots of unity of prime-to-$p$ order in $\oh_{K, p}$, so that the resulting units are congruent to $1$ modulo $p\oh_{K, p}$.

Using these points, we define 
\[
A'(\underline{n})={\sum}_{2, j}n_j \cdot_2 S'_j,\;\;\;\; B'(\underline{n})={\sum}_{1, i}n_i \cdot_1 R'_i,\;\;\;\; C'(\underline{n})={\sum}_{1, i}n_i \cdot_1 \left( {\sum}_{2, j}n_j \cdot_2 P'_{i, j} \right),
\]
and 
\[
U'_{k, l}(\underline{n}):=V'_{k, l}+_1 \sum_{1, i}n_i\cdot_1 W'_{k, l, i}.
\]

\begin{proposition}\label{mapE'}
With the above notations, given $(\underline{m}, \underline{n})\in \Z^{\delta(\rho-1)+r}$, we have
\[
E'(\underline{m},\underline{n})=\left({\sum}_{2, k, l}m_{k, l}\cdot_2U'_{k, l}(\underline{n})\right)+_2 \left( \big(C'(\underline{n})+_2B'(\underline{n})\big)+_1\big(A'(\underline{n})+_2\tilde{t}\big) \right).
\]
\end{proposition}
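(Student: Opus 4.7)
The strategy is to rewrite the right-hand side of the claim as a root-of-unity multiple of $E(\underline{m},\underline{n})$ and then pin down that root of unity using the uniqueness assertion of Proposition \ref{adjustE}.

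By construction, each primed initial point is obtained from the corresponding unprimed one by multiplication by an element of $\iota(\G_m^{\rho-1}(\overline{\oh_{K,p}}))$, i.e., by a root of unity of prime-to-$p$ order in $\oh_{K,p}^{\times,\rho-1}$. The crucial bookkeeping tool is the compatibility \eqref{GmAndGrpLawsCommute} between the $\G_m^{\rho-1}$-action and the two partial composition laws, which iteratively implies $n\cdot_i(u\cdot a)=u^n(n\cdot_i a)$ for $i\in\{1,2\}$ whenever the iteration makes sense. First I would apply this to the definitions of $A', B', C'$ and $U'_{k,l}$ to pull the correction factors outside the sums, obtaining identities of the form $A'(\underline{n})=\alpha(\underline{n})A(\underline{n})$, $B'(\underline{n})=\beta(\underline{n})B(\underline{n})$, $C'(\underline{n})=\gamma(\underline{n})C(\underline{n})$, and $U'_{k,l}(\underline{n})=\omega_{k,l}(\underline{n})U_{k,l}(\underline{n})$ for explicit roots of unity $\alpha(\underline{n}),\beta(\underline{n}),\gamma(\underline{n}),\omega_{k,l}(\underline{n})\in\oh_{K,p}^{\times,\rho-1}$. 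Combining these via $+_1,+_2$ and reapplying \eqref{GmAndGrpLawsCommute} to the outer operations (after checking at each stage that the operands share the required coordinate), the right-hand side of the claim reduces to $\eta(\underline{m},\underline{n})\cdot E(\underline{m},\underline{n})$ for some $\eta(\underline{m},\underline{n})\in\oh_{K,p}^{\times,\rho-1}$ of prime-to-$p$ order.

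To conclude, I would verify that this expression lies in $\TT(\oh_{K,p})_t$ by computing its reduction modulo $p$. Each primed initial point was normalized precisely so that its reduction is the identity in the corresponding trivial fiber torsor; these fibers, lying over points of the form $(0,\ast)$, $(\ast,0)$ or $(0,0)$ after reduction, are canonically identified with $\overline{\oh_{K,p}}^{\times,\rho-1}$ via the biextension rigidifications. The same linearity argument as in the proof of Proposition \ref{prop:q-1-kills} then gives $\overline{A'(\underline{n})}=\overline{B'(\underline{n})}=\overline{C'(\underline{n})}=\overline{U'_{k,l}(\underline{n})}=1$, and hence the reduction of the right-hand side equals $(1+_2 1)+_1(1+_2 t)=t$, with the outer $\sum_{2,k,l}m_{k,l}\cdot_2 U'_{k,l}(\underline{n})$ contributing only a further factor of $1$. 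Since $\eta(\underline{m},\underline{n})\cdot E(\underline{m},\underline{n})\in\TT(\oh_{K,p})_t$ with $\eta(\underline{m},\underline{n})$ of prime-to-$p$ order, the uniqueness part of Proposition \ref{adjustE} forces $\eta(\underline{m},\underline{n})=\xi(\underline{m},\underline{n})$, and the right-hand side equals $\xi(\underline{m},\underline{n})\cdot E(\underline{m},\underline{n})=E'(\underline{m},\underline{n})$.

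The main obstacle is really the bookkeeping in the first step: each $+_1, +_2, \cdot_1, \cdot_2$ appearing in the nested definitions of $A', B', C', U'_{k,l}$ and their combination into the claimed formula carries a constraint on the relevant coordinate, so distributing the root-of-unity prefactors requires systematically checking these constraints before invoking \eqref{GmAndGrpLawsCommute}. Once this is accomplished, the reduction-and-uniqueness argument is what makes the proof efficient: it bypasses any need for an explicit closed-form expression of $\eta(\underline{m},\underline{n})$, replacing an algebraic computation by the uniqueness of the root of unity of prime-to-$p$ order taking $E(\underline{m},\underline{n})$ into the residue disk $\TT(\oh_{K,p})_t$.
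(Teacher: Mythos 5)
Your proposal is correct and follows essentially the same argument as the paper: both establish that the right-hand side differs from $E(\underline{m},\underline{n})$ by a root of unity of prime-to-$p$ order via an analogue of \eqref{GmAndGrpLawsCommute}, verify that it lands in $\TT(\oh_{K,p})_t$ by a reduction argument mirroring the proof of Proposition~\ref{prop:q-1-kills}, and then invoke the uniqueness in Proposition~\ref{adjustE} to conclude. The only superficial difference is the order of the first two steps, which does not affect the substance.
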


\begin{proof}
A formal computation similar to the proof of Proposition~\ref{prop:q-1-kills} shows that the expression defined by the right hand side lies in $\TT(\oh_{K, p})_{t}$. Since this expression is obtained by the same operations in terms of $+_1, +_2, \cdot_1,$ and $\cdot_2$ as $E(\underline{m}, \underline{n})$ (see \eqref{explicitE}), apart from the $\iota(\G_m^{\rho-1}(\overline{\oh_{K, p}}))$-action modification of the initial points, it follows from an analogue of (\ref{GmAndGrpLawsCommute}) that the right hand side expression differs from $E(\underline{m}, \underline{n})$ only by $\iota(\G_m^{\rho-1}(\overline{\oh_{K, p}}))$-action modification. In other words, the right hand side must equal $\xi'(\underline{m}, \underline{n})E(\underline{m}, \underline{n})$
for some $\xi'(\underline{m}, \underline{n}) \in \iota(\G_m^{\rho-1}(\overline{\oh_{K, p}}))$. Using the uniqueness of $\xi(\underline{m}, \underline{n})$ in Proposition \ref{adjustE}, this proves the indicated equality.
\end{proof}

\subsection{The $p$-adic interpolation}\label{s:interpolation}

The remaining part of this section aims to prove Theorem \ref{thm:pinterpolation}. This is done along the same lines as \cite[\S 3 \& \S 5.1]{EL19}, but in a more general context. We will use the following result, whose proof will be given shortly, to deduce Theorem \ref{thm:pinterpolation}.  

\begin{proposition}\label{cor:AllPrimesInterpolation}
\hfill
\begin{enumerate}
\item Let $X$ and $Y$ be smooth schemes over $\oh_K$ of relative dimensions $m$ and $n$ respectively. Let $f: X \lra Y$ be a morphism of $\oh_K$-schemes and let $x \in X(\overline{\oh_{K,p}})$ be a point. 
Any choice of local parameters (see Notation \ref{localpar}) followed by restriction of scalars (see Remark \ref{FunctionRestriction}) induces identifications $X(\oh_{K,p})_x \simeq \Z_p^{dm}$ and $Y(\oh_{K,p})_{f(x)} \simeq \Z_p^{dn}$
such that the composition of maps 
$$\Z_p^{dm}\simeq X(\oh_{K, p})_x \overset{f}{\lra} Y(\oh_{K, p})_{f(x)}\simeq \Z_p^{dn}$$ 
is given by convergent power series with $\Z_p$-coefficients. 

\item Let $G\lra Y$ be a smooth group scheme with identity section $e$, where $Y$ is smooth over $\oh_K$. Let $y \in Y(\overline{\oh_{K, p}})$ be a point. The map $$\mathbb{Z} \times G(\oh_{K, p})_{e(y)}\lra G(\oh_{K, p})_{e(y)}, \qquad (z, g) \mapsto z \cdot g$$ extends to a map $\mathbb{Z}_p \times G(\oh_{K, p})_{e(y)}\lra G(\oh_{K, p})_{e(y)}$, which describes the $\Z_p$-module action on fibers over $Y(\oh_{K, p})_y$, and which is given by convergent power series with $\Z_p$-coefficients after choosing local parameters for $G\lra Y$ at $e(y)$ and restricting scalars.
\end{enumerate}
\end{proposition}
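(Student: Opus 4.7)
The plan is to reduce both statements to standard facts about smooth schemes and formal groups over $\oh_{K,p}$, handled prime by prime via the decomposition $\oh_{K,p} = \prod_{i} \oh_{K, \mathfrak{p}_i}$.

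For Part (1), I would use the smoothness of $X$ and $Y$ to produce formal coordinates. Working at each prime $\mathfrak{p}_i$ separately, choose local parameters $t_1, \ldots, t_m \in \oh_{X \otimes \oh_{K, \mathfrak{p}_i}, x_i}$ for $X$ relative to $\oh_K$ at $x_i$, and similarly $s_1, \ldots, s_n$ for $Y$ at $f(x_i)$. Smoothness gives an isomorphism of the completed local ring $\widehat{\oh}_{X, x_i} \simeq \oh_{K, \mathfrak{p}_i}\llbracket t_1, \ldots, t_m\rrbracket$, under which the residue disk $X(\oh_{K, \mathfrak{p}_i})_{x_i}$ is identified with $\mathfrak{m}_{\mathfrak{p}_i}^m$ by evaluating the $t_j$'s. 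Each pullback $f^{\sharp} s_j \in \widehat{\oh}_{X, x_i}$ becomes a power series in $\oh_{K, \mathfrak{p}_i}\llbracket t_1, \ldots, t_m\rrbracket$, and the morphism $f$ on the residue disk is given by evaluating these $n$ series. Convergence on $\mathfrak{m}_{\mathfrak{p}_i}^m$ is immediate because the coefficients lie in $\oh_{K,\mathfrak{p}_i}$. Finally, fixing a $\Z_p$-basis of each $\oh_{K, \mathfrak{p}_i}$ converts each variable $t_j$ into $k_i$ variables over $\Z_p$, turning every coefficient into a $k_i$-tuple of elements of $\Z_p$; assembling over all $\mathfrak{p}_i \mid p$ and using $d = \sum_i k_i$ yields the claimed presentation in $dm$ (resp. $dn$) variables with $\Z_p$-coefficients.

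For Part (2), the strategy is to pass to the formal group of $G$ along $e$. Choose local parameters $T_1, \ldots, T_n$ for $G$ relative to $Y$ at $e(y)$, giving (after completing along $e$) a formal group $\widehat{G}$ over $\widehat{\oh}_{Y, y}$ with formal group law $\mu \in \widehat{\oh}_{Y, y}\llbracket\vec{T}, \vec{T}'\rrbracket^n$. For $z \in \Z$, the multiplication-by-$z$ operation $[z]$ is a morphism of formal groups, given by a power series obtained by iterating $\mu$; evaluating on a point $g$ in the residue disk of $e(y)$ (which lies in the fiber $G_{\pi(g)}$ over some $\pi(g) \in Y(\oh_{K,p})_y$) recovers the $\Z$-action of the statement. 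To extend to $\Z_p$, I would invoke the formal logarithm and exponential of $\widehat{G}$: under the ramification hypothesis $e(\mathfrak{p}_i / p) < p - 1$ from Assumption \ref{passumption}(b), both $\log_{\widehat{G}}$ and $\exp_{\widehat{G}}$ are integral (i.e. defined over $\oh_{K, \mathfrak{p}_i}$) and mutually inverse on the residue disk; this is the content of Proposition \ref{logexp}. One then sets $[z](g) := \exp_{\widehat{G}}\bigl(z \cdot \log_{\widehat{G}}(g)\bigr)$ for $z \in \Z_p$, which agrees with the iterated $\mu$ on $\Z \subset \Z_p$ and is manifestly given by a convergent power series in $(z, g)$ with $\oh_{K, p}$-coefficients (since scalar multiplication on the additive formal group is a polynomial in the Lie-algebra coordinates). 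Restriction of scalars as in Part (1) then produces $\Z_p$-coefficients.

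The main obstacle lies in Part (2): establishing that the $\Z$-action extends to a $\Z_p$-action given by convergent \emph{integral} power series. This is precisely where Assumption \ref{passumption}(b) on the ramification degree is used, through the integrality of the formal logarithm and exponential on the residue disk. Part (1) is essentially a formal consequence of the local structure of smooth morphisms, and the scalar-restriction step is purely bookkeeping once the integrality of the Taylor coefficients is established.
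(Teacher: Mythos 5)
Your overall structure matches the paper's: both parts are reduced to prime-by-prime analysis via $\oh_{K,p} = \prod_i \oh_{K,\mathfrak{p}_i}$, and Part (2) is handled by citing Proposition \ref{logexp} (which is exactly what the paper does). The proof of Proposition \ref{cor:AllPrimesInterpolation} in the paper is itself a short assembly argument deferring the real work to Proposition \ref{prop:blowup} and Proposition \ref{logexp}, and your proposal has the same shape.

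The substantive difference is in how Part (1) is grounded. The paper's Proposition \ref{prop:blowup} works through the blow-up $\Bl_x^{(\pi)}X$ and its distinguished open $\Bl_x^{(\pi)}X$ on which $\pi$ generates the exceptional divisor; the isomorphism (\ref{p-comp-iso}) then identifies the relevant completed ring with the Tate algebra $\oh_{K,\mathfrak{p}}\langle\tilde{X}_1,\ldots,\tilde{X}_m\rangle$, which is precisely the ring of power series converging on the \emph{closed} unit polydisc. You instead work directly with the completed local ring $\oh_{K,\mathfrak{p}}\llbracket t_1,\ldots,t_m\rrbracket$ and observe that formal power series converge on $\mathfrak{m}_\mathfrak{p}^m$. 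That is true but it is a weaker statement: those series lie a priori in the formal power series ring, not in the Tate algebra, and the content of Notation \ref{localpar} is precisely the rescaling $\tilde{t}_i = t_i/\pi$ that turns $X(\oh_{K,\mathfrak{p}})_x \simeq \mathfrak{m}_\mathfrak{p}^m$ into $X(\oh_{K,\mathfrak{p}})_x \simeq \oh_{K,\mathfrak{p}}^m$. Your proposal skips the rescaling and moves straight to restriction of scalars. This is where a small gap appears: if you do not carry out the substitution $t_i = \pi\tilde{t}_i$ and divide the target parameters by $\pi$ as well, you will not land in $\Z_p\langle z_1,\ldots,z_{dm}\rangle$. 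The reason the rescaled series does land there is twofold and should be made explicit: (a) $f^\sharp s_j$ has vanishing constant term since $x \mapsto f(x)$ lies on the exceptional fiber, so the divided series has integral coefficients; and (b) the coefficient of $\tilde{t}^J$ picks up a factor $\pi^{|J|-1}$, which tends to $0$ $p$-adically, forcing convergence on the closed polydisc. The paper's blow-up formalism packages exactly these two facts. Your argument would be complete if you inserted the explicit $\pi$-rescaling step; as written it proves that the map is given by formal power series with $\oh_{K,\mathfrak{p}}$-coefficients convergent on the open disc, which is a weaker conclusion than the Tate-algebra statement the proposition requires.

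One further minor omission in Part (2): the paper is careful to note that after splitting over the primes $\mathfrak{p}_i \mid p$, the $\Z_p$-action on $G(\oh_{K,p})_{e(y)}$ is obtained by precomposing the product of the prime-wise action maps with the \emph{diagonal} embedding $\Z_p \hookrightarrow \prod_i \Z_p$. Your "assembling over all $\mathfrak{p}_i \mid p$" leaves this implicit. It is a bookkeeping point but worth stating, since a single $z \in \Z_p$ must act coherently across all local factors.
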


\begin{remark}
We postpone the proof of this result to the end of this section. The proof of $(1)$ relies on the description of local parameters at a point $x$ using blow-ups. The proof of $(2)$ uses the formal logarithm and exponential maps to interpret the action $z\cdot g$ as $\exp(z\cdot \log(g))$. Since $\exp$ and $\log$ are given by convergent power series (see Proposition \ref{logexp} below), one can extend $z\cdot g$ to allow $\Z_p$-coefficients. The proofs are technical and quite general. For the sake of clarity and readability, we have chosen to defer them to after the proof of Theorem \ref{thm:pinterpolation}.
\end{remark}

\begin{proof}[Proof of Theorem \ref{thm:pinterpolation}]
By Proposition \ref{mapE'}, the map $E'$ is described by the operations $+_1$ and $+_2$, and their iterates $\cdot_1$ and $\cdot_2$. 
Proposition \ref{cor:AllPrimesInterpolation} (2) applied respectively to the group schemes $\PP^{\times} \lra \Jo$ and $\PP^{\times} \lra \J$, implies that the operations $(n, g)\mapsto n\cdot_1 g$ and $(n, g)\mapsto n\cdot_2 g$ for $n\in \Z$ extend to $n\in \Z_p$, and the resulting operations are given by convergent power series with $\Z_p$-coefficients. The expression in Proposition \ref{mapE'} therefore makes sense with $(\underline{m}, \underline{n})\in \Z_p^{\delta(\rho-1)}\times \Z_p^{r}$. Allowing for $\Z_p$-coefficients using the extended actions $\cdot_1$ and $\cdot_2$ thus gives rise via the formula in Proposition \ref{mapE'} to the desired map 
\[
\kappa:=\kappa_{\mathbf{x}, \mathbf{u}}: \Z_p^{\delta(\rho-1)+r} \lra \TT(\oh_{K,p})_t,
\]
\[
\kappa(\underline{m},\underline{n}):=\left({\sum}_{2, k, l}m_{k, l}\cdot_2U'_{k, l}(\underline{n})\right)+_2 \left( \big(C'(\underline{n})+_2B'(\underline{n})\big)+_1\big(A'(\underline{n})+_2\tilde{t}\big) \right),
\]
depending on the bases $\mathbf{x}$ of $\J(\oh_K)_0$ and $\mathbf{u}$ of $\G_m(\oh_K)_{\tf}$, and which by definition agrees with $E'$ when restricted to $\Z^{\delta(\rho-1)+r}\subset \Z_p^{\delta(\rho-1)+r}$.

By Proposition \ref{cor:AllPrimesInterpolation} (1), both operations 
\begin{align*}
& +_1: \PP^{\times, \rho-1}\times_{(\Jo)^{\rho-1}}\PP^{\times, \rho-1} \lra \PP^{\times, \rho-1},\;\; \\ 
& +_2: \PP^{\times, \rho-1}\times_{\J}\PP^{\times, \rho-1} \lra \PP^{\times, \rho-1}
 \end{align*} 
induce maps given by convergent power series over $\Z_p$ on the appropriate residue disks after choosing a regular system of local parameters inducing $\PP^{\times, \rho-1}(\oh_{K, p})_x \simeq \Z_p^{d(g+g(\rho-1)+\rho-1)}$ upon restricting scalars from $\oh_{K,p}$ to $\Z_p$. 
Since the composition of convergent power series with $\Z_p$-coefficients again produces convergent power series with $\Z_p$-coefficients, the map $\kappa$ is indeed given by a tuple of convergent $p$-adic power series.
\end{proof}

\subsubsection{Local parameters and blow-ups}

\begin{notation}\label{localpar}
We fix a prime $\mathfrak{p}\in \{\mathfrak{p}_1, \dots, \mathfrak{p}_s\}$ above $p$. Denote by $\pi$ a  uniformizer of $\oh_{K, \mathfrak{p}}$.
Let $X$ be a smooth scheme over $\oh_{K, \mathfrak{p}}$ of relative dimension $m$. Similarly to before, for a point $x \in X(\F_{\mathfrak{p}})$, we denote by $X(\oh_{K, \mathfrak{p}})_x$ the set of all $\oh_{K, \mathfrak{p}}$-points reducing to $x$ modulo $\mathfrak{p}$. 
By smoothness, the maximal ideal $\mathfrak{m}_x$ admits a regular system of parameters $(\pi, t_1, t_2, \dots, t_m)$. 
\end{notation}

The point $x$ factors through the natural map $\spec{\widehat{\oh_{X, x}}} \lra X$, and $X(\oh_{K, \mathfrak{p}})_x$ bijectively corresponds to $\spec{\widehat{\oh_{X, x}}}(\oh_{K,\mathfrak{p}})_x$. The isomorphism $  \oh_K[[t_1, \dots, t_m]]\simeq \widehat{\oh_{X, x}}$ then shows that there is a bijection of sets
\begin{align*}
t=(t_1, t_2, \dots, t_m):X(\oh_{K, \mathfrak{p}})_x &\stackrel{\sim}{\lra}  (\mathfrak{m}_{K, \mathfrak{p}})^{m} \\ 
\tilde{x}\;\; &\longmapsto  (t_1(\tilde{x}), \dots, t_m(\tilde{x})).
\end{align*}  
After dividing by $\pi$, one gets
\begin{equation} \label{eq:tilde_t} \tilde{t}=\left(\frac{t_1}{\pi}, \frac{t_2}{\pi}, \dots, \frac{t_m}{\pi}\right):X(\oh_{K, \mathfrak{p}})_x \stackrel{\sim}{\lra}  (\oh_{K, \mathfrak{p}})^{m}\;.
\end{equation} 

Let $f: X \lra Y$ be a morphism of schemes, which are smooth over $\oh_{K, \mathfrak{p}}$ of relative dimensions $m$ and $n$ respectively. Denote the analogous choice of a regular system of parameters of $Y$ at $f(x)$ by $s_1, s_2, \dots, s_{n}$, and the corresponding bijection by $$\tilde{s}: Y(\oh_{K, \mathfrak{p}})_{f(x)} \lra (\oh_{K, \mathfrak{p}})^{n}.$$ The immediate goal is the following.

\begin{proposition}\label{prop:blowup}
In the above setting, the composition $$f':(\oh_{K, \mathfrak{p}})^m\stackrel{\tilde{t}^{-1}}{\lra} X(\oh_{K, \mathfrak{p}})_x \stackrel{f}{\lra} Y(\oh_{K, \mathfrak{p}})_{f(x)} \stackrel{\tilde{s}}{\lra}(\oh_{K, \mathfrak{p}})^{n}$$
is given by a $n$-tuple of convergent power series with coefficients in $\oh_{K, \mathfrak{p}}$.
\end{proposition}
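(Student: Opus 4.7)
The plan is to reduce the statement to a computation on completed local rings. The smoothness of $X$ (resp.\ $Y$) over $\oh_{K,\mathfrak{p}}$ together with the chosen regular systems of parameters gives canonical $\oh_{K,\mathfrak{p}}$-algebra isomorphisms $\widehat{\oh_{X,x}} \simeq \oh_{K,\mathfrak{p}}[[t_1, \ldots, t_m]]$ and $\widehat{\oh_{Y, f(x)}} \simeq \oh_{K,\mathfrak{p}}[[s_1, \ldots, s_n]]$. Passing to completions, $f$ induces a continuous local $\oh_{K,\mathfrak{p}}$-algebra homomorphism
\begin{equation*}
f^\ast: \oh_{K,\mathfrak{p}}[[s_1, \ldots, s_n]] \lra \oh_{K,\mathfrak{p}}[[t_1, \ldots, t_m]],
\end{equation*}
sending each $s_j$ to some power series $F_j(t_1, \ldots, t_m) \in \oh_{K,\mathfrak{p}}[[t_1, \ldots, t_m]]$. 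Since $s_j$ vanishes at the point $f(x):\spec \F_\mathfrak{p} \to Y$, the constant coefficient $F_j(\mathbf 0)$ must lie in $\pi \oh_{K,\mathfrak{p}}$.

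Next, I would unpack the rescaled parametrization $\tilde t$ defined in \eqref{eq:tilde_t}. A point $\tilde x \in X(\oh_{K,\mathfrak{p}})_x$ corresponds to the $\oh_{K,\mathfrak{p}}$-algebra map $\widehat{\oh_{X,x}} \to \oh_{K,\mathfrak{p}}$ sending $t_i \mapsto \pi \tilde t_i(\tilde x)$ with $\tilde t_i(\tilde x) \in \oh_{K,\mathfrak{p}}$. Composing with $f^\ast$ gives $s_j(f(\tilde x)) = F_j(\pi \tilde t_1(\tilde x), \ldots, \pi \tilde t_m(\tilde x))$, and writing $F_j = \sum_{\vec n} a_{j, \vec n}\, t^{\vec n}$ with $a_{j, \vec n} \in \oh_{K,\mathfrak{p}}$ yields the explicit formula
\begin{equation*}
\tilde s_j(f(\tilde x)) \;=\; \frac{1}{\pi} F_j(\pi \tilde t_1(\tilde x), \ldots, \pi \tilde t_m(\tilde x)) \;=\; \frac{a_{j, \mathbf 0}}{\pi} \;+\; \sum_{\vec n \neq \mathbf 0} a_{j, \vec n}\, \pi^{|\vec n|-1}\, \tilde t(\tilde x)^{\vec n}.
\end{equation*}
The term $a_{j,\mathbf 0}/\pi$ lies in $\oh_{K,\mathfrak{p}}$ by the vanishing observation above, and all other coefficients $a_{j, \vec n}\, \pi^{|\vec n|-1}$ clearly lie in $\oh_{K,\mathfrak{p}}$.

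Finally, I would verify convergence on the closed unit polydisk $\oh_{K,\mathfrak{p}}^m$: the $\mathfrak{p}$-adic absolute value of the general coefficient is bounded above by $|\pi|^{|\vec n|-1}$ for $|\vec n|\geq 1$, hence tends to $0$ as $|\vec n|\to \infty$. Each of the $n$ resulting power series thus converges on $\oh_{K,\mathfrak{p}}^m$ with coefficients in $\oh_{K,\mathfrak{p}}$, which is exactly the claim. The only substantive point in the argument is careful bookkeeping of $\pi$-divisibility; geometrically this reflects the fact that the rescaling $t_i \mapsto \pi \tilde t_i$ amounts to taking an affine chart of the blow-up of $X$ along the closed point $x \in X_{\F_\mathfrak{p}}$, and the $\pi$-divisibility of $F_j(\mathbf 0)$ is exactly what allows $f$ to extend across this chart.
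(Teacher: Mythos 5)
Your proof is correct, and it reaches the conclusion by a more elementary, purely algebraic route than the paper. Where the paper shrinks $X$ to an affine \'{e}tale chart over $\mathbb{A}^m$, invokes the universal property of the blow-up $\mathrm{Bl}_x X$ to show every $\tilde x \in X(\oh_{K,\mathfrak{p}})_x$ factors through the chart $\mathrm{Bl}_x^{(\pi)} X$, and then identifies $\widehat{\oh(\mathrm{Bl}_x^{(\pi)} X)}$ with the Tate algebra $\oh_{K,\mathfrak{p}}\langle \tilde X_1,\dots,\tilde X_m\rangle$, you skip the geometry entirely: you pass to the completed local rings $\widehat{\oh_{X,x}}\simeq \oh_{K,\mathfrak p}[[\underline t]]$, observe that $f^\ast(s_j)=F_j$ has constant term in $(\pi)$ because $f^\ast$ is a local homomorphism, and check by hand that the rescaled series $\pi^{-1}F_j(\pi\underline z)$ has $\oh_{K,\mathfrak p}$-coefficients tending to zero. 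This is shorter and self-contained, and the $\pi$-divisibility bookkeeping is exactly where the content lies, as you note. What the paper's blow-up formulation buys in exchange is a clean geometric object to reuse downstream: the identification $\widehat{\oh(\mathrm{Bl}_x^{(\pi)} X)}\simeq\oh_{K,\mathfrak p}\langle\underline{\tilde X}\rangle$ is what makes Remark~\ref{rem:ParametrizeCoefficients} (the embedding $\oh_{X,x}\hookrightarrow\widehat{\oh(\mathrm{Bl}_x^{(\pi)} X)}$) and Remark~\ref{rem:InjectiveTangent} (describing $\U(\oh_{K,p})_u$ inside $\TT(\oh_{K,p})_t$ by vanishing of coordinates when the tangent map is injective) read off directly, and both of those are used in Section~\ref{s:end}. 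Your approach proves the proposition just as well but would require reformulating those later remarks in the language of formal completions rather than blow-up charts.
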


Here by convergent power series we mean elements of $\oh_{K, \mathfrak{p}}\langle X_1, X_2, \dots, X_m\rangle$, the $p$-adic, or equivalently $\pi$-adic, completion of $\oh_{K, \mathfrak{p}}[X_1, X_2, \dots, X_m]$. The proof of Proposition \ref{prop:blowup} follows closely \cite[\S 3]{EL19} by investigating the geometry of the situation.

\begin{proof}
By shrinking $X$ to a sufficiently small affine open neighbourhood of $x$, we may assume that $t_1, t_2, \dots, t_m$ are regular global functions, which define an \'{e}tale map $$t=(t_1, t_2, \dots, t_m): X \lra \mathbb{A}^m_{\oh_{K, \mathfrak{p}}}=\spec\oh_{K, \mathfrak{p}}[X_1, \dots, X_m],$$ mapping $x$ to the origin over $\F_{\mathfrak{p}}$, i.e., the point corresponding to $(\pi, X_1, \dots, X_d)$. By possibly shrinking $X$ further, we may assume that $x$ is in fact the only preimage of the origin. 

Note that a point $\tilde{x}: \spec{\oh_{K, \mathfrak{p}}}\lra X$ reduces to $x$ if and only if the pullback of $x$ along $\tilde{x}$ is the effective Cartier divisor cut out by $\pi$. Consequently, the universal property of the blowup $\mathrm{Bl}_x X$ of $X$ at $x$ implies that every $\tilde{x} \in X(\oh_{K,\mathfrak{p}})_x$ factors uniquely through $\mathrm{Bl}_x X$, and more precisely through the open subscheme $\mathrm{Bl}_x^{(\pi)} X$ of $\mathrm{Bl}_x X$, where $\pi$ is the generator of the exceptional divisor. 
Thus, we have a natural identification between $X(\oh_{K,\mathfrak{p}})_x$ and $\mathrm{Bl}_x^{(\pi)} X(\oh_{K, \mathfrak{p}})$. 

Up to this identification, the map $\tilde{t}$ can be described as follows. We consider the analogous construction for the $\F_{\mathfrak{p}}$-origin $o:\spec\F_{\mathfrak{p}}\lra \mathbb{A}^m_{\oh_{K, \mathfrak{p}}}$ from which we get $\mathrm{Bl}_o \mathbb{A}^m_{\oh_{K, \mathfrak{p}}}$ and 
$$\mathrm{Bl}_o^{(\pi)} \mathbb{A}^m_{\oh_{K, \mathfrak{p}}}=\spec\oh_{K, \mathfrak{p}}[\tilde{X}_1, \dots, \tilde{X}_m],$$ where in the expression above $\tilde{X}_i=X_i/\pi$. Since blowing up commutes with flat base change, we obtain a cartesian diagram of schemes
\begin{equation}\label{blowup}
\begin{tikzcd}
 \mathrm{Bl}_x^{(\pi)} X \arrow[hook]{r} \ar[d, "\tilde{t}"]\ar[dr, phantom, "\square"] & \mathrm{Bl}_x X  \ar[r] \arrow{d} \ar[dr, phantom, "\square"]
  & X \ar[d, "t"] \\
\mathrm{Bl}_o^{(\pi)} \mathbb{A}^m_{\oh_{K, \mathfrak{p}}}   \arrow[hook]{r}
  & \mathrm{Bl}_o \mathbb{A}^m_{\oh_{K, \mathfrak{p}}} \ar[r]
  & \mathbb{A}^m_{\oh_{K, \mathfrak{p}}}\;.
\end{tikzcd}  
\end{equation}
The map $\tilde{t}$ of (\ref{eq:tilde_t}) is simply the morphism $\tilde{t}$ in the above diagram evaluated at $\oh_{K, \mathfrak{p}}$-points. The notations are therefore compatible.


The map $\tilde{t}_{\F_{\mathfrak{p}}}$, obtained from base-changing the diagram (\ref{blowup}) to $\F_{\mathfrak{p}}$, can be non-canonically interpreted as the tangent map at $x$ between the respective tangent spaces. In particular, it is an isomorphism. Since $\tilde{t}$ is \'{e}tale, $\tilde{t}$ is an isomorphism when base-changed to ${\oh_{K, \mathfrak{p}}/(\pi^j)}$ for every $j$. Denoting the rings of global functions of the affine schemes in question by $\oh(\mathrm{Bl}_x^{(\pi)} X)$ and $\oh(\mathrm{Bl}_o^{(\pi)} \mathbb{A}^m_{\oh_{K,\mathfrak{p}}})$ respectively, we infer that their $\pi$-adic completions are the same, i.e.,
\begin{equation} \label{p-comp-iso}
\widehat{\oh(\mathrm{Bl}_x^{(\pi)} X)} \simeq \widehat{\oh(\mathrm{Bl}_o^{(\pi)} \mathbb{A}^m_{\oh_{K,\mathfrak{p}}})}=\widehat{\oh_{K, \mathfrak{p}}[\tilde{X}_1, \dots, \tilde{X}_m]}=\oh_{K, \mathfrak{p}}\langle\tilde{X}_1, \dots, \tilde{X}_m \rangle,
\end{equation}
both being equal to the algebra of integral formal power series converging on the unit disk.  

We perform the same analysis for $Y$, $f(x)$, and its fixed system of parameters $s_i$. Using again the universal property of the blowup of $Y$ at $f(x)$, we obtain that $f$ also induces a morphism 
$$\tilde{f}: \mathrm{Bl}_x^{(\pi)} X \lra \mathrm{Bl}_{f(x)}^{(\pi)} Y,$$ 
which on the level of $\oh_{K,\mathfrak{p}}$-points may be identified with $f: X(\oh_{K, \mathfrak{p}})_x\lra Y(\oh_{K, \mathfrak{p}})_{f(x)}$. Taking the $p$-adic completion of the associated ring map $\oh(\mathrm{Bl}_{f(x)}^{(\pi)} Y) \lra \oh(\mathrm{Bl}_{x}^{(\pi)} X)$ and conjugating by the isomorphisms (\ref{p-comp-iso}) for $X$ and $Y$ then yields a map 
$$\oh_{K, \mathfrak{p}}\langle\tilde{Y}_1, \dots, \tilde{Y}_{n} \rangle \lra \oh_{K, \mathfrak{p}}\langle\tilde{X}_1, \dots, \tilde{X}_m \rangle.$$
This is described by specifying $n$-tuple of elements of $\oh_{K, \mathfrak{p}}\langle\tilde{X}_1, \dots, \tilde{X}_m \rangle$ as images of the variables $\tilde{Y}_i$.  Since the map $f'$ is obtained from the above map of rings by applying the functor $\mathrm{Hom}_{\mathrm{Alg}_{\oh_{K, \mathfrak{p}}}}(-, \oh_{K, \mathfrak{p}})$, it follows that $f'$ is described by these power series. This proves the claim. 
\end{proof}

\begin{remark}\label{rem:ParametrizeCoefficients}
It will be useful later to note that $\oh_{X, x}$ naturally embeds into $\widehat{\oh(\mathrm{Bl}_{x}^{(\pi)} X)}$. As in the previous proof, let us replace $X$ by a sufficiently small affine neighbourhood of $x$. The maximal ideal of $\oh_X(X)$ corresponding to $x$ becomes $(\pi)$ in $\oh(\mathrm{Bl}_{x}^{(\pi)} X)$, and is therefore mapped to the radical in $\widehat{\oh(\mathrm{Bl}_{x}^{(\pi)} X)}$. 
There is thus an induced map 
$\oh_{X, x} \lra \widehat{\oh(\mathrm{Bl}_{x}^{(\pi)} X)}.$ Concerning the injectivity, after taking completions at the maximal ideal, the map becomes the map 
$$\oh[[X_1, \dots, X_m]]\hookrightarrow \oh[[\tilde{X}_1, \dots, \tilde{X}_m]]$$ given by $X_i \mapsto p \tilde{X}_i$, which is injective.
\end{remark}



\begin{remark}\label{FunctionRestriction}
It will be beneficial to replace the power series expressions with $\oh_{K, \mathfrak{p}}$-coefficients by convergent power series with $\Z_p$-coefficients. To that end, we let 
$$k=ef=\mathrm{rank}_{\Z_p}\oh_{K, \mathfrak{p}},$$ following our earlier convention, and fix a free basis $e_1, e_2, \dots, e_k$ of $\oh_{K,\mathfrak{p}}$ as a $\Z_p$-module. Expressing everything with respect to this basis, the description of maps $\oh_{K,\mathfrak{p}}^{m}\lra \oh_{K,\mathfrak{p}}^{n}$ in terms of power series gives rise to a power series description of maps $\Z_p^{km}\lra \Z_p^{kn}$. More precisely, upon the introduction of formal variables $X_{i,j}$ by the rule
\begin{equation}\label{VariableRestriction}
X_i=X_{i,1}e_1+X_{i,2}e_2+\dots+X_{i,k}e_k,
\end{equation} 
any convergent power series $f \in \oh_{K,\mathfrak{p}}\langle X_1, X_2, \dots X_m \rangle$ can be written as 
$$f=f_1e_1+f_2e_2+\dots +f_ke_k,$$
for a unique $k$-tuple of power series $f_1, f_2, \dots, f_k \in \Z_p\langle X_{i,j}\;|\; 1\leq i \leq m, 1\leq j \leq k\rangle$.
\end{remark}

\begin{remark}\label{rem:InjectiveTangent}
Keeping the notation from the proof of Proposition~\ref{prop:blowup}, the map 
$$\tilde{f}_{\F_{\mathfrak{p}}}: (\mathrm{Bl}_x^{(\pi)} X)_{\F_{\mathfrak{p}}} \lra (\mathrm{Bl}_{f(x)}^{(\pi)} Y)_{\F_{\mathfrak{p}}}$$ can be identified with the tangent map of $f_{\F_{\mathfrak{p}}}:X_{\F_{\mathfrak{p}}} \lra Y_{\F_{\mathfrak{p}}}$ at $x$. Assume that this map is injective. By a lift of a suitable $\F_{\mathfrak{p}}$-affine change of coordinates on $(\mathrm{Bl}_{f(x)}^{(\pi)} Y)_{\F_{\mathfrak{p}}}$, one can ensure that the map 
$(f')^{\#}:\oh_{K, \mathfrak{p}}\langle\tilde{Y}_1, \dots, \tilde{Y}_n\rangle\lra \oh_{K, \mathfrak{p}}\langle\tilde{X}_1, \dots, \tilde{X}_m\rangle$ is given by $\tilde{Y}_i \mapsto \tilde{X}_i$ for $i\leq m$ and by $\tilde{Y}_i \mapsto 0$ for $i>m$. In other words, the parameters $s_i$ and $t_i$ may be chosen so that $f^{\#}(s_i)=t_i$ for $i\leq m$ and $s_{m+1}, \dots s_n$ generate the kernel of the map $f^{\#}: \oh_{Y, f(x)}\lra \oh_{X, x}$. In that case, $X(\oh_{K, \mathfrak{p}})_{x}$ is embedded in $Y(\oh_{K, \mathfrak{p}})_{f(x)}$ and, in the chosen coordinates, equal to the vanishing locus of $\tilde{Y}_{m+1}, \dots, \tilde{Y}_{n}$. As in Remark~\ref{FunctionRestriction}, we can identify the embedding  with the affine embedding $\Z_p^{km}\lra \Z_p^{kn}$, whose image is cut out by the $k(n-m)$ variables $\tilde{Y}_{i, j},$ with $m < i \leq n$ and $1\leq j \leq k$. 
\end{remark}

\subsubsection{The exp-log argument} 

We now focus on the special case where $Y \lra \spec\oh_{K, \mathfrak{p}}$ is a smooth scheme of relative dimension $n$, and  $X=G$ is a smooth commutative group scheme over $Y$ of relative dimension $m$. Beware that $m$ from the previous discussion corresponds to $m+n$ in the situation at hand. Hopefully, this will not cause too much confusion. Let $e: Y \lra G$ denote the identity section. We consider a point $y \in Y(\F_{\mathfrak{p}})$ and the map $G(\oh_{K, \mathfrak{p}})_{e(y)}\lra Y(\oh_{K, \mathfrak{p}})_{y}$.

As done previously, we may replace $Y$ by $\spec{\oh_{Y, y}}$ and $G$ by $G_{\oh_{Y, y}}$. We fix a system of parameters $\pi, s_1, s_2, \dots, s_n$, which induce a bijection $\tilde{s}: Y(\oh_{K, \mathfrak{p}})_{y}\stackrel{\sim}{\lra} \oh_{K, \mathfrak{p}}^n$.

By \cite[05D9]{stacks}, there is an affine open neighborhood $\spec{B}=U \subseteq G_{\oh_{Y, y}}$ of $e(y)$ such that $e$ factors through $U$ and such that, denoting by $I$ the kernel of the associated map $e^{\#}: B \lra \oh_{Y, y}$, $I/I^2$ is a free $\oh_{Y, y}$-module of rank $m$. Upon fixing elements $t_1, t_2, \dots, t_m \in I$ that form a free basis of $I/I^2$, the sequence $\pi, s_1, s_2, \dots, s_n, t_1, t_2, \dots, t_m$ forms a system of parameters of $G_{\oh_{Y, y}}$ at $e(y)$. This establishes a bijection $(\tilde{s}, \tilde{t}): G(\oh_{K, \mathfrak{p}})_{e(y)} \stackrel{\sim}{\lra} \oh_{K, \mathfrak{p}}^{n+m}.$ 

We consider the formal $\oh_{Y, y}$-group $\widehat{G_{\oh_{Y, y}}}$, i.e., the completion of $G_{\oh_{Y, y}}$ with respect to the ideal of the identity section. In terms of the chosen coordinates, it is the formal spectrum of the $I$-adic completion of $B$, which in turn is the formal power series ring $\oh_{Y, y}[[t_1, t_2, \dots, t_m]]$. The group operation then induces a $m$-dimensional commutative formal group law $$F_G(\underline{U}, \underline{V})=(F_1, \dots, F_m)(U_1, \dots, U_m, V_1, \dots, V_m)$$ over $\oh_{Y, y},$ in the sense of \cite{honda}.  By \cite[Theorem 1]{honda}, over $\oh_{Y, y}\otimes \Q,$ there are mutually inverse isomorphisms of formal group laws 
\begin{center}
\begin{tikzcd}
 F_{G, \Q} \ar[r, shift left, "\mathrm{log}"] & \ar[l, shift left, "\mathrm{exp}"] (\widehat{\mathbb{G}}_a)_{\Q}^m, 
\end{tikzcd}  
\end{center}
where $(\widehat{\mathbb{G}}_a)^m$ denotes the $m$-dimensional addition law given by the polynomials $U_i+V_i$ treated as power series over $\oh_{Y, y}$. The subscript $\Q$ denotes the ``formal base change'' to $\Q$. Explicitly, fixing a basis of invariant differentials $\omega_1, \dots, \omega_m \in \bigoplus_{i=1}^m \oh_{Y, y}[[t_1, \dots, t_m]] \mathrm{d}t_i$ of $F_G$ in the sense of \cite[Proposition 1.1]{honda}, $\mathrm{log}$ is given by a $m$-tuple of formal power series $$L_1, L_2, \dots, L_m \in (\oh_{Y, y}\otimes\Q)[[t_1, \dots, t_m]]$$ characterized by the properties
\begin{equation}\label{logdef}
L_i(0, \dots, 0)=0,\;\; \mathrm{d} L_i=\omega_i,\;\;i=1, 2, \dots, m,
\end{equation}
and that, additionally, each $L_i$ equals $t_i$ in degree $1$.
The exponential is then given as a formal inverse to $\mathrm{log},$ i.e., by a $m$-tuple of power series $E_1, E_2, \dots, E_m \in (\oh_{Y, y}\otimes\Q)[[t_1, \dots, t_m]]$ characterized by the identities
\begin{equation}\label{expdef}
E_i(L_1, L_2, \dots, L_m)=t_i,\;\; i=1, 2, \dots, m.
\end{equation}
It follows that each $E_i$ equals $t_i$ in degrees $\leq 1$.

The fibers of the map $G(\oh_{K, \mathfrak{p}})_{e(y)}\lra Y(\oh_{K, \mathfrak{p}})_{y}$ naturally carry the structure of $\mathbb{Z}_p$-modules. In fact, given $\tilde{y}\in Y(\oh_{K, \mathfrak{p}})_{y},$ the fiber over $\tilde{y}$ is the kernel of the reduction map $G_{\tilde{y}}(\oh_{K, \mathfrak{p}})\lra G_{\tilde{y}}(\mathbb{F}_{\mathfrak{p}})$, where $G_{\tilde{y}}$ denotes the $\oh_{K, \mathfrak{p}}$-group scheme obtained from $G$ by base change along $\tilde{y}$. This kernel is the set of $\oh_{K, \mathfrak{p}}$-points of the associated formal group, $\widehat{G_{\tilde{y}}}(\oh_{K, \mathfrak{p}})=\varprojlim_{j}\widehat{G_{\tilde{y}}}(\oh_{K, \mathfrak{p}}/p^j\oh_{K, \mathfrak{p}})$, and the group law of $\widehat{G_{\tilde{y}}}$ may be viewed as the ``formal base change'' of the formal group law for $\widehat{G_{\oh_{Y, y}}}$ above. The fact that any formal group law is of the form $\underline{U}+\underline{V}+(\text{higher order terms})$ shows that $\widehat{G_{\tilde{y}}}(\oh_{K, \mathfrak{p}}/p^j\oh_{K, \mathfrak{p}})$ is an abelian group annihilated by $p^j$. This verifies the claim.

The goal is to $p$-adically interpolate the function $z \mapsto z\cdot g$ for $g \in G(\oh_{K, \mathfrak{p}})_{e(y)}$, or more precisely, to describe the action map $\Z_p\times G(\oh_{K, \mathfrak{p}})_{e(y)}\lra G(\oh_{K, \mathfrak{p}})_{e(y)}$ arising from the $\Z_p$-action on the fibers, in terms of convergent power series. This is done by interpreting the formal logarithm and exponential as convergent power series.

\begin{proposition}\label{logexp}
The formal logarithm and exponential induce the mutually inverse maps $\mathrm{log}$ and $\mathrm{exp}$
\begin{center}
\begin{tikzcd}
G(\oh_{K, \mathfrak{p}})_{e(y)} \ar[r, "{(\tilde{s}, \tilde{t})}", "\simeq"'] & (\oh_{K, \mathfrak{p}})^{n+m} \ar[r, shift left, "\mathrm{log}"] & \ar[l, shift left, "\mathrm{exp}"] (\oh_{K, \mathfrak{p}})^{n+m},
\end{tikzcd}  
\end{center} 
which are given by convergent power series, i.e., elements of $\oh_{K, \mathfrak{p}}\langle \tilde{Y}_1, \dots, \tilde{Y}_n,\tilde{X}_1, \dots, \tilde{X}_m \rangle$.
Given $z \in \Z_p$ and $g \in G(\oh_{K, \mathfrak{p}})_{e(y)}$, viewed as an element of $(\oh_{K, \mathfrak{p}})^{n+m}$ via $(\tilde{s}, \tilde{t})$, we have the equality $z \cdot g=\mathrm{exp}(z \cdot\mathrm{log}(g))$. Consequently, the action map $\mathbb{Z}_p \times G(\oh_{K, \mathfrak{p}})_{e(y)} \lra G(\oh_{K, \mathfrak{p}})_{e(y)}$ is described by convergent power series with coefficients in $\Z_p$.
\end{proposition}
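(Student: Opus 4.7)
The strategy is to show that the formal power series $L_i, E_i$ defining $\log$ and $\exp$ converge as $p$-adic analytic functions on the residue disk $G(\oh_{K,\mathfrak{p}})_{e(y)}$, to deduce from their formal inverseness that they are mutually inverse homeomorphisms, and then to use the fact that $\log$ intertwines the formal group law $F_G$ with the additive formal group law to reduce the computation of $z \cdot g$ to additive scalar multiplication, where the extension from $z \in \Z$ to $z \in \Z_p$ is automatic.

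The technical core is convergence. By \eqref{logdef} and \eqref{expdef}, I would write
\[
L_i = t_i + \sum_{|I|\ge 2} \frac{a_{i,I}}{n(I)}\, t^I, \qquad E_i = t_i + \sum_{|I|\ge 2} \frac{b_{i,I}}{d(I)}\, t^I,
\]
with $a_{i,I}, b_{i,I} \in \oh_{Y,y}$ and integer denominators satisfying $n(I) \mid |I|$ (from integrating $\omega_i = dt_i + \cdots$ termwise) and $d(I) \mid |I|!$ (from the recursive inversion in \eqref{expdef}). Using Remark \ref{rem:ParametrizeCoefficients}, I would expand each $\oh_{Y,y}$-coefficient as a convergent power series in $\tilde{Y}_1, \ldots, \tilde{Y}_n$ with $\oh_{K,\mathfrak{p}}$-coefficients (via $s_j = \pi \tilde{Y}_j$), substitute $t_j = \pi \tilde{X}_j$ on the input side, and divide by $\pi$ on the output side. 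The coefficient of $\tilde{X}^I$ in $L_i/\pi$ then has $\mathfrak{p}$-adic valuation at least $|I|-1 - e\,v_p(|I|)$, which tends to $+\infty$ unconditionally, while the coefficient of $\tilde{X}^I$ in $E_i/\pi$ has $\mathfrak{p}$-adic valuation at least $(|I|-1)\bigl(1 - e/(p-1)\bigr)$, which tends to $+\infty$ precisely when $e := e(\mathfrak{p}/p) < p-1$, i.e.\ under Assumption \ref{passumption}(b). Hence $\log$ and $\exp$ belong to $\oh_{K,\mathfrak{p}}\langle \tilde{Y}_1, \ldots, \tilde{Y}_n, \tilde{X}_1, \ldots, \tilde{X}_m\rangle$, and are mutually inverse since their compositions equal the identity formally and all functions involved are continuous. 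The convergence of $\exp$ is the main obstacle and is exactly where the ramification hypothesis is used.

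For the interpolation identity $z \cdot g = \exp(z \log g)$, the formal homomorphism property of $\log$ from $F_G$ to the additive formal group yields $\log(z \cdot g) = z \log g$ for $z \in \Z$ by iteration of the group law; applying $\exp$ gives the formula for integer $z$. For general $z \in \Z_p$, both sides depend continuously on $z$ with $g$ fixed: the right-hand side because $\exp$ is analytic and $z \mapsto z \log g$ is $\Z_p$-linear, and the left-hand side because $z \mapsto z \cdot g$ is a group homomorphism from $\Z$ into the $p$-adically complete $\Z_p$-module kernel of reduction, hence extends uniquely and continuously to $\Z_p$. Density of $\Z$ in $\Z_p$ then yields the identity in general. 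Finally, applying the variable substitution of Remark \ref{FunctionRestriction} to the $\oh_{K,\mathfrak{p}}$-coordinates on both source and target converts the map $(z, g) \mapsto \exp(z \log g)$, with the scalar variable $z$ already in $\Z_p$, into a tuple of convergent power series with coefficients in $\Z_p$, as required.
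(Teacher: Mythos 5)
Your proposal is correct and follows essentially the same route as the paper's proof: the integrality bounds $|J|\,a_{i,J}\in\oh_{Y,y}$ and $|J|!\,b_{i,J}\in\oh_{Y,y}$ for the formal logarithm and exponential (your $n(I)\mid|I|$, $d(I)\mid|I|!$ formulation is an equivalent restatement), the rescaling $t_j=\pi\tilde X_j$ with the valuation estimate $v_p(\pi^{k-1}/k!)\geq(k-1)(p-1-e)/(e(p-1))$ under Assumption \ref{passumption}(b), and the extension of $z\mapsto z\cdot g$ from $\Z$ to $\Z_p$ by continuity and density matched against the analytic expression $\exp(z\log g)$.
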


\begin{proof}
Write $L_i=\sum_{J \neq 0}a_{i, J}\underline{t}^J$ and $E_i=\sum_{J \neq 0}b_{i, J}\underline{t}^J$ for the formal power series, which are respectively the components of the formal logarithm and formal exponential. It can be deduced from the identity (\ref{logdef}) that
\begin{equation}\label{logcoeff}
|J|a_{i, J} \in \oh_{Y, y}, \qquad \text{ for all }J,
\end{equation}
and a formal computation of the exponential based on the identities (\ref{expdef}) as in \cite[A.4.6]{hazewinkel}, together with (\ref{logcoeff}), shows that 
\begin{equation}\label{expcoeff}
(|J|!)b_{i, J} \in \oh_{Y, y}, \qquad \text{ for all }J.
\end{equation}
The induced map $\mathrm{log}: \oh_{K, \mathfrak{p}}^{n+m}\lra \oh_{K, \mathfrak{p}}^{n+m}$ is then given by the identity on the first $n$ components, which correspond to the base $Y(\oh_{K, \mathfrak{p}})_y$, and by the power series 
\begin{equation}\label{LogOnDisks}
\tilde{L}_i(\underline{\tilde{X}})=\pi^{-1} \sum_{J \neq 0}a_{i, J}(\pi \underline{\tilde{X}})^J=\sum_{J \neq 0}\frac{\pi^{|J|-1}}{|J|}(|J|a_{i, J})(\underline{\tilde{X}})^J, \qquad i=1, \dots, m
\end{equation}
on the remaining components. Here $|J|a_{i, J}$ is considered as an element of $\oh_{K, \mathfrak{p}}\langle \tilde{Y}_1, \dots, \tilde{Y}_m\rangle$ in the sense of Remark~\ref{rem:ParametrizeCoefficients}.

Its formal inverse is given by the analogous modification of the formal exponential. Namely, $\mathrm{exp}: \oh_{K, \mathfrak{p}}^{n+d}\lra \oh_{K, \mathfrak{p}}^{n+d}$ is given by the identity on the first $n$ components and on the remaining $m$ components by the formal power series
\begin{equation}\label{ExpOnDisks}
\tilde{E}_i(\underline{\tilde{X}})=\pi^{-1} \sum_{J \neq 0}b_{i, J}(\pi \underline{\tilde{X}})^J=\sum_{J \neq 0}\frac{\pi^{|J|-1}}{|J|!}((|J|!)b_{i, J})(\underline{\tilde{X}})^J, \qquad i=1, \dots, m,
\end{equation}
where $(|J|!)b_{i, J}$ is again considered as an element of $\oh_{K, \mathfrak{p}}\langle \tilde{Y}_1. \dots, \tilde{Y}_m \rangle$.

To conclude that the power series (\ref{LogOnDisks}) and (\ref{ExpOnDisks}) define elements of the ring $\oh_{K, \mathfrak{p}}\langle \underline{\tilde{Y}}, \underline{\tilde{X}}\rangle$, it is enough to observe that the coefficients $\pi^{|J|-1}/(|J|!)$, and thereby also $\pi^{|J|-1}/|J|$, are integral and converge to zero $p$-adically as $|J|\rightarrow \infty$. These properties hold thanks to the imposed condition $e < p-1$ in Assumption \ref{passumption} $(b)$ on the ramification index, since the $p$-adic valuations satisfy
$$v_p\left(\frac{\pi^{k-1}}{k!}\right)\geq \frac{k-1}{e}-\frac{k-1}{p-1}=\frac{(k-1)(p-1-e)}{e(p-1)},$$
which is then non-negative for all $k\geq 1$ and tends to $\infty$ as $k \rightarrow \infty$. 

Finally, we may interpret $\mathrm{log}$ and $\mathrm{exp}$ as given by $ef(n+m)$ power series with coefficients in $\Z_p$ following Remark~\ref{FunctionRestriction}. The action map $\Z_p \times G(\oh_{K, \mathfrak{p}})_{e(y)} \lra G(\oh_{K, \mathfrak{p}})_{e(y)}$ then becomes a $p$-adically continuous map $\Z_p \times \Z_p^{ef(n+m)}\lra \Z_p^{ef(n+m)}$, which extends the map $$(z, g)\mapsto z\cdot g=\mathrm{exp}(z \cdot \mathrm{log}(g))$$ from $\Z \times \Z_{p}^{ef(n+m)}$ to $\Z_p \times \Z_{p}^{ef(n+m)}$. The same is true concerning the map on $\Z_p \times \Z_{p}^{ef(n+m)}$ given by $(z, g)\mapsto \mathrm{exp}(z \cdot \mathrm{log}(g))$, and it follows that these two maps agree. In particular, the $\Z_p$-action map is described by convergent power series with $\Z_p$-coefficients as claimed.
\end{proof}

\subsubsection{Proof of Proposition \ref{cor:AllPrimesInterpolation}}

\begin{proof}
As in (\ref{OKp-OKP-points}), a point $x \in X(\overline{\oh_{K,p}})$ is given by a $s$-tuple $x_1 \in X(\F_{\mathfrak{p}_1}), \dots, x_s \in X(\F_{\mathfrak{p}_s}),$ and we have $X(\oh_{K,p})_x=\prod_{i=1}^sX(\oh_{K,\mathfrak{p}_i})_{x_i}$. Similarly, for any map $f: X \lra Y$ of $\oh_{K}$-schemes, the induced map $f: X(\oh_{K, p})_x \lra Y(\oh_{K, p})_{f(x)}$ decomposes into the product over $i$ of the maps $f: X(\oh_{K, \mathfrak{p}})_{x_i} \lra Y(\oh_{K, \mathfrak{p}})_{f(x_i)}$. Part (1) thus follows from Proposition~\ref{prop:blowup} and Remark~\ref{FunctionRestriction}.

Likewise, we have $G(\oh_{K,p})_{e(y)}=\prod_{i=1}^sG(\oh_{K,\mathfrak{p}_i})_{e(y_i)},$ and thus $G(\oh_{K,p})_{e(y)}$ has a $\Z_p$-module structure on fibers over $Y(\oh_{K,p})_y=\prod_{i=1}^sY(\oh_{K,\mathfrak{p}_i})_{y_i}.$ By Proposition~\ref{logexp}, each of the action maps $\mathbb{Z}_p\times G(\oh_{K,\mathfrak{p}_i})_{e(y_i)}\lra G(\oh_{K,\mathfrak{p}_i})_{e(y_i)}$ is given by convergent power series with $\Z_p$-coefficients. The action map for $G(\oh_{K,p})_{e(y)}$ is then obtained by taking the product of the above action maps and precomposing with $\Z_p \times G(\oh_{K,p})_{e(y)}\lra \prod_i (\Z_p \times  G(\oh_{K,\mathfrak{p}_i})_{e(y_i)})$, where $\Z_p$ is embedded into the $s$ copies of $\Z_p$ diagonally. It follows that the map has a description in terms of convergent power series over $\Z_p$ as well, which proves (2).
\end{proof}


\section{Bounding the number of rational points}\label{s:end}



We are now in a position to prove Theorem \ref{thm:main_rough_form} of Section \ref{s:method}, which gives a conditional upper bound on the size of the intersection $\U(\oh_{K, p})_u\cap \Y_t$.

The tangent map of the lifted Abel--Jacobi map $\tilde{j}_b^U : \U(\oh_{K,\fp})_{u_\fp}\hookrightarrow \TT(\oh_{K, \fp})_{t_\fp}$ of Proposition \ref{prop:Ulift} is injective at $\fp$ by smoothness (Assumption \ref{passumption} (a)). It follows, by Remark \ref{rem:InjectiveTangent}, that $\U(\oh_{K, \fp})_{u_\fp}$ is a complete intersection in $\TT(\oh_{K,\mathfrak{p}})_{t_{\mathfrak{p}}}$, i.e., it is cut out by $g+\rho-2$ elements 
\[
f_1^{\fp}, \ldots, f_{g+\rho-2}^{\fp} \:\: \in \:\: \widehat{\oh(\Bl_{t_\fp}^{(\pi_{\fp})}(\TT))},
\]
which generate the kernel of the surjection 
$$(\tilde{j}^U_b)^{\#}_{\fp} : \widehat{\oh(\Bl_{t_\fp}^{(\pi_{\fp})}(\TT))}\lra \widehat{\oh(\Bl_{u_\fp}^{(\pi_{\fp})}(\U))}.$$
After restricting scalars from $\oh_{K,\fp}$ to $\Z_p$ (see Remark \ref{FunctionRestriction}), each $f_i^{\fp}$ corresponds uniquely to a $k_{\fp}$-tuple $(f_{i,1}^\fp, \ldots, f_{i,k_\fp}^\fp)$ of power series in $(g+\rho-1)k_\fp$ variables with coefficients in $\Z_p$. 

In conclusion, the $p$-adic analytic submanifold $\U(\oh_{K, p})_u$ of $\TT(\oh_{K, p})_t$ is cut out by $(g+\rho-2)\sum_{\fp\vert p} k_{\fp}=(g+\rho-2)d$ convergent power series with coefficients in $\Z_p$. 
The computation of the desired intersection is accomplished by pulling all equations back via the map of $p$-adic analytic manifolds $\kappa=\kappa_{\mathbf{x}, \mathbf{u}}: \Z_p^{\delta(\rho-1)+r}\lra \TT(\oh_{K,p})_t$ of Theorem \ref{thm:pinterpolation} whose image is $\Y_t$ by Corollary \ref{coro:imagekappa}.

\begin{definition}\label{AI}
The elements $\kappa^* f_{i, j}^{\fp}$, with $1\leq i\leq g+\rho-2$, $1\leq j\leq k_{\fp}$, and $\fp\vert p$, all lie in $R=\Z_p\langle z_1, \ldots, z_{\delta(\rho-1)+r} \rangle$. Let $I_{\U, u}:=I_{\U, u,\kappa}$ denote the ideal of $R$ generated by these elements, and denote by $A_{\U, u}:=R /I_{\U, u}$ the resulting quotient ring.
\end{definition}

The intersection is algebraically expressed as the tensor product of rings, i.e., by taking the quotient by $I_{\U, u}$. It follows that there is a bijection 
\begin{equation}\label{HomA}
    \Hom(A_{\U, u}, \Z_p) \longleftrightarrow \kappa^{-1}(\U(\oh_{K, p})_u\cap \Y_t).
\end{equation}

Let $\overline{\kappa^* f_{i, j}^{\fp}}\in \F_p[z_1, \ldots, z_{\delta(\rho-1)+r}]$ denote the reductions of the convergent power series modulo $p$. These elements generate the ideal $\bar{I}_{\U, u}=I_{\U, u} \F_p[z_1, \ldots, z_{\delta(\rho-1)+r}]$. Consider the quotient $\F_p$-algebra
\[
\overline{A}_{\U, u}:=A_{\U, u}\otimes \F_p=\F_p[z_1, \ldots, z_{\delta(\rho-1)+r}]/\bar{I}_{\U, u}.
\]


We are now ready to prove Theorem \ref{thm:main_rough_form}, which we restate.\\

\noindent \textbf{Theorem} \ref{thm:main_rough_form}. 
{\it If $\overline{A}_{\U, u}$ is finite, then $\vert \U(\oh_K)_u\vert\leq \dim_{\F_p} \overline{A}_{\U, u}$.}

\begin{proof}
For the sake of notation, we drop the subscripts $(\U, u)$ in this proof.
The ring $A$ is $p$-adically complete by the same proof as of \cite[Theorem 4.12]{EL19}. Moreover, since $\overline{A}$ is finite, $A$ is finitely generated as a $\Z_p$-module. 
It follows that 
\[
\Hom(A, \Z_p)=\coprod_{\mathfrak{m}} \Hom(A_\mathfrak{m}, \Z_p)=\coprod_{A_\mathfrak{m}/\mathfrak{m}=\F_p} \Hom(A_\mathfrak{m}, \Z_p),
\]
where the union is over the maximal ideals of $A$.
This gives the bound 
\[
\vert \Hom(A, \Z_p)\vert \leq \sum_{A_\mathfrak{m}/\mathfrak{m}=\F_p} \rank_{\Z_p} A_\mathfrak{m}=\sum_{A_\mathfrak{m}/\mathfrak{m}=\F_p} \dim_{\F_p} \overline{A}_\mathfrak{m}\leq \dim_{\F_p} \overline{A}.
\]
This establishes, by \eqref{HomA}, that the number of points in $\kappa^{-1} (\U(\oh_{K, p})_u \cap \Y_t )$ is bounded by $\dim_{\F_p} \overline{A}$, and we thus have 
\[
\vert \U(\oh_K)_u \vert\leq \vert \kappa^{-1} (\U(\oh_{K, p})_u \cap \Y_t )\cap \overline{T(\oh_K)_t}^p)\vert \leq \dim_{\F_p} \overline{A}.
\]
\end{proof}

\begin{remark}
The geometric quadratic Chabauty condition is implicit in the assumption of Theorem \ref{thm:main_rough_form}. Indeed, in order for the ring 
$
\overline{A}_{\U, u}=\F_p[z_1, \ldots, z_{\delta(\rho-1)+r}]/\langle \overline{\kappa^* f_{i, j}^{\fp}} \rangle_{i, j, \fp}
$
to have a chance to be finite, the number of relations we quotient by must be at least the number of variables. We therefore need $\delta(\rho-1)+r\leq (g+\rho-2)d$, which is equivalent to \eqref{eq:chabbound}.
\end{remark}

\begin{corollary}\label{cor:GQCNF}
Suppose that $\overline{A}_{\U, u}$ is finite for all $\U$ as in Definition \ref{cons:U} and all $u\in \U(\overline{\oh_{K,p}})$. Then the set of rational points $C_K(K)$ is finite and satisfies
\[
\vert C_K(K) \vert \leq \sum_\U \sum_{u\in \U(\overline{\oh_{K,p}})} \dim_{\F_p} \overline{A}_{\U, u}.
\]
\end{corollary}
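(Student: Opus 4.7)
The plan is to reduce the bound on $|C_K(K)|$ to a finite sum of bounds of the form provided by Theorem \ref{thm:main_rough_form}, one for each pair $(\U, u)$. Since the hard analytic work (construction of $\kappa$, complete intersection structure on the residue disks, and the dimension count in the quotient algebra) has already been absorbed into Theorem \ref{thm:main_rough_form}, the corollary should follow from a purely bookkeeping argument.

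First I would write $C_K(K) = \CC(\oh_K) = \CC^{\sm}(\oh_K)$, using properness of $\CC$ together with the fact that $\CC$ is regular (so rational points cannot reduce into the non-smooth locus, as recalled at the beginning of Section \ref{s:spread}). Next, by Remark \ref{remark:U}, the set $\CC^{\sm}(\oh_K)$ is the disjoint union of $\U(\oh_K)$ as $\U$ ranges over the finitely many open subschemes described in Definition \ref{cons:U}; here the key input is the lemma after Remark \ref{remark:U}, which guarantees that a smooth rational point of $\CC_{\F_\fq}$ lies on a unique geometrically irreducible component, so every integral point is captured by exactly one $\U$.

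Second, for a fixed $\U$, I would partition $\U(\oh_K)$ into residue disks $\U(\oh_K)_u$ indexed by $u \in \U(\overline{\oh_{K,p}})$. Using Notation \ref{OKp-OKP-points}, one has $\U(\overline{\oh_{K,p}}) = \prod_{i=1}^s \U(\F_{\fp_i})$, and each factor is finite because $\U_{\F_{\fp_i}}$ is an open subscheme of the proper curve $\CC_{\F_{\fp_i}}$ over the finite field $\F_{\fp_i}$. Hence the index set of the double sum is finite.

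Finally, for each such pair $(\U, u)$ I would apply Theorem \ref{thm:main_rough_form}, which by hypothesis gives
\[
|\U(\oh_K)_u| \le \dim_{\F_p} \overline{A}_{\U,u} < \infty.
\]
Summing over the finitely many $u \in \U(\overline{\oh_{K,p}})$ and the finitely many admissible $\U$ yields the desired bound
\[
|C_K(K)| = \sum_\U \sum_{u \in \U(\overline{\oh_{K,p}})} |\U(\oh_K)_u| \le \sum_\U \sum_{u \in \U(\overline{\oh_{K,p}})} \dim_{\F_p} \overline{A}_{\U,u},
\]
and in particular $C_K(K)$ is finite. There is no genuine obstacle here; the only point worth being explicit about is that the two sums are indexed by finite sets, and that the choices (basis $\mathbf{x}$ of $\J(\oh_K)_0$, basis $\mathbf{u}$ of $\G_m^{\rho-1}(\oh_K)_{\tf}$, initial lifts $P_{i,j}, R_i, S_j, V_{k,l}, W_{k,l,i}$, and local parameters $\mathbf{t}_\fp$) are made independently for each pair $(\U, u)$ before invoking Theorem \ref{thm:main_rough_form}.
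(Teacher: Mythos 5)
Your proof is correct and follows essentially the same route as the paper's: decompose $C_K(K) = \CC^{\sm}(\oh_K)$ into the finitely many $\U(\oh_K)$, refine each into residue disks, and sum the bounds from Theorem~\ref{thm:main_rough_form}. The extra remarks you add—finiteness of $\U(\overline{\oh_{K,p}})$ and independence of the auxiliary choices per pair $(\U,u)$—are harmless elaborations of the same argument.
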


\begin{proof}
There are finitely many $\U\subset \CC^{\sm}$ satisfying the conditions of Definition \ref{cons:U} and the union of the $\U(\oh_{K})$ covers $\CC^{\sm}(\oh_K)$, which is equal to $C_K(K)$ by properness and regularity of the model $\CC$. Moreover, each $\U(\oh_{K})$ is the disjoint union of its residue disks $\U(\oh_{K})_u$. The result follows.
\end{proof}

\section{Discussion and questions}\label{s:question}

\subsection{Finiteness of intersections}\label{ss:inter}

A more precise form of Question \ref{q1} from the introduction is the following:
\begin{question}\label{q3}
Given a subscheme $\U$ as in Definition \ref{cons:U} and $u\in \U(\overline{\oh_{K,p}})$ mapping to the point $\tilde{j}_b^U(u)=t\in \TT(\overline{\oh_{K,p}})$, what conditions would guarantee the finiteness of the intersection $\Y_t \cap \U(\oh_{K, p})_u$? 
\end{question} 

In \cite[\S 9]{EL19}, Edixhoven and Lido give a new proof of Faltings' theorem using their method in the case of higher genus curves defined over $\Q$ satisfying $r<g+\rho-1$. Their argument is quite elegant: it uses complex analytic methods to prove a Zariski density statement, which can then be bridged with their $p$-adic geometric situation using formal geometry. This proves the finiteness of the intersection $\Y_t \cap \U(\Z_p)_u$ and, in particular, the finiteness of $C_{\Q}(\Q)$. 

The setting over arbitrary number fields is more complicated. Reminiscent of the failures of Siksek's method described in Section \ref{s:intro:RoSC}, there are examples of curves satisfying \eqref{eq:chabbound} for which the  intersection $\Y_t \cap \U(\oh_{K, p})_u$ is not finite. Examples include curves base changed from $\Q$, which do not satisfy the quadratic Chabauty condition over $\Q$. Based on the results of Dogra \cite{Dogra19} (discussed in Section \ref{s:intro:RoSQC}), it seems reasonable to expect (although we have no proof) that the intersection is finite under the conditions \eqref{eq:chabbound} and 
\begin{equation}\label{dogra}
\Hom(J_{\bar{\Q}, \sigma_1}, J_{\bar{\Q}, \sigma_2})=0  \text{ for any two distinct embeddings } \sigma_1, \sigma_2 : K\hookrightarrow \bar{\Q}.
\end{equation}
It should be added that even if the condition \eqref{eq:chabbound} fails, we do expect a similar outcome as long as $\dim \Y_t < (g+\rho-2)d$ holds. 

A natural further question is to find a necessary and sufficient condition for the finiteness of the intersection. At this stage, we have no insights to offer in this direction.

\subsection{A finite-to-one condition}


Assume the finiteness of the intersections $\Y_t \cap \U(\oh_{K, p})_u$ (for the open subschemes $\U$ of Definition \ref{cons:U} and each point $u \in \U (\overline{\oh_{K, p}})$) discussed in Section \ref{ss:inter}. In order to extract an explicit bound for $\vert C_{K}(K)\vert$, the method of this paper relies on the existence of a prime $p$ with the property that the $\F_p$-algebras $\overline A_{\U, u}$ of Definition \ref{AI} are finite dimensional. Over $\Q$, assuming $r \leq g+\rho-2,$ Edixhoven and Lido hope (but also expect) \cite[Remark 4.13]{EL19} that it is always possible in practice to choose $p$ with this property. The purpose of this section is to discuss what conditions would guarantee the existence of such a prime. 

Fix an open subscheme $\U$ and a point $u \in \U (\overline{\oh_{K, p}})$. Given the definition of the algebra $A_{\U,u}$, it is natural to tackle the question of $\overline A_{\U, u}$ being finite or not by investigating the closely related question of when the map $\kappa=\kappa_{\U,u} : \Z_p^{r+\delta(\rho-1)}\twoheadrightarrow \Y_t$ of Theorem \ref{thm:pinterpolation} is expected to have finite fibers.  Throughout the discussion, we assume the condition \eqref{eq:chabbound}. 

Let $d_J:=\dim \overline{\J(\oh_K)_x}^p$ and $d_T:=\dim \overline{\TT(\oh_K)_t}^p$ be the dimensions as $p$-adic analytic manifolds. Similarly, let $d_{\oh_K^{\times}}$ denote the $\Z_p$-module rank of the $p$--adic closure of the image of $\G_m(\oh_K)_1$ (where the subscript $1$ indicates principal units, i.e., taking the residue disk of $1 \in \oh_K\otimes \F_p$) embedded diagonally in $\G_m(\oh_{K, p})_1=\prod_{\mathfrak{p}\mid p} \G_m(\oh_{K,\mathfrak{p}})_1$. The following inequalities of dimensions hold: 
\begin{itemize}
    \item $d_{\oh_K^{\times}}\leq \delta,$ \hfill (an equality assuming Leopoldt's conjecture \cite{leopoldt} for $K$ and $p$)
    \item $d_J\leq \min(r, gd)$, 
    \item $d_T\leq \min(r+\delta(\rho-1), (g+\rho-1)d)$, 
    \item $d_J+d_{\oh_K^{\times}}(\rho-1)\leq d_T$.
\end{itemize}

In order for $\kappa$ to be finite-to-one, the equality of dimensions $d_T=r+\delta(\rho-1)$ is needed. Hence, under our current assumptions, we expect the geometric quadratic Chabauty method over number fields to produce an effective bound on the size of the set of rational points when $\rho\geq 2$ and 
\begin{equation}\label{condK}
 r+\delta(\rho-1)= d_T \le (g+\rho-2)d.
\end{equation}
This condition should perhaps aptly be called the {\it effective geometric quadratic Chabauty condition} in contrast to the weaker condition \eqref{eq:chabbound}.

It is then natural to ask the following:
\begin{question}\label{qGuidoNrFields}
Is it possible that $d_J+d_{\oh_K^{\times}}(\rho-1)<d_T$?
\end{question}

\begin{remark}
We currently have no definite answer to offer. However,
using the fact that the torsor $\mathbf{T}$ is set up in a manner such that $t^*\mathbf{T}$ is trivial over $\spec{\oh_K}$ for every $t \in \mathbf{J}(\oh_K)$, it can be shown that there exists a Zariski open cover $\mathbf{J}=\bigcup_i\mathbf{V}_i$ which is trivializing for $\mathbf{T}$ and such that $\mathbf{J}(\oh_K)=\bigcup_i\mathbf{V}_i(\oh_K)$ (i.e., every $\oh_K$--point factors through the open subscheme $\mathbf{V}_i$ for some $i$). If such a cover can additionally be taken to be finite, then the equality $d_T=d_J+d_{\oh_{K}^\times}(\rho-1)$ follows. Nonetheless, such finiteness appears (at least to us) to be a non-trivial condition on the torsor. \end{remark}

If the answer to the above question is negative, implying that $d_T=d_J+d_{\oh_K^{\times}}(\rho-1),$ then the condition $d_T=r+\delta(\rho-1)$ in \eqref{condK} would force $d_J=r$ and $d_{\oh_K^{\times}}=\delta$. In particular, we must have 
\begin{equation}\label{rANDgd}
r\le gd.
\end{equation}
In this case, the geometric quadratic Chabauty method is expected to yield an effective bound under the condition
\begin{equation}\label{chK}
    d_J+d_{\oh_K^{\times}}(\rho-1)=r+\delta(\rho-1)\le (g+\rho-2)d.
\end{equation}

\begin{remark}
Of course, within \eqref{rANDgd}, the most interesting case is the range
\begin{equation}\label{eq:resr}
    (g-1)d<r \leq gd,
\end{equation}
since for $r \leq (g-1)d $ Siksek's RoS linear Chabauty can be applied (with finiteness of the Chabauty set guaranteed by the work of Dogra discussed in Section \ref{s:intro:RoSQC}, under the additional assumption \eqref{dogra}). The restriction \eqref{eq:resr} on $r$ also appears in \cite{BBBM19} as a consequence of Condition 4.1 therein. Specializing to $K=\mathbb{Q}$, the above inequalities reduce to the equality $r=g$, and the condition \eqref{chK} then implies that
$g=r<g+\rho-1.$
This coincides with the condition assumed in the setting of the effective quadratic Chabauty method of \cite{balakrishnan2019,BDMTV21}.
\end{remark}

Even when $r\leq gd$, failures of the equality $d_J=r$ can occur. Examples include cases when the Jacobian is isogenous to a product of abelian varieties of large and small Mordell--Weil ranks.
For instance, suppose $C_\Q$ has genus $2$ and $J_\Q$ is isogenous to the product of two elliptic curves $E$ and $A$ with $\rank_{\Z} E(\Q)=0$ and $\rank_\Z A(\Q)=2$. Then $\dim \overline{E(\Q)}^p=0$ while $\dim \overline{A(\Q)}^p= 1$ since $A(\Q_p)$ has dimension $1$. Thus, $d_J= 1<r=g=2$. In the presence of such isogeny factors, the condition \eqref{chK} fails (conditional on the answer to Question \ref{qGuidoNrFields} being negative) and $\kappa$ is not finite-to-one. In particular, $\overline{A}_{\U,u}$ is infinite. Note that this issue is independent of the chosen prime and persists when varying $p$, providing a conditional (on a negative answer to Question \ref{qGuidoNrFields}) counter-example to the existence of a prime $p$ such that $\overline{A}_{\U,u}$ is finite, even over $\Q$. 

\begin{remark}
The existence of such conditional counter-examples to the hopes and expectations expressed by Edixhoven and Lido might be seen as an indication in favour of Question \ref{qGuidoNrFields} having an affirmative answer. It is worth noting though that the example treated by Edixhoven and Lido in \cite[\S 8]{EL19} satisfies $r=g=\rho=2$ with Jacobian isogenous to the product of two rank $1$ elliptic curves, and thus falls outside the small and large isogeny factor cases discussed above.
\end{remark}

\begin{remark}
For a related discussion of isogeny factors in the linear geometric Chabauty method over $\Q$, see \cite[Remark 2.1]{spelierhashimoto}. However, since $r<g$ is assumed therein, it is still possible to perform a linear method when $d_J<r$.  
\end{remark}

In conclusion, it seems to be slightly non-trivial to come up with the right theoretical condition for the effectiveness of the method. In practice, we do hope and expect that the method can be used to compute rational points in interesting new instances (as illustrated over $\Q$ in \cite[\S 8]{EL19}).




\addtocontents{toc}{\protect\setcounter{tocdepth}{1}}
\subsection*{Acknowledgements}

This project originated during the Arizona Winter School (AWS) in March 2020 and was proposed to us by Bas Edixhoven. We wish to thank the organizers of the conference for making this collaboration possible. We are grateful to Bas Edixhoven and Guido Lido for offering their insights and answering many questions regarding their paper. We further thank Bas Edixhoven for initial comments and suggestions on this article. We thank Jan Vonk and Guido Lido, who acted as the research project assistants for our group during the AWS, for their generous help. We also thank David Corwin, Netan Dogra, Deepam Patel, Pim Spelier, and Nicholas Triantafillou for helpful discussions. 
We thank the anonymous referee for their valuable suggestions which greatly helped improve the quality of the exposition, and for encouraging us to think about the geometric method in greater generality.

\subsection*{Funding}
During the preparation of this work, P\v{C} was partially supported by the Ross Fellowship, the Bilsland Fellowship, as well as Graduate School Summer Research Grants of Purdue University. DTBGL was partially supported by an Alexis and Charles Pelletier Fellowship and a Scholarship for Outstanding PhD Candidates from the Institut des Sciences Math\'ematiques (ISM) while at McGill University, and by an Emily Erskine Endowment Fund Postdoctoral Research Fellowship at the Hebrew University of Jerusalem. 
LXX was supported by the David and Barbara Groce travel fund, ERC Advanced grant 742608 ``GeoLocLang'', UMR 7586 IMJ-PRG and CNRS. ZY was partially funded by the European Research Council (ERC) under the European Union's Horizon 2020 research and innovation program (grant agreement No.~851146).

\phantomsection
\bibliographystyle{plain}
\bibliography{GQCNF_AM}

\end{document}